\newtheorem*{remark}{Remark}
\newtheorem{proposition}{Proposition}
\newtheorem{lemma}{Lemma}
\newtheorem{theorem}{Theorem}
\DeclareMathAlphabet{\mathcalold}{OMS}{cmsy}{m}{n}
\DeclareMathAlphabet{\bmathcalold}{OMS}{cmsy}{b}{n}
\newcommand{\fracd}[2]{\displaystyle
{\frac{{\displaystyle{#1}}}{{\displaystyle{#2}}}}}
\newcommand{\rmd}{\mathrm{d}}
\newcommand{\pp}[2]{\fracd{\partial {#1}}{\partial {#2}}}
\newcommand{\dd}[2]{\fracd{\rmd {#1}}{\rmd {#2}}}
\newcommand{\vfl}{v}
\newcommand{\calH}{\mathcalold{H}}
\newcommand{\Qe}{Q_{\mathrm{e}}}
\newcommand{\Qf}{Q_{\mathrm{f}}}
\newcommand{\Qu}{Q_{\mathrm{u}}}
\newcommand{\Xmax}{X_{\mathrm{max}}}
\newcommand{\bC}{\boldsymbol{C}}
\newcommand{\bI}{\boldsymbol{I}}
\newcommand{\bJ}{\boldsymbol{J}}
\newcommand{\bPhi}{\boldsymbol{\Phi}}
\newcommand{\bR}{\boldsymbol{R}}
\newcommand{\bSf}{\boldsymbol{S}_{\mathrm{f}}}
\newcommand{\bS}{\boldsymbol{S}}
\newcommand{\bzero}{\boldsymbol{0}}
\newcommand{\jf}{j_{\mathrm{f}}}
\newcommand{\mumax}{\mu_{\mathrm{max}}}
\newcommand{\qjph}{q_{j+1/2}}
\newcommand{\vhs}{v_{\mathrm{hs}}}
\newcommand{\bCf}{\bC_{\rm f}}
\newcommand{\QCphm}{\vfl^{X,-}_{j+1/2}}
\newcommand{\QCphn}{\vfl^{X,n}_{j+1/2}}
\newcommand{\QCphp}{\vfl^{X,+}_{j+1/2}}
\newcommand{\QCmhnm}{\vfl^{X,n,-}_{j-1/2}}
\newcommand{\QCphnm}{\vfl^{X,n,-}_{j+1/2}}
\newcommand{\QCmhnp}{\vfl^{X,n,+}_{j-1/2}}
\newcommand{\QCphnp}{\vfl^{X,n,+}_{j+1/2}}
\DeclareMathAlphabet{\mathbcal}{OMS}{cmsy}{b}{n}
\DeclareMathAlphabet{\mathpzc}{OT1}{pzc}{b}{n}
\renewcommand\@biblabel[1]{}
\title[Reactive settling with varying cross-sectional area]{A method-of-lines formulation for a model of reactive settling in tanks with varying cross-sectional area}
\author[B\"urger]{Raimund B\"urger}
\address[A1]{\vspace{-0.1cm}CI${}^{\mathrm{2}}$MA and Departamento de Ingenier\'{\i}a Matem\'{a}tica, Facultad de Ciencia\'{i}sicas y Matem\'{a}ticas, Universidad de Concepci\'{o}n, Casilla 160-C, Concepci\'{o}n, Chile}
\author[Careaga]{\vspace{-0.5cm}Julio Careaga$^{*}$}
\address[A2]{\vspace{-0.1cm}Centre for Mathematical Sciences, Lund University, P.O.\ Box 118, S-221 00 Lund, Sweden}
\author[Diehl]{\vspace{-0.5cm}Stefan Diehl}
\address[A2]{\vspace{-0.3cm}Centre for Mathematical Sciences, Lund University, P.O.\ Box 118, S-221 00 Lund, Sweden\vspace{0.3cm}
 {\small $^*$Corresponding author: \textsf{julio.careaga@math.lth.se}}}
\begin{document}

\maketitle

\begin{abstract}
{Reactive settling denotes the  process of sedimentation of small solid particles dispersed 
 in a viscous fluid with simultaneous reactions between the  components that  constitute 
  the solid and liquid phases. This process is of particular importance for the simulation and 
   control of secondary settling tanks (SSTs) in water resource recovery facilities (WRRFs), formerly known as wastewater treatment plants. 
    A spatially one-dimensional model of reactive settling in an SST is formulated by combining a mechanistic model of sedimentation 
     with compression with a model of biokinetic reactions. In addition, the cross-sectional area of the tank 
      is allowed  to vary as a function of height. The final model is a system 
      of   strongly degenerate parabolic, nonlinear partial differential equations (PDEs) 
       that include discontinuous coefficients to describe the feed, underflow and overflow mechanisms, 
        as well as singular source terms that model the feed mechanism.  
         A  finite difference scheme for the final model is derived  by first deriving a 
          method-of-lines formulation (discrete in space, continuous in time), and then 
           passing to a fully discrete scheme by a time discretization. The advantage 
            of this formulation is its  compatibility with common practice 
             in development of software for WRRFs. 
               The main mathematical result is an invariant-region property, which implies that physically relevant numerical solutions are produced. Simulations of denitrification in SSTs in wastewater treatment illustrate the model and its discretization.} 
{Secondary settling tank,  multi-component flow,  wastewater treatment, 
 degenerate parabolic equation, method-of-lines formulation, finite-difference method.}
\\
2000 Math Subject Classification: 65M06, 35K57, 35Q35  
\end{abstract}

\section{Introduction}

\subsection{Scope}\label{sect:scope}

Reactive settling denotes the combined process of sedimentation of small solid particles, each consisting of several components, dispersed in a viscous fluid with simultaneous reactions between the solids and soluble components in the fluid.
This process 
is of particular importance in secondary settling tanks (SSTs)  within the activated sludge process in water resource recovery facilities (WRRFs), formerly known as wastewater treatment plants. 
   The primary purpose of an SST (see Figure~\ref{fig:vesselA}) is to allow the biomass (essentially, bacteria) to settle out from 
    the process effluent of a bioreactor. The overflow produced by the SST should ideally be water, while most of the sediment (activated sludge) leaves the unit through the underflow and is recycled to the bioreactor. On the other 
      hand, significant  biokinetic reactions are going on in an SST, in particular denitrification, which  is 
       the conversion  of nitrate ($\mathrm{NO}_3$) into nitrogen ($\mathrm{N}_2$).  
   An excessive production of nitrogen, however, may led to bubbles that cause  biomass particles 
    to float and leave the SST with the effluent; this situation is highly  undesirable. 
We referto \cite{metcalf} and \cite{droste} for the background of wastewater treatment. 
    Mathematical models able to capture reactive settling, that is 
     the mechanical sedimentation process in an SST in combination with biological reactions 
     are urgently needed to allow for the simulation of operational scenarios.

There are two purposes of this work.
One is to extend the model  of reactive settling by \cite{SDm2an_reactive} by  including 
  dispersion (mixing effects) and  tanks with  a varying cross-sectional area.
The other purpose is to advance  a new numerical scheme, which is the main contribution of this work.
The scheme is easy to implement and to include in commercial simulation softwares for ordinary differential equations (ODEs), which are frequently used for the simulation of biological reactions in WRRFs and require method-of-lines (MOL) form for the simulation of partial differential equations (PDEs).

In contrast to \cite{SDm2an_reactive}, 
    here the main system of PDEs is formulated 
     in terms  of the  concentrations of solid particles and soluble components as unknowns instead 
      of using  percentages. 
By including a cross-sectional-area function, the model 
can be seen as a quasi-one-dimensional approach that allows simulation in more realistic tanks. 
The governing model can be stated as the following system of convection-diffusion-reaction equations, 
where $z\in \mathbb{R}$ is depth and $t \geq 0$ is time: 
\begin{align} \label{eq:model} \begin{split} 
&A(z)\pp{\bC}{t} + \pp{}{z}\big(A(z) \mathcalold{F}_{\boldsymbol{C}} (z,t,X)  \bC\big)  = \pp{}{z}\left(A(z)\gamma(z)\pp{D_{\bC}(X)}{z} \boldsymbol{C} \right) + \bmathcalold{B}_{\bC}(\boldsymbol{C},\bS,z,t),  %\label{eq:modelC} 
\\
& A(z)\pp{\bS}{t} + \pp{}{z}\big( A(z) \mathcalold{F}_{\boldsymbol{S}} (z,t,X)  \bS\big)  =  \pp{}{z}\left(A(z)\gamma(z)\mathbcal{D}\pp{\bS}{z}\right)   + \bmathcalold{B}_{\bS}(\boldsymbol{C},\bS,z,t), 
\\
& \boldsymbol{C} = \bigl( C^{(1)}, \dots, C^{(k_{\boldsymbol{C}})} \bigr)^{\mathrm{T}}, \quad 
 \boldsymbol{S} = \bigl( S^{(1)}, \dots, S^{(k_{\boldsymbol{S}})} \bigr)^{\mathrm{T}}, \quad 
  X= X(z,t) = C^{(1)} (z,t) + \dots + C^{(k_{\boldsymbol{C}})} (z,t), 
\end{split} 
\end{align}
 The unknowns are the vectors of solid concentrations $\bC = \bC(z,t)$ and of concentrations of soluble components $\bS = \bS(z,t)$, and $X$~denotes  the total concentration of solids. The function~$A= A(z)$ is the (variable) cross-sectional area, and $\gamma$~is a characteristic function which   equals one  inside the vessel and zero otherwise. 
 The scalar functions $\mathcalold{F}_{\boldsymbol{C}}$ and $\mathcalold{F}_{\boldsymbol{S}}$ depend 
  discontinuously on~$z$ and non-linearly on~$X$ and represent portions of the solid and liquid phase velocity, respectively, 
   in different parts of the tank.  The scalar 
    function $D_{\bC}$ models sediment compressibility, and  $\bmathcalold{D}$ 
     is a diagonal matrix  of  diffusion coefficients for the equations of the soluble components. 
      The terms $\bmathcalold{B}_{\boldsymbol{C}}$ and $\bmathcalold{B}_{\boldsymbol{S}}$ involve the feed and reaction terms for the solids and soluble components, respectively. 
        All ingredients are specified in detail in Section~\ref{sec:two}. The model \eqref{eq:model} is
   supplied with  a suitable initial condition; no boundary conditions are required. 
   
The main difficulties for the mathematical and numerical treatment of \eqref{eq:model} arise partly from the discontinuous dependence of 
$\mathcalold{F}_{\boldsymbol{C}}$, $\mathcalold{F}_{\boldsymbol{S}}$, and the diffusion terms (via the presence of $\gamma(z)$) on spatial position~$z$, partly from the presence of singular source terms 
      (within $\bmathcalold{B}_{\boldsymbol{C}} $ and $\bmathcalold{B}_{\boldsymbol{S}}$), and partly from strong type degeneracy; the function $D_{\bC}$ is zero for $X$-values on an interval of positive length. 
       The background of these properties is  provided 
        in Section~\ref{sec:two}.  

We present a new numerical scheme for \eqref{eq:model} that  handles all these difficulties, and that produces 
 approximate solutions that satisfy  certain bounds under a convenient Courant-Friedrichs-Lewy (CFL) condition. In particular, the scheme 
  is positivity preserving.  
The numerical scheme by \cite{SDm2an_reactive} for an equivalent model, but with constant cross-sectional area and no dispersion effect, 
was based on solving within each 
    time step first the scalar PDE for the total solids concentration~$X$, and then inserting the result into the discretized 
    PDEs for the percentages of solid and liquid components. 
We denote that numerical method by Method~XP and the new method presented here by Method~CS, since it computes the concentrations $\bC$ the $\bS$ directly. 
Contrary to Method~XP, Method~CS is compatible with the practice of commercial WRRF simulation 
      software packages that are based on method-of-lines (spatially discretized, 
       continuous in time) formulations for all submodels defined in terms of spatio-temporal PDEs. 
        This principle of simulator design is useful, for example, when simulating a WRRF with biological reactors coupled with sedimentation tanks, for which the  entire model is then a system of coupled ordinary differential equations (ODEs) and PDEs~\citep{SDwatres1}.

     \begin{figure}[t]
\centering 
 \includegraphics[scale=0.55]{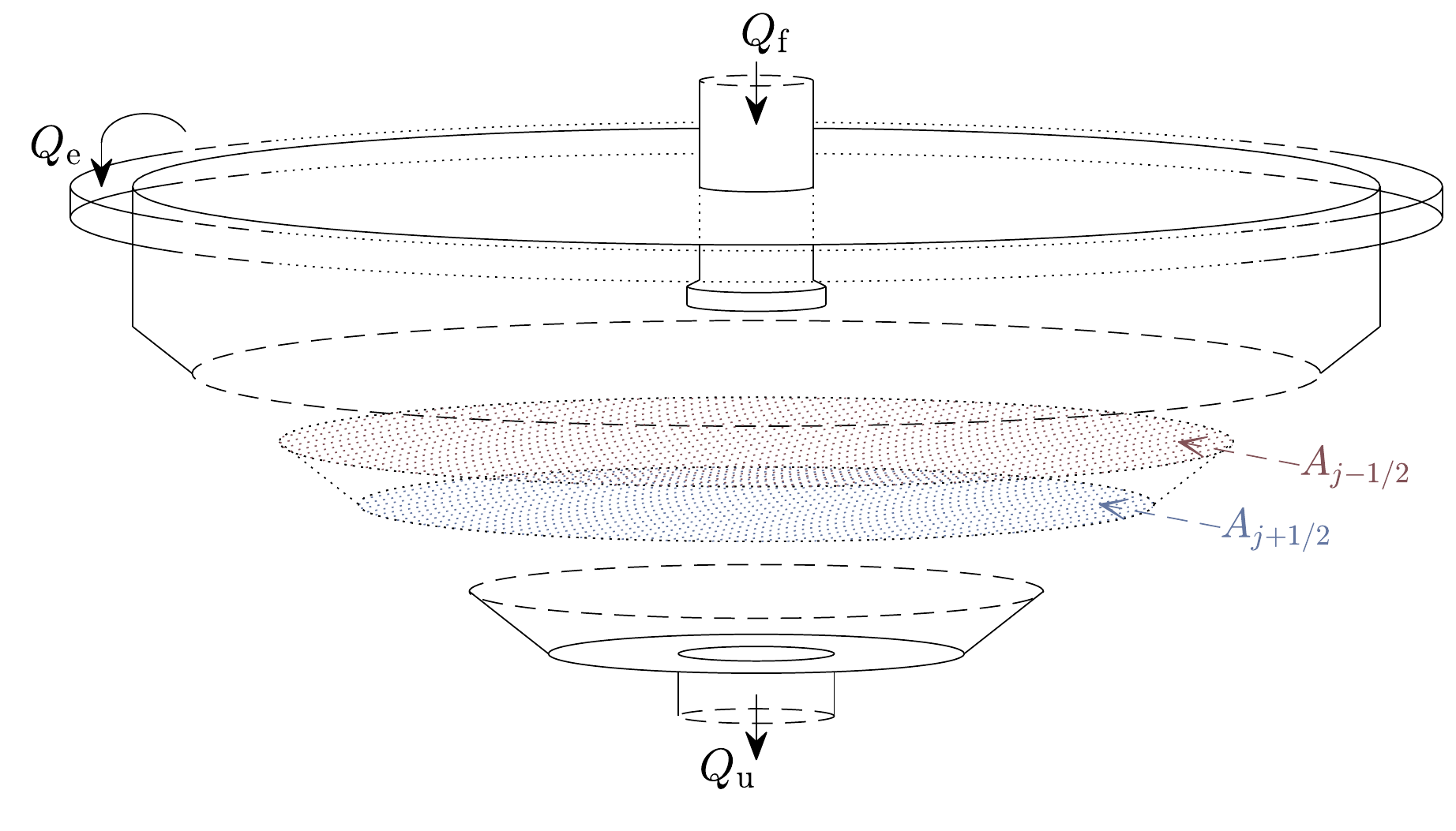}
\caption{Illustration of an axisymmetric secondary settling tank (SST). We assume a quasi-one-dimensional model of the sedimentation tank by letting the cross-sectional area $A = A(z)$ depend on  depth~$z$. The volumetric flows of the feed~$\Qf$, effluent~$\Qe$ and underflow~$\Qu$ are shown, and a volume element of the numerical method is shown. Its centre is located at  depth~$z_j$ and it is bounded above and below by circular discs of areas $A_{j-1/2}$ and $A_{j+1/2}$.} \label{fig:vesselA}
\end{figure}

\subsection{Related work}

References to one-dimensional PDE models for the simulation of continuous sedimentation of solid particles in WRRFs include \cite{Anderson1981, Chancelier1994, SDsiam2, DeClercq2003, Burger&K&T2005a};  and \cite{DeClercq2008}.
In parallel to the PDE development, several ad hoc simulation models have been presented,  of which the one by~\cite{Takacs1991} has been most widely used. That model is based on the subdivision of an SST into layers between which numerical flows are specified. The resulting
 simulation model is, however, not a valid numerical scheme for a PDE model \citep{SDcace1}. 
 
A key difficulty within the framework of one-dimensional PDEs is the nonlinear flux function, which also varies discontinuously with depth due to the inlet and outlets \citep{SDsiam2}.
Another difficulty is a nonlinear, strongly degenerate diffusion term to account for sediment compressibility \citep{Burger&K&T2005a}.
These two publications laid the foundation for the B\"{u}rger-Diehl (BD) model~\citep{SDwatres3, SDCMM2}, which has improved realism in simulations of entire WRRFs \citep{SDwst_sludge_inventory,Li2016_practical}, but above all, given the physically correct numerical solutions with discontinuities satisfying the entropy condition.
The reliability of numerical schemes to handle all the mathematical problems of the PDE model are discussed by \cite{SDcace1}.
Extensions  of the one-dimensional PDE models 
 to include a variable cross-sectional area were made by \cite{Chancelier1994}, \cite{SDsiam3} and \cite{SDcec_varyingA} (see also the references cited in these works).  

The need to model biological reactions occurring in the sedimentation tank has been addressed, for instance,  by \cite{Hamilton1992, Gernaey2006, Alex2011, Flores2012, Ostace2012, Guerrero2013}; and \cite{Li2013_oxidation}. 
A common idea has been to use the layered simulation model by \cite{Takacs1991} and to add to each layer   a system of ODEs modelling the biological reactions.
\cite{SDcace_reactive} employed a PDE batch settling model to simulate  denitrification in an SST. 
The model consists of 
  two  solid and three soluble components, where the latter are modelled by advection-diffusion equations with a constant diffusion/dispersion coefficient for all components.
\cite{Kirim2019} added the biokinetic ASM1 model~\citep{Henze2000ASMbook} to the BD model and included a varying cross-sectional area for the simulation and comparison with real data.
A PDE model and numerical scheme for continuous settling with reactions was presented by~\cite{SDm2an_reactive} and the differences between that and the present work is described in Section~\ref{sect:scope}.

\subsection{Outline of the paper} 

The remainder of this work is organized as follows. The governing model is developed in Section~\ref{sec:two} in the following steps.
 The model consists of two  phases, solid and liquid, each of  which in turn consists of components.
  The assumptions underlying these components, and the reactions between them, are specified in 
   Section~\ref{subsec:ass}. Next, in Section~\ref{subsec:balance}, we outline the balance equations of the solid and liquid components. 
    To turn these balance equations into a solvable PDE model, we utilize 
      in Section~\ref{subsec:vels} various relations between phase velocities and given feed and discharge flows
       so that the 
       unique velocity that remains to be specified to close the model is the solid-liquid relative velocity. 
        The latter is done in Section~\ref{sect:constitutive}, where we recall the expression known from available treatments of sedimentation with compression \citep{Burger&K&T2005a,SDcace1,SDCMM2} that involves constitutive assumptions for the hindered settling velocity and the effective solid stress. Combining all ingredients, we derive in Section~\ref{subsec:gov} the model equations in final form, 
         including explicit formulas of the quantities $\mathcalold{F}_{\boldsymbol{C}}$, $\mathcalold{F}_{\boldsymbol{S}}$,
          $\bmathcalold{B}_{\boldsymbol{C}}$ and~$\bmathcalold{B}_{\boldsymbol{S}}$ arising 
           in~\eqref{eq:model}. Properties of the final PDE system related to hyperbolicity and parabolicity are given in 
            Section~\ref{subsec:proppde}. Section~\ref{sec:scheme} is devoted to the description of the novel numerical scheme, 
             starting  with the spatial discretization in Section~\ref{subsec:spatdisc}, which requires the definition 
of numerical fluxes associated with boundaries of computational cells (Section~\ref{subsec:numflux}). These considerations lead 
 to spatially discrete, continuous in time MOL  formulation that is described in Section~\ref{subsec:mol}. 
  Based on the MOL formulation, we describe in Section~\ref{subsec:scheme} a fully  discrete  scheme (Method~CS), which is the main contribution of this work.  Then, in Section~\ref{subsec:propscheme} we prove that 
    under a suitable CFL condition the numerical solutions  assume  values in a certain 
      invariant region, that is, assume physically relevant values only and are in particular  non-negative. 
        Numerical examples are presented in Section~\ref{sec:num},  where we simulate the denitrification process carried out in the SSTs in wastewater treatment. Examples~1 and~2 (Sections~\ref{subsec:ex1} and~\ref{subsec:ex2}) present complete simulations with various 
         changes of feed conditions to illustrate the spatio-temporal dynamics of the reactive settling process. 
          Examples~3 to~6, presented in Section~\ref{subsec:ex3to6}, illustrate the effect of various choices of the parameters 
           describing the diffusion of soluble components.  Some conclusions are collected in Section~\ref{sec:conc}. 

\section{The model} \label{sec:two}

\subsection{Assumptions} \label{subsec:ass}

The solid phase consists of flocculated particles (biomass consisting of bacteria) 
 that belong to $k_{\bC}$ different species.
These species  have the concentrations~$\smash{C^{(k)}}$,  $k=1,\dots,k_{\boldsymbol{C}}$, which are collected in the vector~$\bC$.
The liquid phase consists of water of concentration~$W$ and $k_{\bS}$~soluble components of concentrations~$\smash{S^{(k)}}$, $k=1,\dots, k_{\bS}$, which make up the vector~$\bS$.
The total concentrations of solid particles~$X$ and liquid~$L$ are
\begin{align}\label{eq:XLdef}
 X :=  C^{(1)} + \dots + C^{(k_{\boldsymbol{C}})},\quad L := W +  S^{(1)} + \dots + S^{(k_{\boldsymbol{S}})}.
\end{align}
All these concentrations depend on depth~$z$ and time~$t$.
The vectors $\bC$ and $\bS$ contain all components in a bioreactor
model.

We let $\Xmax$ denote the maximum concentration of solids and assume that the density of all solids is the same~$\rho_X>\Xmax$.
 The density of the liquid phase is assumed to be~$\rho_L<\rho_X$, typically the density of water, irrespectively of the concentrations of the soluble components.
If $\phi$ denotes the volume fraction of the solid phase, then 
$X=\rho_X\phi$ and $L=\rho_L(1-\phi)$. 
Eliminating $\phi$ one obtains the fundamental relation 
\begin{gather}\label{eq:L}
L=\rho_L-r{X}, \quad  r:=\rho_L/\rho_X 
\quad\Leftrightarrow\quad
\frac{X}{\rho_X}+\frac{L}{\rho_L}=1.
\end{gather}
The bound $0\leq X\leq\Xmax$ implies the bound $\rho_L-r\Xmax\leq L\leq\rho_L$.

The flocculated particles, and hence all the solid components, are assumed to have the same phase velocity~$v_X$, whereas the velocities of the soluble components are $\smash{v^{(k)}}$, $k=1,\ldots,k_{\bS}$.
The volume fractions of the soluble components are~$\smash{\phi^{(k)}:=\bS^{(k)}/\rho_L}$, $k=1,\ldots,k_{\bS}$, and the liquid average velocity is
\begin{align*}
v_L:=\phi^{(1)}v^{(1)}+\cdots+\phi^{(k_{\bS})}v^{(k_{\bS})}.
\end{align*}
(Since all the liquid subphases have the same density, $v_L$ is both the volume and the mass average velocity.)
The concentrations of the feed inlet $\bCf$ and $\bSf$ are assumed to satisfy (see \eqref{eq:XLdef})
\begin{align*}
 X_{\mathrm{f}} =  C^{(1)}_{\mathrm{f}} + \dots + C^{(k_{\boldsymbol{C}})}_{\mathrm{f}},\quad L_{\mathrm{f}} = W_{\mathrm{f}} +  S^{(1)}_{\mathrm{f}} + \dots + S^{(k_{\boldsymbol{S}})}_{\mathrm{f}}.
\end{align*}
These concentrations and the volumetric flows~$\Qf\geq\Qu>0$ of the feed inlet and the underflow outlet (see Figure~\ref{fig:vesselA}) are assumed to be given functions of~$t$.
The effluent volumetric flow~$\Qe$ will generally depend on $\Qf$, $\Qu$ and the unknown concentrations 
 since  the reactions  may cause a volume change; however, we assume that the tank is always filled and $\Qe\geq 0$.
The feed concentrations are assumed to satisfy~\eqref{eq:L}, i.e., ${X_{\mathrm{f}}}/{\rho_X}+{L_{\mathrm{f}}}/{\rho_L}=1$.

The reaction terms for all particulate and soluble components are collected in the vectors $\bR_{\bC}(\bC,\bS)$ and $\bR_{\bS}(\bC,\bS)$ of lengths $k_{\bC}$ and $k_{\bS}$, respectively, which model the respective increase of bacteria and soluble components.
We assume that the water concentration $W$ does not influence (or is influenced by) any reaction.
Without bacteria there is no growth; $\bR_{\bC}(\bzero,\bS)=\bzero$, and when there is no soluble components, the bacteria cannot consume any such, however, concentrations of soluble components may increase due to decay of bacteria; hence, we assume $\bR_{\bS}(\bC,\bzero)\geq\bzero$.
If one sort of bacteria is not present; no more such can vanish, i.e.,  the  functional form of each component is chosen such that
\begin{align}\label{eq:assumptionRC}
R^{(k)}_{\bC}(\bC,\bS)\bigr|_{C^{(k)}=0} \geq 0. 
\end{align}
We define
\begin{align*}
\tilde{R}_{\bC}(\bC,\bS)&:= R_{\bC}^{(1)}(\bC,\bS) + \dots + R_{\bC}^{(k_{\bC})}(\bC,\bS)   \quad\text{and}\quad
\tilde{R}_{\bS}(\bC,\bS):= R_{\bS}^{(1)}(\bC,\bS) +  \dots +   R_{\bS}^{( k_{\bS})}(\bC,\bS).
\end{align*}
Furthermore,  it is assumed that there is no reaction in the effluent and underflow regions (which model outflow pipes), and 
  that 
the relative velocity between the solid and liquid phases
\begin{align}\label{eq:vrel}
v_X-v_L=:v_{\rm rel}=v_{\rm rel}(X,\partial X/\partial z,z)
\end{align}
is  given by a constitutive function of~$X$ and~$\partial X/ \partial z$.  This function models hindered and compressive settling inside the tank
 (see Section~\ref{sect:constitutive}), whereas
outside the tank, all components have the same velocity; hence,
\begin{align*}
v_{\rm rel}:=0\quad\text{for $z\leq -H$ and $z\geq B$.}
\end{align*}
For the relative motion of the soluble components within the liquid inside the vessel, we assume diffusion of each component relative to the average liquid velocity:
\begin{align}\label{eq:Sdiff}
\big(v^{(k)}-v_L\big)S^{(k)}=-d^{(k)}\frac{\partial S^{(k)}}{\partial z},\quad k=1,\ldots,k_{\bS},
\end{align}
where $d^{(k)}>0$ are diffusion coefficients. Strictly speaking,   several  mechanisms are ``lumped'' into the diffusion  coefficient, 
 namely Fickian (molecular) diffusion, as well as hydrodynamic dispersion (``mixing''). 

To be able to simulate the complex reality with the present  model, we have to make some further technical assumptions to be able to prove an invariant-region property:
\begin{align}\label{eq:tech}
\left. \bR_{\bC}(\bC,\boldsymbol{S})\right|_{X=\Xmax}=0,\qquad v_{\rm rel}(X_{\max},\partial X/\partial z,z)=0.
\end{align}
Simulations with realistic parameter values indicate, however, that the extreme concentrations when these assumptions are in effect never or hardly ever occur.
 Conditions~\eqref{eq:tech} state that when the maximum concentration of biomass is reached ($X=\Xmax$), the biomass cannot grow any more and its relative velocity to the liquid phase is zero.

\subsection{Balance equations} \label{subsec:balance} 

The balance law for each particulate and soluble/liquid component together with the fundamental relationships~\eqref{eq:XLdef} and \eqref{eq:L} gives $k_{\bC}+k_{\bS}+1$ equations for the unknowns $\bC$, $\bS$ and $W$; see~\eqref{eq:govC}--\eqref{eq:govW}.
These equations contain also the unknown velocities $v_X$ and $v^{(k)}$, $k=1,\ldots,k_{\bS}$.
The model is closed with the constitutive relations~\eqref{eq:vrel} and~\eqref{eq:Sdiff}.
Hence, the model equations are the following for $z\in\mathbb{R}$:
\begin{subequations}\label{eq:mod}
\begin{align}
\pp{}{t} \bigl(A(z)\bC \bigr)+\pp{}{z} \bigl(A(z)v_X\bC \bigr) & = \delta(z)\bCf\Qf + \gamma(z)A(z)\bR_{\bC},     \label{eq:govC} \\
\pp{}{t} \bigl(A(z)\bS \bigr)+\pp{}{z}\big(A(z)\mathbcal{D}_v\bS\big) & = \delta(z)\bS_\mathrm{f}\Qf + \gamma(z)A(z)\bR_{\bS}, \label{eq:govS}\\
W&=\rho_L-r{X} -  \bigl(S^{(1)}  + \dots + S^{(k_{\bS})} \bigr),\label{eq:govW}\\
v_X-v_L &=v_{\rm rel}(X,\partial X/\partial z,z),\quad\text{where}\quad v_L= v^{(1)}+\cdots+v^{(k_{\bS})}\label{eq:gov_vrel},\\   
\mathbcal{D}_v\bS-v_L \bS&=-\gamma(z)\mathbcal{D}\frac{\partial \bS}{\partial z},\label{eq:govSdiff}
\end{align}
\end{subequations}
where $\delta(z)$ is the delta function, $\smash{\gamma(z)=\chi_{\{-H<z<B\}}}$, where $\chi_I$ is the indicator function which equals one if and only if $I$ is true, $v_{\rm rel}$ a constitutive function that is specified in Section~\ref{sect:constitutive},  and  the  matrices   $\smash{\mathbcal{D}_v :=  {\rm diag} ( v^{(1)},\dots,v^{(k_{\bS})})}$  and $\smash{\mathbcal{D} := {\rm diag} ( d^{(1)},\dots,d^{(k_{\bS})})}$   come from the vectorized version of \eqref{eq:Sdiff}.

\subsection{Relations between phase, bulk and relative velocities} \label{subsec:vels} 

The next step is to express the solid and liquid velocities~$v_X$ and~$v_L$ in terms 
of   known variables and eliminate~$\mathbcal{D}_v$. To this end, we first write  the average bulk velocity
\begin{align}\label{eq:qdef}
q:=\phi v_X+(1-\phi)v_L
\end{align}
 as a function of the other variables. Summing  
  all equations of~\eqref{eq:govC}, doing the same for \eqref{eq:govS}, and using~\eqref{eq:XLdef}, $X=\rho_X\phi$ and 
  $L=\rho_L(1-\phi)$ (see  Section~\ref{subsec:ass}) 
  we obtain 
\begin{align*}
\pp{}{t} \bigl(A(z)\rho_X\phi \bigr)+\pp{}{z} \bigl(A(z)\rho_X\phi v_X \bigr) &=\delta(z)X_{\mathrm{f}}\Qf+\gamma(z)A(z)\tilde{R}_{\bC},\\
\pp{}{t} \bigl(A(z)\rho_L(1-\phi) \bigr)+\pp{}{z} \bigl(A(z)\rho_L(1-\phi) v_L\bigr) &=\delta(z)L_{\mathrm{f}}\Qf+\gamma(z)A(z)\tilde{R}_{\bS}.
\end{align*}
Dividing  the respective equation by $\rho_X$ and $\rho_L$ and then adding  them yields
\begin{align}\label{eq:temp1}
\pp{}{z}(A(z)q)&= \delta(z)\Qf + \gamma(z)A(z)\mathcalold{R},
\quad\mbox{where}\quad \mathcalold{R} := \frac{\tilde{R}_{\bC}}{\rho_X} + \frac{\tilde{R}_{\bS}}{\rho_L}. 
\end{align}
Integrating \eqref{eq:temp1} from $z$ and $B$ and using~\eqref{eq:qdef}, we get 
\begin{align*}
A(z)q(z,t) 
& = A(B)q(B,t)-\Qf(t)\chi_{\{z\leq 0\}} - 
Q_{\mathrm{reac}}(z,t;\bC,\bS),
\end{align*}
where
\begin{align*}
Q_{\mathrm{reac}}(z,t;\bC,\bS) := \int_{z}^{B}\gamma(\xi)A(\xi) \mathcalold{R}\big(\bC(\xi,t),\bS(\xi,t)\big)\,\rmd\xi,
\end{align*}
and $A(B)q(B,t) = A(B)v_L(B,t) = \Qu(t)$, since $v_{\rm rel}=0$ for $z=B$.
Hence, $q$~is expressed in terms of the other given or unknown variables by 
\begin{align*}
A(z)q(z,t) = \Qu(t) - \Qf(t)\chi_{\{z\leq 0\}} - Q_{\mathrm{reac}}(z,t;\bC,\bS) 
\end{align*}
(however; see the remark below). Consequently, a general definition of the effluent volumetric flow is \begin{align*}
\Qe(t;\bC,\bS):=-A(-H)q(-H,t)=\Qf(t)-\Qu(t) + Q_{\mathrm{reac}}(-H,t;\bC,\bS).
\end{align*}

Introducing $v:=(1-\phi)v_{\rm rel}$, one gets from~\eqref{eq:vrel} and \eqref{eq:qdef}:
\begin{align}
v_X &= q+(1-\phi)v_{\rm rel} = q +v,\label{eq:vX} \\
v_L &= q-\phi v_{\rm rel} = q - \dfrac{\phi}{1-\phi}v.\label{eq:vL}
\end{align}
The next step is to express~$v$ in terms of the other variables by constitutive assumptions.

\begin{remark} 
The dependence of $Q_{\mathrm{reac}}$ on the functions $\bC$ and $\bS$ via an integral means that the dependence is not local.
This is problematic for the analysis of numerical schemes, which otherwise are three-point schemes.
In the application to wastewater treatment, the term $Q_{\mathrm{reac}}$ is negligible (see \cite{SDm2an_reactive}).
In the proof of an invariant-region property of the numerical solution, we have to set $Q_{\mathrm{reac}}:=0$, and we assume this is the case from now on.
Then  $q$  is  defined via
\begin{align}\label{eq:qdef2}
A(z)q(z,t) = \Qu(t) - \Qf(t)\chi_{\{z\leq 0\}}.
\end{align}
\end{remark}

\subsection{Constitutive functions for hindered and compressive settling}\label{sect:constitutive}

We assume that the relative velocity is given by $v_{\rm rel}=v/(1-\phi)=v/(1-X/\rho_x)$, where $v$ is given by
\begin{equation*}
v = 
v(X,\partial X/\partial z,z) = \gamma(z)\vhs(X)\left(
1-\dfrac{\rho_X\sigma_{\mathrm{e}}'(X)}{Xg\Delta\rho}\pp{X}{z}
\right).
\end{equation*}
Here, $\vhs$ is the hindered-settling velocity function, $\sigma_{\mathrm{e}}$ the effective
solids stress, $\Delta\rho:=\rho_X-\rho_L$, and $g$ is the acceleration of gravity. Constitutive functions are needed for $\vhs$
and $\sigma_{\mathrm{e}}$. We require that $\vhs$ is decreasing, 
\begin{align} \label{vhsass} 
\vhs(\Xmax)=0
\end{align} 
 and that the derivative $\sigma_{\mathrm{e}}'(X)$ of the effective solid stress function~$\sigma_{\mathrm{e}}$ satisfies 
\begin{align} \label{smeass} 
\sigma_{\mathrm{e}}(X)
	\begin{cases}
		=0 & \text{for $X \leq X_{\mathrm{c}}$,} \\
		>0 & \text{for $X>X_{\mathrm{c}}$,}
	\end{cases}
\end{align}
where $X_{\mathrm{c}}$ is a critical concentration above which the particles touch each other and form a network that can bear a certain stress.

\subsection{Model equations in final form}\label{subsec:gov}

With the functions
\begin{align*}
d_{\bC}(X) := \frac{\vhs(X)\rho_X\sigma_{\mathrm{e}}'(X)}{Xg\Delta\rho},\qquad 
D_{\bC}(X) := \int_{X_c}^{X}d_{\bC}(s)\,{\rm d}s,
\end{align*}
we can write \eqref{eq:vX} as
\begin{align}
\vfl_X & :=\vfl_X(X,\partial X/\partial z,z,t) 
:= q(z,t) + \gamma(z)\left(\vhs(X) - \pp{D_{\bC}(X)}{z}\right).\label{eq:vlf_C}
\end{align}
Notice that the properties~\eqref{vhsass}   and~\eqref{smeass} imply that 
\begin{align}  \label{dcprop} 
d_{\bC}(X) \begin{cases} >0 & \text{for $X_{\mathrm{c}} < X < X_{\max}$,} \\
=0 & \text{for $X \leq X_{\mathrm{c}}$ and $X= X_{\max}$,}  
\end{cases} 
\end{align}
so the first PDE in \eqref{eq:model} is strongly degenerate  since it degenerates into a first-order PDE 
 on an $X$-interval of positive length (namely, on $[0, X_{\mathrm{c}}]$). 
On the other hand~\eqref{eq:govSdiff} and \eqref{eq:vL} imply
\begin{align*}
\mathbcal{D}_v\bS 
&= v_L \bS-\gamma(z)\mathbcal{D}\frac{\partial \bS}{\partial z}
= \left(q-\frac{X/\rho_X}{1-X/\rho_X}\right)(v_X-q)\bS- \gamma(z)\mathbcal{D}\frac{\partial \bS}{\partial z} 
= \frac{\rho_Xq-\vfl_X X}{\rho_X-X}\bS - \gamma(z)\mathbcal{D}\frac{\partial\bS}{\partial z}.
\end{align*}
Hence, the total mass fluxes of the balance equations~\eqref{eq:govC}--\eqref{eq:govS} can be written as
\begin{align}
\bPhi_{\bC} & := \bPhi_{\bC}(\bC,X,\partial X/\partial z,z,t) := A(z)\vfl_{X}(X,\partial X/\partial z,z,t)\bC,\label{eq:PhiC}\\
\bPhi_{\bS} & := \bPhi_{\bS}(\bS,X,\partial X/\partial z,z,t) := A(z)\left(\frac{\rho_Xq-\vfl_X X}{\rho_X-X}\bS - \gamma(z)\mathbcal{D}\frac{\partial\bS}{\partial z}\right).\label{eq:PhiS}
\end{align} 
Collecting the available results, we see that the model equations~\eqref{eq:mod} can be written as  \eqref{eq:model} for 
\begin{align}   \label{eq:modelterms} \begin{split} 
& \mathcalold{F}_{\boldsymbol{C}} (z,t,X)  =  q(z,t) + \gamma(z) \vhs(X), \quad 
 \mathcalold{F}_{\boldsymbol{S}} (z,t,X) = \frac{ \rho_X q(z,t) - ( q(z,t) + \gamma(z) \vhs(X) )  X }{\rho_X - X},  \\
&  \bmathcalold{B}_{\bC}(\boldsymbol{C},\bS,z,t)   = \delta(z)\bCf\Qf + \gamma(z)A(z)\bR_{\bC}, \quad 
  \bmathcalold{B}_{\bS}(\boldsymbol{C},\bS,z,t) = \delta(z)\bSf\Qf + \gamma(z)A(z)\bR_{\bS},  \end{split} 
\end{align} 
supplied with Equation~\eqref{eq:govW} to calculate  the water concentration $W$ whenever required; note that~$W$ is not present in \eqref{eq:model},  \eqref{eq:modelterms}.  For the development of the numerical method, however, it will be useful to rewrite the  governing PDEs in terms of the total fluxes \eqref{eq:PhiC} and \eqref{eq:PhiS}. We then obtain \begin{subequations}\label{finalmod}
\begin{align}
\pp{(A(z)\bC)}{t}+\pp{\bPhi_{\bC}}{z} & = \delta(z)\bCf\Qf + \gamma(z)A(z)\bR_{\bC},     \label{eq:govCfinal} \\
\pp{(A(z)\bS)}{t}+\pp{\bPhi_{\bS}}{z} & = \delta(z)\bSf\Qf + \gamma(z)A(z)\bR_{\bS} \quad \text{for $z\in\mathbb{R}$ and $t>0$.} \label{eq:govSfinal}
\end{align}
\end{subequations}
No boundary condition is needed. The initial values are the concentrations of solid and liquid components:
\begin{align*}
\bC^0 (z) &= \big(C^{(1),0}(z) ,C^{(2),0}(z) ,\dots, C^{(k_{\bC}),0}(z) \big)^{\mathrm{T}},\quad
\bS^0 (z) = \big(S^{(1),0}(z) ,S^{(2),0}(z) ,\dots, S^{(k_{\bS}),0}(z) \big)^{\mathrm{T}}, \quad z \in \mathbb{R}. 
\end{align*}
Clearly, the corresponding initial total solids and water concentrations are obtained by
\begin{align*}
X^0(z) = C^{(1),0}(z) + \dots +  C^{(k_{\bC}),0}(z) ,\quad W^0 (z) = \rho_L-rX^0 (z) 
 - \bigl(S^{(i),0} (z) + \dots + S^{(k_{\bS}),0} (z) \bigr).  
\end{align*}
We define the solution vector $\boldsymbol{U}(z,t)  :=  (\bC(z,t),\bS(z,t),W(z,t))$ and 
 $\smash{\boldsymbol{U}^0(z)  :=  (\bC^0(z),\bS^0(z),W^0(z))}$. It is presupposed that 
 \begin{align}  \label{init-omega} 
   \boldsymbol{U}^0(z)  \in \Omega \quad \text{for all $z \in \mathbb{R}$,} 
   	 \end{align} 
    where we define the set 
  \begin{align}\label{eq:Omega} \begin{split} 
\Omega := \bigl\{ & \boldsymbol{U} = (\boldsymbol{C}, \boldsymbol{S}, W)  \in\mathbb{R}^{k_{\bC}+k_{\bS}+1}:  \\ 
 & 0\leq  C^{(1)} , \dots, C^{(k_{\bC})}  \leq\Xmax, \quad 
  0\leq  C^{(1)} + \dots + C^{(k_{\bC})}   \leq\Xmax, \quad 
  S^{(1)},  \dots,  S^{(k_{\bS})} \geq 0 \bigr\}.  \end{split} 
\end{align}
It will be shown that under  the condition \eqref{init-omega} the numerical solution assumes values in~$\Omega$. 

\subsection{Properties of the PDE system} \label{subsec:proppde} 

If one assumes that  $X \leq X_{\mathrm{c}}$  (cf.\ \eqref{dcprop}) and in addition sets  $\mathbcal{D}=\bzero$, 
 then the system~\eqref{eq:model}, or equivalently \eqref{finalmod}, reduces to a first-order  system of conservation laws away from source terms, and $\bPhi_{\bC}$ and $\bPhi_{\bS}$ depend only on $X$ (and $z$ and $t$, which we do not write out now). 
This system is recovered if all right-hand sides in \eqref{eq:model} are set to zero. 
As the following proposition implies, this system is non-strictly hyperbolic, which means that its solution for a Riemann initial datum is involved. 
This property is established by examining the eigenvalues of the $(k_{\boldsymbol{C}} + k_{\boldsymbol{S}}) \times (k_{\boldsymbol{C}} + k_{\boldsymbol{S}})$ Jacobian matrix of the conservation law of the conserved variable $A(z)(\bC^\mathrm{T},\bS^\mathrm{T})^\mathrm{T}$, which is
  \begin{align*} 
   \bJ:=\begin{bmatrix} \bJ_{11}&\bzero_{k_{\bC}\times k_{\bS}}\\[1mm] 
\bJ_{21}&\bJ_{22} \end{bmatrix}, 
\end{align*} 
with the sub-matrices (in obvious notation) 
\begin{align*} 
 \boldsymbol{J}_{11} := \pp{(\mathcalold{F}_{\boldsymbol{C}} (z,t, X) \boldsymbol{C} 
    )}{\boldsymbol{C}}    , \quad 
    \boldsymbol{J}_{21} := \pp{(\mathcalold{F}_{\boldsymbol{S}} (z,t, X) \boldsymbol{S} 
    )}{\boldsymbol{C}} , \quad  
    \boldsymbol{J}_{22} := \pp{(\mathcalold{F}_{\boldsymbol{S}} (z,t, X) \boldsymbol{S} 
    )}{\boldsymbol{S}}, 
    \end{align*} 
and where the $k_{\bC}\times k_{\bS}$ block of zeros $\smash{\bzero_{k_{\bC}\times k_{\bS}}}$ appears since $\mathcalold{F}_{\boldsymbol{C}} (z,t, X) \boldsymbol{C}$ does not depend on~$\boldsymbol{S}$. 
Note that the eigenvalues of~\eqref{eq:model} do not depend on $A(z)$. In what follows, we fix~$z$ and~$t$ and write $\mathcalold{F}_{\boldsymbol{C}} (X)$ and $\mathcalold{F}_{\boldsymbol{s}} (X)$ instead of $\mathcalold{F}_{\boldsymbol{C}} (z,t, X)$ and $\mathcalold{F}_{\boldsymbol{s}} (z,t, X)$.

\begin{proposition}\label{prop:eig}
The Jacobian matrix of the flux vector of the system~\eqref{eq:model} has two real eigenvalues:
\begin{align*}
\lambda_1&=q+\gamma f'(X),\quad\text{where}\quad f(X):=\vhs(X)X,\\
\lambda_2&=\mathcalold{F}_{\boldsymbol{S}}(X)
= q-\frac{\gamma(z)f(X)}{\rho_X-X}. 
\end{align*}
\end{proposition}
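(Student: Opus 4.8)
The plan is to read the spectrum of $\bJ$ off its block lower-triangular shape. Since the $(1,2)$-block vanishes, the characteristic polynomial factors as $\det(\bJ-\lambda\bI)=\det(\bJ_{11}-\lambda\bI)\,\det(\bJ_{22}-\lambda\bI)$, so it suffices to diagonalize each diagonal block separately. The decisive structural fact is that the flux coefficients $\mathcalold{F}_{\boldsymbol{C}}$ and $\mathcalold{F}_{\boldsymbol{S}}$ depend on the unknowns only through the scalar $X=C^{(1)}+\dots+C^{(k_{\bC})}=\boldsymbol{1}^{\mathrm{T}}\boldsymbol{C}$, which I would exploit directly rather than expanding determinants.

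For the soluble block I note that $\mathcalold{F}_{\boldsymbol{S}}(X)\boldsymbol{S}$ depends on $\boldsymbol{S}$ only through the explicit factor $\boldsymbol{S}$, whence $\bJ_{22}=\partial\bigl(\mathcalold{F}_{\boldsymbol{S}}(X)\boldsymbol{S}\bigr)/\partial\boldsymbol{S}=\mathcalold{F}_{\boldsymbol{S}}(X)\bI$. This immediately gives the eigenvalue $\lambda_2=\mathcalold{F}_{\boldsymbol{S}}(X)$ of multiplicity $k_{\bS}$, and a one-line manipulation of the definition in \eqref{eq:modelterms}, writing the numerator as $q(\rho_X-X)-\gamma\vhs(X)X$ and using $f(X)=\vhs(X)X$, puts it in the stated form $q-\gamma f(X)/(\rho_X-X)$.

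For the solid block the key observation is the chain rule $\partial X/\partial\boldsymbol{C}=\boldsymbol{1}^{\mathrm{T}}$, which yields $\bJ_{11}=\mathcalold{F}_{\boldsymbol{C}}(X)\bI+\mathcalold{F}_{\boldsymbol{C}}'(X)\,\boldsymbol{C}\boldsymbol{1}^{\mathrm{T}}$, a rank-one update of a multiple of the identity. For any matrix $\alpha\bI+\boldsymbol{u}\boldsymbol{w}^{\mathrm{T}}$ the spectrum consists of $\alpha$ on the hyperplane $\{\boldsymbol{x}:\boldsymbol{w}^{\mathrm{T}}\boldsymbol{x}=0\}$ together with the single value $\alpha+\boldsymbol{w}^{\mathrm{T}}\boldsymbol{u}$ along $\boldsymbol{u}$; here $\alpha=\mathcalold{F}_{\boldsymbol{C}}(X)$, $\boldsymbol{u}=\mathcalold{F}_{\boldsymbol{C}}'(X)\boldsymbol{C}$, $\boldsymbol{w}=\boldsymbol{1}$, so $\boldsymbol{w}^{\mathrm{T}}\boldsymbol{u}=\mathcalold{F}_{\boldsymbol{C}}'(X)X$. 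Substituting $\mathcalold{F}_{\boldsymbol{C}}(X)=q+\gamma\vhs(X)$ and $\mathcalold{F}_{\boldsymbol{C}}'(X)=\gamma\vhs'(X)$ and collapsing $\vhs(X)+X\vhs'(X)=f'(X)$ identifies the distinguished eigenvalue as $\lambda_1=q+\gamma f'(X)$. The remaining $(k_{\bC}-1)$-fold eigenvalue equals $\mathcalold{F}_{\boldsymbol{C}}(X)=q+\gamma\vhs(X)$, which together with the $k_{\bS}$-fold $\lambda_2$ is precisely the mechanism behind the non-strict hyperbolicity announced before the statement. Since every eigenvalue is a real-valued expression, reality (hence hyperbolicity) is automatic.

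I do not expect a genuine obstacle: the argument is essentially linear algebra once the $X$-dependence is recognized. The only points demanding care are bookkeeping the multiplicities in the rank-one step, so that exactly one eigenvalue of $\bJ_{11}$ is shifted off $\mathcalold{F}_{\boldsymbol{C}}(X)$, and the algebraic reduction of $\mathcalold{F}_{\boldsymbol{S}}$; neither is deep. The harder question, deliberately left aside here, is the eigenvector structure and the (non)genuine nonlinearity of each characteristic field, which is what actually governs the Riemann solution alluded to in the surrounding text.
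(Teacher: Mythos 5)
Your proof is correct, and it follows the same skeleton as the paper's own argument: exploit the block lower-triangular form of $\bJ$, read off $\bJ_{22}=\mathcalold{F}_{\boldsymbol{S}}(X)\bI_{k_{\bS}}$, and treat $\bJ_{11}$ as a rank-one perturbation of $\mathcalold{F}_{\boldsymbol{C}}(X)\bI_{k_{\bC}}$. The difference lies in how the rank-one block is diagonalized, and here your version is actually the more precise one. The paper determines $\lambda_1$ by an eigenvector ansatz: it argues that generically an eigenvector must be parallel to $\bC$, obtains $\lambda_1=\frac{\rmd}{\rmd X}\bigl(\mathcalold{F}_{\boldsymbol{C}}(X)X\bigr)$, asserts that this eigenvalue has multiplicity $k_{\bC}$, and concludes that all eigenvalues have been found. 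That bookkeeping is off for $k_{\bC}\geq 2$: the ansatz only captures eigenvectors $\boldsymbol{Z}$ with $\boldsymbol{1}_{k_{\bC}}^{\mathrm{T}}\boldsymbol{Z}\neq 0$ and misses the hyperplane $\boldsymbol{1}_{k_{\bC}}^{\mathrm{T}}\boldsymbol{Z}=0$, on which $\bJ_{11}$ acts as multiplication by $\mathcalold{F}_{\boldsymbol{C}}(X)$. Your rank-one lemma gives the correct count, confirmed by the matrix determinant lemma, $\det(\bJ_{11}-\lambda\bI)=(\mathcalold{F}_{\boldsymbol{C}}(X)-\lambda)^{k_{\bC}-1}(\lambda_1-\lambda)$: generically $\lambda_1$ is simple and $q+\gamma\vhs(X)$ is a $(k_{\bC}-1)$-fold eigenvalue, exactly as you state. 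Since the proposition asserts only that $\lambda_1$ and $\lambda_2$ are real eigenvalues, and the non-strict-hyperbolicity discussion that follows uses only these two, the extra eigenvalue does no harm --- if anything it reinforces the non-strictness, since $\mathcalold{F}_{\boldsymbol{C}}(X)$ collides with $\lambda_1$ wherever $\vhs'(X)X=0$. Your uniform treatment also absorbs the degenerate cases ($\mathcalold{F}_{\boldsymbol{C}}'(X)=0$ or $\bC=\bzero$) that the paper relegates to a closing parenthetical, because for $\alpha\bI+\boldsymbol{u}\boldsymbol{w}^{\mathrm{T}}$ the vector $\boldsymbol{u}$ is an eigenvector for $\alpha+\boldsymbol{w}^{\mathrm{T}}\boldsymbol{u}$ whether or not $\boldsymbol{w}^{\mathrm{T}}\boldsymbol{u}$ vanishes.
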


\begin{proof}
The Jacobian matrix of the flux vector is 
\begin{align*}
\bJ =
\begin{bmatrix}
\mathcalold{F}_{\boldsymbol{C}}(X)\bI_{k_{\bC}} + \mathcalold{F}_{\boldsymbol{C}}'(X)\bC\boldsymbol{1}_{k_{\bC}}^\mathrm{T} & \bzero_{k_{\bC}\times k_{\bC}}\\[1mm]
\mathcalold{F}_{\boldsymbol{S}}'(X)\bS\boldsymbol{1}_{k_{\bS}}^\mathrm{T} & \mathcalold{F}_{\boldsymbol{S}}(X)\bI_{k_{\bS}}
\end{bmatrix},
\end{align*}
where $\smash{\bI_{k_{\bC}}}$ is the identity matrix of size $k_{\bC}\times k_{\bC}$ and $\smash{\boldsymbol{1}_{k_{\bC}}}$ is a column vector of length $k_{\bC}$ full of ones, so that $\smash{\bC\boldsymbol{1}_{k_{\bC}}^\mathrm{T}}$ represents a tensor product.
The Jacobian is block lower triangular with eigenvalues those of $\bJ_{11}$ and $\bJ_{22}$.
The latter matrix is diagonal with the single real eigenvalue $\lambda_2= \mathcalold{F}_{\boldsymbol{S}}$ of multiplicity~$k_{\bS}$, while $\bJ_{11}$~is the sum of a diagonal matrix and a rank-one matrix.
Any eigenvector $\boldsymbol{Z}$ to $\bJ_{11}$ with eigenvalue $\lambda_1$ should satisfy
\begin{align*}
\bJ_{11}\boldsymbol{Z} = \lambda_1\boldsymbol{Z}\quad\Leftrightarrow\quad \mathcalold{F}_{\boldsymbol{C}}(X)\boldsymbol{Z} + \mathcalold{F}_{\boldsymbol{C}}'(X)\boldsymbol{1}_{k_{\bC}}^\mathrm{T}\boldsymbol{Z}\bC = \lambda_1\boldsymbol{Z}.
\end{align*}
Generally, $\mathcalold{F}_{\boldsymbol{C}}'(X)\boldsymbol{1}_{k_{\bC}}^\mathrm{T} \boldsymbol{Z}\bC\neq 0$, and then $\boldsymbol{Z}$ has to be parallel to $\bC$, say $\boldsymbol{Z}=\alpha\bC$, $\alpha\neq 0$.
Since $\boldsymbol{1}_{k_{\bC}}^\mathrm{T}\bC=X$, the corresponding eigenvalue (with multiplicity $k_{\bC}$) is
\begin{align*}
\lambda_1&=\mathcalold{F}_{\boldsymbol{C}}(X) +\mathcalold{F}_{\boldsymbol{C}}'(X)X = 
\frac{\rmd}{\rmd X}\big(\mathcalold{F}_{\boldsymbol{C}}(X)X\big) = 
\frac{\rmd}{\rmd X}\big((q+\gamma\vhs(X))X\big).
\end{align*}
We have thus found all the eigenvalues.
(If $\mathcalold{F}_{\boldsymbol{C}}'(X)=0$, then $\bJ_{11}$ is diagonal with the single real eigenvalue $\mathcalold{F}_{\boldsymbol{C}}(X) =\mathcalold{F}_{\boldsymbol{C}}(X) + \mathcalold{F}_{\boldsymbol{C}}'(X)X = \lambda_1$.
Similarly, if $\bC=\bzero$, then $X=0$ and $\bJ_{11}$ is diagonal with the single real eigenvalue $\mathcalold{F}_{\boldsymbol{C}}(0) =\mathcalold{F}_{\boldsymbol{C}}(0) + \mathcalold{F}_{\boldsymbol{C}}'(0)0 = \left.\lambda_1\right|_{X=0}$.)
\end{proof}

\smallskip 

To see that the eigenvalues are not distinct, we may calculate, for instance, 
\begin{align*}
\left.\lambda_1\right|_{X=0}&=q+\gamma f'(0) >q=\left.\lambda_2\right|_{X=0},\\
\left.\lambda_1\right|_{X=\Xmax}&=q+\gamma f'(\Xmax) \leq q =\left.\lambda_2\right|_{X=\Xmax}.
\end{align*}
Thus, there is some $X \in (0, X_{\max}]$ for which $\lambda_1 = \lambda_2$.

\begin{remark}

Finally, let us briefly comment on the diffusive parts of the PDEs in~\eqref{eq:model}. In order not to 
 complicate the argument, let us assume that $A = \mathrm{const.}$, so we may divide the PDEs by~$A$.
Furthermore, assume that $\gamma(z) =1$. 
In this case  the  diffusion term in the first equation can be written as 
 \begin{align} \label{eq:nice} \begin{split} 
\pp{}{z}\biggl( \pp{D_{\bC}(X)}{z} \boldsymbol{C} \biggr)
& = \pp{}{z}\biggl(  d_{\bC}(X) \pp{X}{z} \boldsymbol{C} \biggr) 
= \pp{}{z}\biggl(  d_{\bC}(X) \biggl(\boldsymbol{1}_{k_{\bC}}^{\mathrm{T}}  \pp{ \boldsymbol{C} }{z}\biggr)  \boldsymbol{C} \biggr)
 =\pp{}{z}\biggl( \boldsymbol{D} ( \boldsymbol{C}) \pp{\bC}{z} \biggr) 
\end{split} 
\end{align} 
with the $\smash{k_{\bC} \times k_{\bC}}$ diffusion matrix 
  $\smash{\boldsymbol{D} ( \boldsymbol{C})  =  
d_{\bC}(X)\boldsymbol{C}\boldsymbol{1}_{k_{\bC}}^{\mathrm{T}} 
=     d_{\bC} \bigl( \boldsymbol{1}_{k_{\bC}}^{\mathrm{T}}  \boldsymbol{C} \bigr) 
 \boldsymbol{C}\boldsymbol{1}_{k_{\bC}}^{\mathrm{T}}}$. 
Assume now that 
 $ X_{\mathrm{c}} < X < X_{\max}$.  Then $ \boldsymbol{D} (\boldsymbol{C})$ is a 
 rank-one matrix   whose only nonzero eigenvalue equals 
 \begin{align*} 
  \mu = \mu ( \boldsymbol{C}) =   d_{\bC} \bigl( \boldsymbol{1}_{k_{\bC}}^{\mathrm{T}}  \boldsymbol{C} \bigr)  
   \boldsymbol{1}_{k_{\bC}}^{\mathrm{T}}  \boldsymbol{C} =  d_{\bC} (X) X, 
  \end{align*}  
  with $\boldsymbol{C}$ (or a multiple of it) being the corresponding eigenvector. 
  Since $\mu ( \boldsymbol{C}) >0 $, the matrix 
   $ \boldsymbol{D} (\boldsymbol{C})$ is positive semidefinite, and therefore the corresponding system of PDEs is 
    parabolic in the sense of Petrovsky (or simply {\em parabolic}) \citep{fried64,lsu68,eidel69,tayl96}. 
     Furthermore, if $\boldsymbol{C}$ is a vector such that $C^{(i)} >0$ for $i  = 1, \dots , 
      k_{\mathrm{C}}$, then  $ \boldsymbol{D} ( \boldsymbol{C})$ is a full matrix with no zero entries, 
       so in principle the  model involves  cross diffusion 
        (that is, the  diffusive flux of any species $\smash{C^{(i)}}$ does not only depend 
         on $\smash{\partial  C^{(i)}/ \partial z}$, but on   $\smash{\partial  C^{(m)}/ \partial z}$
          for all $m= 1, \dots, k_{\boldsymbol{C}}$). 
           
  For the particular case $k_{\boldsymbol{C}} =1$ and $k_{\boldsymbol{S}} =0$ and 
 if no reactions take place ($\boldsymbol{R}_{\boldsymbol{C}}$ and $\boldsymbol{R}_{\boldsymbol{S}}$ are set to zero), 
  the model reduces to the well-known mechanistic B\"{u}rger-Diehl (BD) model of sedimentation with compression. 
 In this case we may identify $C:= C^{(1)} = \boldsymbol{C}$ and $X$. Calculations similar to \eqref{eq:nice} then reveal that the nonlinear diffusion term in the first PDE of \eqref{eq:model} satisfies 
   \begin{align*}
    \pp{D_{\boldsymbol{C}}  (X)}{z} X = d_{\boldsymbol{C}} \pp{X}{z} X = 
     \frac{v_{\mathrm{hs}} (X) \rho_X \sigma_{\mathrm{e}}' (X)}{X g \Delta \rho}
      \pp{X}{z} X =  \frac{v_{\mathrm{hs}} (X) \rho_X \sigma_{\mathrm{e}}' (X)}{g \Delta \rho} \pp{X}{z},
      \end{align*} 
which is the diffusion term accounting for sediment compressibility within the BD model
    \citep{SDcace1,SDCMM2,SDm2an_reactive}. The agreement  of  the convection term 
     ($\mathcalold{F}_{\boldsymbol{S}} (z, t, X) X$, in this case) and of 
      the terms describing the feed source with those of the BD model is  easily verified. 
      Furthermore, in this case the water concentration is $W=\rho_L- rX$ (see \eqref{eq:govW}). 
   \end{remark}

\section{Numerical scheme} \label{sec:scheme} 

\subsection{Spatial discretization} \label{subsec:spatdisc}

We divide the tank into $N$~internal computational cells,
or layers, so that each layer has the depth $\Delta z = (B+H)/N$. The location of layer~$j$ is such that its
midpoint has the coordinate $z_j$, hence the layer is the interval
$[z_{j-1/2},z_{j+1/2}]$. The top layer~1 in the
clarification zone is thus the interval $[z_{1/2},z_{3/2}]=[-H,-H+\Delta z]$,
and the bottom location is $z=z_{N+1/2}=B$. We define $j_{\mathrm{f}}: = \lceil
H/\Delta z \rceil$, which is equal to the smallest integer larger than or equal
to $H/\Delta z$. Then the feed inlet ($z=0$) is located in layer~$j_{\mathrm{f}}$ (henceforth, the ``feed layer''). We add a layer to both the top and bottom to extract the
correct effluent and underflow concentrations, respectively. 

The average values of the unknowns in each layer $j$ are denoted by
$\smash{\bC_{j}=\bC_{j}(t)}$, $\smash{\bS_j=\bS_j(t)}$, and $\smash{W_j=W_j(t)}$.
The unknown output functions at the effluent and underflow are defined as $\bC_\mathrm{e}(t):=\bC_{0}(t)$, $\bC_\mathrm{u}(t):=\bC_{N+1}(t)$, etc. 
Two outer variables appear in the formulas for the numerical scheme; however, their values are irrelevant, so we may set $\bC_{-1}:= \bzero$, $\bC_{N+2}:=\bzero$, and analogously for other variables. 
The computational domain is given by $N+2$ intervals and one needs to define numerical fluxes for $N+3$ layer boundaries.

To approximate the cross-sectional area and the corresponding cell volumes we define
\begin{align*}
 A_{j+1/2}  &:= \dfrac{1}{\Delta z}\int_{z_{j-1}}^{z_j} A(\xi)\,{\rm d} \xi \quad \mbox{and}\quad  A_j := \dfrac{1}{\Delta z}\int_{z_{j-1/2}}^{z_{j+1/2}} A(\xi)\,{\rm d }\xi.
\end{align*}
In case $A$ is continuous one can use $A_{j+1/2}:=A(z_{j+1/2})$ as an alternative. 

The unknwons are approximated   by piecewise constant functions in each layer, i.e.,
\begin{align*}  C^{(k)}(z,t)=C_j^{(k)} \quad \text{for 
$z\in(z_{j-1/2},z_{j+1/2}]$.} 
\end{align*}
We let $\gamma_{j+1/2}:=\gamma(z_{j+1/2})$ and define the approximate volume average velocity $\qjph:=q(z_{j+1/2},t)$ in accordance with \eqref{eq:qdef2} with $Q_{\mathrm{reac}}\equiv 0$:
\begin{align*}
A_{j+1/2}\qjph &:= \Qu(t) - \gamma^{\; \rm f}_{j+1/2}Q_{\rm f}(t),
 \quad \text{where} \quad \gamma^{\; \rm f}_{j+1/2} := \chi_{\{j<j_{\rm f}\}}.
\end{align*}

\subsection{Numerical fluxes} \label{subsec:numflux} 

The flux $\bPhi_{\bC}$ given by~\eqref{eq:PhiC} is discretized over the cell boundary $z=z_{j+1/2}$ in an upwind 
or downwind  fashion depending on the sign of the total velocity~$\vfl_{X}$.
 The flux  $\Phi_{\bS}$ in~\eqref{eq:PhiS} is handled in a similar way depending on the sign of $\rho_X q-\vfl_X X$. 
  The   diffusion term is discretized in a  standard way.
We start by approximating the velocity $\vfl_X$, which contains three terms. The first term is straightforward; $\qjph(t):=q(z_{j+1/2},t)$, and for the third term we use central finite differences, i.e.
\begin{align*}
  J_{j+1/2}^{\bC} = J_{j+1/2}^{\bC}(X_j,X_{j+1}) &:= \frac{D_{\bC}(X_{j+1})-D_{\bC}(X_j)}{\Delta z}.
\end{align*}
For the numerical implementation of $D_{\bC}(X_j)$, we refer to~\cite{SDwatres3} (see Algorithm~2 and~3 therein).
For the second term in \eqref{eq:vlf_C},   $\vhs(X_{j+1})\approx\vhs(X(z_{j+1/2},t))$ is chosen with the following motivation.
When $q=0$, $D_{\bC}=0$ and there is only one component of~$\bC$, the flux~\eqref{eq:PhiC} is $\bPhi_{\bC}=A(z)\vfl_X X=A(z)\vhs(X)X$ and a working numerical flux that gives a monotone numerical scheme is $A(z)\vhs(X_{j+1})X_{j}$; see \cite{Burger&G&K&T2008b}.
Thus, the velocity~\eqref{eq:vlf_C} between cells~$j$ and~$j+1$ is approximated by
\begin{align*}
\vfl^{X}_{j+1/2}&=\vfl^{X}_{j+1/2}(X_{j},X_{j+1},t)
:=\qjph + \gamma_{j+1/2}\big(\vhs(X_{j+1}) - J_{j+1/2}^{\bC}\big).
\end{align*}
In the case $q=0$, $D_{\bC}=0$ and there is only one component of~$\bC$, our choice of upwind total flux would be $A(z)F^X_{j+1/2}$ where
\begin{align} \label{eq:fluxFX}
 F^X_{j+1/2}:= F^X_{j+1/2}(X_{j},X_{j+1},t) :=  (\vfl_X X)_{j+1/2} := \QCphm X_{j+1} + \QCphp X_{j}, 
\end{align}
where we use the notation $a^-:=\min\{a,0\}$ and $a^+:=\max\{a,0\}$. 
A key point in obtaining a working numerical scheme is that this flux is used in the approximation of the flux~$\Phi_{\bS}$ in~\eqref{eq:PhiS}.
Summarizing, we approximate  the fluxes of \eqref{finalmod}  by
\begin{align*}
\bPhi^{\bC}_{j+1/2}&
:= A_{j+1/2}\bigl(\QCphm\bC_{j+1} + \QCphp\bC_{j}\bigr),\\
\bPhi^{\bS}_{j+1/2}&
:= A_{j+1/2}\biggl(\dfrac{(\rho_Xq_{j+1/2}-F^X_{j+1/2})^-\bS_{j+1}}{\rho_X-X_{j+1}} + \dfrac{(\rho_Xq_{j+1/2}-F^X_{j+1/2})^+\bS_{j}}{\rho_X-X_{j}} -\gamma_{j+1/2}\mathbcal{D}\frac{\bS_{j+1}-\bS_{j}}{\Delta z}\biggr).
 \end{align*}
Note that the numerical  flux vector $\smash{\bPhi^{\bC}_{j+1/2}}$ is a function of $(\bC_j,\bC_{j+1},t)$ while  $\smash{\bPhi^{\bS}_{j+1/2}}$ depends on $(\bS_j,\bS_{j+1},X_j,X_{j+1},t)$. Moreover, the term 
 $\smash{F^X_{j+1/2}}$ in \eqref{eq:fluxFX}  results from summing  up the components of the vector~$\smash{\bPhi^{\bC}_{j+1/2}}$.
 
\subsection{Method of lines (MOL) formulation} \label{subsec:mol} 
We introduce the notation $\smash{[\Delta \bPhi]_j:=\bPhi_{j+1/2}-\bPhi_{j-1/2}}$ for the flux difference associated with cell~$j$ and let
$\smash{\delta_{j,j_{\mathrm{f}}}}$ denote the Kronecker delta, which is~1 if
$j=j_{\mathrm{f}}$ and zero otherwise. 
The conservation of mass for each layer,
corresponding to \eqref{eq:govCfinal}--\eqref{eq:govSfinal}, gives the following MOL equations (for $j=0,\ldots,N+1$):
\begin{align} \label{eq:MOL}
\begin{split}
\dd{\bC_j}{t}   &  = -\frac{[\Delta\bPhi^{\bC}]_j}{A_j\Delta z} +\delta_{j,j_{\mathrm{f}}}\frac{\bC_{\rm f}\Qf}{A_j\Delta z}
                           +\gamma_j\bR_{C,j},\\
\dd{\bS_j}{t}  &= -\frac{[\Delta\bPhi^{\bS}]_j}{A_j\Delta z} +\delta_{j,j_{\mathrm{f}}}\frac{\bS_{\rm f}\Qf}{A_j\Delta z} 
                           + \gamma_j\bR_{S,j},
\end{split}
\end{align}
The approximate water concentrations can be calculated  after the entire simulation via 
\begin{align*} 
W_j  & = {\rho_L} - rX_j  - \bigl(S_j^{(1)}+\cdots + S_j^{(k_{\bS})}\bigr).
\end{align*}

\subsection{Explicit fully discrete scheme} \label{subsec:scheme}

Let $t_n$, $n=0,1,\ldots, T$, denote the discrete time points and $\Delta t$
the time step that should satisfy a certain CFL condition depending on the
chosen time-integration method. For explicit 
schemes, the right-hand sides of equations~\eqref{eq:MOL} are evaluated at time $t_n$. 
The value of a variable at time $t_n$ is denoted by an upper index, e.g., $\smash{\bC_j^n}$.
The main restriction of the time step (for small $\Delta z$) is due to the
second-order spatial derivatives in the compression term (B\"urger et al.,
2005, 2012).
For explicit Euler, the time derivatives in \eqref{eq:MOL} are approximated by 
\begin{equation*}
\dd{\bC_j}{t}(t_n) \approx\frac{\bC_j^{n+1}-\bC_j^{n}}{\Delta t}.
\end{equation*}
We set
\begin{align*}
Q_\mathrm{f}^n:=\frac{1}{\Delta t}\int_{t_n}^{t_{n+1}}Q_\mathrm{f}(t)\,\rmd t
\end{align*}
and similarly for the time-dependent reaction terms.
Then we obtain the  explicit scheme 
\begin{subequations} \label{eq:numscheme}
\begin{align}
\bC_j^{n+1} & = \bC_j^n + \dfrac{\Delta t}{A_j\Delta z}\bigl(-[\Delta \bPhi^{\bC}]^n_j + \delta_{j,j_{\mathrm{f}}}\bC_{\rm f}^n Q_{\rm f}^n+\gamma_jA_j\Delta z\,\bR_{\bC,j}^n\bigr), \label{eq:timeC}\\
\bS_j^{n+1} & = \bS_j^n+\dfrac{\Delta t}{A_j\Delta z}\bigl(-[\Delta \bPhi^{\bS}]^n_j + \delta_{j,j_{\mathrm{f}}}\bS_{\rm f}^n Q_{\rm f}^n+\gamma_jA_j\Delta z\,\bR_{\bS,j}^n\bigr).\label{eq:timeS}
\end{align}
\end{subequations}

To establish some boundedness properties of Method~CS, we introduce the  CFL condition 
\begin{align} \label{cfl}\tag{CFL}
\Delta t  \max\{\beta_1,\beta_2\}
\leq 1,
\end{align}
where the $\beta$-values depend on $\Delta z$, $\Delta z^2$  and the constitutive functions by
\begin{align*}
\beta_1 &:= \frac{\|\Qf\|_{\infty,T}}{A_\mathrm{min}\Delta z}  + \frac{M_1}{\Delta z}
\big(
\|\vhs'\|_\infty\Xmax + \vhs(0)
\big) 
+ \frac{M_2}{\Delta z^2}
\big(
\|d_{\bC}\|_\infty\Xmax + D_{\bC}(\Xmax)
\big)
+ \max\{M_{\bC},\tilde{M}_{\bC}\}, \\
\beta_2 &:= 
\dfrac{\rho_X+\Xmax}{\rho_X-\Xmax} \frac{\|\Qf\|_{\infty,T}}{A_\mathrm{min}\Delta z} +\dfrac{\Xmax M_1}{\rho_X-\Xmax}\dfrac{\lVert \vhs \rVert_\infty}{\Delta z} + \dfrac{\Xmax M_2}{\rho_X-\Xmax}\dfrac{D_{\bC}(\Xmax)}{\Delta z^2} + \tilde{d}\dfrac{M_2}{\Delta z^2}+ M_{\bS},
\end{align*}
and the constants are given by
\begin{align*}
&M_{\bC}:=\sup_{\boldsymbol{U}\in\Omega, \atop 1\le k\le k_{\bC}}
 \left|\pp{R^{(k)}_{\bC}}{C^{(k)}} \right| , \qquad \tilde{M}_{\bC}:=\sup_{\boldsymbol{U}\in\Omega, \atop 1\le k\le k_{\bC}}
\left|\pp{\tilde{R}_{\bC}^{(k)}}{C^{(k)}}\right|, \qquad M_{\bS}:=\sup_{\boldsymbol{U} \in\Omega,  \atop
1\le k\le k_{\bS}} \left|\pp{R^{(k)}_{\bS}}{S^{(k)}} \right|, \\
%%%
&\|\xi\|_{\infty}:=\max\limits_{0\le X\le\Xmax}|\xi(X)|, \qquad\|Q\|_{\infty,T}:=\max_{0\le t\le T}\Qf(t),\quad  \tilde{d} = \max\{d^{(k)}:k = 1,\dots, k_{\bS} \},
\end{align*}
where $\xi$ represents $\vhs, \vhs'$ or $d_{\bC}$, and
\begin{align*}
  & M_1:= \underset{j=1,\dots,N}{\max}\left\{\dfrac{A_{j+1/2}}{A_j},\dfrac{A_{j-1/2}}{A_j}\right\},\quad
    M_2:= \underset{j=1,\dots,N}{\max}\left\{\dfrac{A_{j+1/2}+A_{j-1/2}}{A_j}\right\}.
\end{align*}

It is interesting to compare the eigenvalues of the flux Jacobian computed in Proposition~\ref{prop:eig} and the maximum speed given by condition~\eqref{cfl} in the case all diffusion and source terms are zero and if the area-dependent constant $M_1=1$ (corresponding to $A(z)\equiv$~constant):
\begin{align*}
\max\left|{\lambda_1}\right| 
&= \max\left|q + \gamma(z)\big(\vhs'(X)X+\vhs(X)\big)\right| 
\leq
\frac{\|\Qf\|_{\infty,T}}{A_\mathrm{min}} + \|\vhs'\|_\infty\Xmax + \vhs(0) \leq\beta_1\Delta z,\\
\max\left|{\lambda_2}\right|
&=\max\left|q-\frac{\gamma(z)f(X)}{\rho_X-X}\right|
\leq
\frac{\|\Qf\|_{\infty,T}}{A_\mathrm{min}} + \frac{\|\vhs\|_\infty\Xmax}{\rho_X-\Xmax}<\beta_2\Delta z.
\end{align*}
Note that the eigenvalues do not depend on $A(z)$, whereas the CFL condition for a numerical scheme may via $M_1$.

\subsection{Properties of the explicit numerical scheme} \label{subsec:propscheme} 

The aim is to show that the numerical solution stays in the set~$\Omega$, see~\eqref{eq:Omega}.
\begin{theorem}\label{thm}
If $\boldsymbol{U}_j^n:=(\bC_j^n,\bS_j^n,W_j^n)\in\Omega$ for all $j$, then under the condition~\eqref{cfl}, the scheme \eqref{eq:numscheme} implies $\boldsymbol{U}_j^{n+1}\in\Omega$ for all $j$.
\end{theorem}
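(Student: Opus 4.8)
The plan is to verify the three inequalities that define $\Omega$ in \eqref{eq:Omega} one at a time for the updated averages: the componentwise bound $C^{(k),n+1}_j\ge0$, the total-solids bound $X^{n+1}_j:=\sum_k C^{(k),n+1}_j\le\Xmax$, and the componentwise bound $S^{(k),n+1}_j\ge0$. Once $C^{(k),n+1}_j\ge0$ and $X^{n+1}_j\le\Xmax$ are known, the remaining constraint $C^{(k),n+1}_j\le\Xmax$ is automatic, and $W$ carries no constraint in $\Omega$. The structural fact that makes two of the three steps routine is that all interface data at time $t_n$ -- the velocities $\vfl^{X}_{j\pm1/2}$ (hence $\vhs$ and the diffusive differences $J^{\bC}_{j\pm1/2}$), the bulk velocities $q_{j\pm1/2}$, and $F^{X}_{j\pm1/2}$ -- depend only on the \emph{known} values $X^n_{j-1},X^n_j,X^n_{j+1}\in[0,\Xmax]$. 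Thus, with these coefficients frozen, the update \eqref{eq:timeC} for a single component $C^{(k)}$ is linear in $C^{(k),n}_{j-1},C^{(k),n}_j,C^{(k),n}_{j+1}$, and likewise \eqref{eq:timeS} is linear in the $S^{(k),n}$-values.

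For the positivity of $\bC$ I would read off the three coefficients of $C^{(k),n+1}_j$ from the upwind form $\bPhi^{\bC}_{j+1/2}=A_{j+1/2}(\vfl^{X,-}_{j+1/2}\bC_{j+1}+\vfl^{X,+}_{j+1/2}\bC_j)$. The coefficients of $C^{(k),n}_{j-1}$ and $C^{(k),n}_{j+1}$ are $\tfrac{\Delta t}{A_j\Delta z}A_{j-1/2}\vfl^{X,+}_{j-1/2}\ge0$ and $-\tfrac{\Delta t}{A_j\Delta z}A_{j+1/2}\vfl^{X,-}_{j+1/2}\ge0$ by the sign conventions $a^+\ge0$, $a^-\le0$, and the feed term is nonnegative. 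The reaction is absorbed by combining \eqref{eq:assumptionRC} with the mean value theorem in $C^{(k)}$, giving $R^{(k)}_{\bC}(\bC^n_j,\bS^n_j)\ge R^{(k)}_{\bC}\big|_{C^{(k)}=0}-M_{\bC}C^{(k),n}_j\ge-M_{\bC}C^{(k),n}_j$, so it only lowers the diagonal coefficient by $\Delta t\gamma_j M_{\bC}$. Using the upwind structure together with $|\vfl^{X}_{j\pm1/2}|\le|q_{j\pm1/2}|+\vhs(0)+D_{\bC}(\Xmax)/\Delta z$, the identity $A_{j+1/2}|q_{j+1/2}|\le\|\Qf\|_{\infty,T}$, and the area ratios $M_1,M_2$, a direct computation bounds the diagonal coefficient below by $1-\Delta t\beta_1\ge0$ under \eqref{cfl}; hence $C^{(k),n+1}_j\ge0$. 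The positivity of $\bS$ is analogous: $X^n_j\le\Xmax$ makes the denominators $\rho_X-X^n_j\ge\rho_X-\Xmax>0$ safe, the off-diagonal coefficients are nonnegative because $P^+_{j-1/2}/(\rho_X-X_{j-1})$, $-P^-_{j+1/2}/(\rho_X-X_{j+1})$ (with $P_{j+1/2}:=\rho_X q_{j+1/2}-F^X_{j+1/2}$) and the central-difference diffusion entries are all $\ge0$, the reaction is controlled by the corresponding sign assumption on $\bR_{\bS}$ and the bound $M_{\bS}$, and bounding $|P_{j\pm1/2}|$ and the diffusion $\tilde d\,M_2/\Delta z^2$ gives a diagonal coefficient $\ge1-\Delta t\beta_2\ge0$, so $S^{(k),n+1}_j\ge0$.

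The upper bound $X^{n+1}_j\le\Xmax$ is the main obstacle, because here the update is genuinely nonlinear -- the velocities we froze above now depend on the very quantity being bounded -- and the degenerate diffusion, the varying area and the reaction must all be reconciled with the endpoint behaviour at $\Xmax$. Summing \eqref{eq:timeC} over $k$ and using $\boldsymbol{1}^{\mathrm T}\bC=X$ collapses the convective flux sum to $A_{j+1/2}F^X_{j+1/2}$, producing a scalar update for $X_j$ with source $\gamma_j\tilde R_{\bC,j}$. I would pass to the slack variable $\Delta_j:=\Xmax-X^n_j\ge0$ and prove $\Xmax-X^{n+1}_j\ge0$ by a direct estimate. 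The two crucial inputs are $\vhs(\Xmax)=0$ from \eqref{vhsass} and $\bR_{\bC}\big|_{X=\Xmax}=0$ from \eqref{eq:tech}. Evaluating the numerical velocity at the constant state $X\equiv\Xmax$ gives $\vfl^X_{j\pm1/2}=q_{j\pm1/2}$, and the telescoping identity $A_{j+1/2}q_{j+1/2}-A_{j-1/2}q_{j-1/2}=\delta_{j,\jf}\Qf$ (from $A_{j+1/2}q_{j+1/2}=\Qu-\gamma^{\mathrm f}_{j+1/2}\Qf$) shows the constant-state flux difference equals the feed source. Subtracting this reference flux and applying the mean value theorem -- $\vhs(X_{j+1})=\vhs(X_{j+1})-\vhs(\Xmax)$ is a nonnegative multiple of $\Delta_{j+1}$ since $\vhs$ decreases, and $D_{\bC}(X_{j+1})-D_{\bC}(X_j)$ is a combination of $\Delta_{j+1},\Delta_j$ with coefficients in $[0,\|d_{\bC}\|_\infty]$ -- lets me write the flux difference as $P_{j+1/2}\Delta_{j+1}+Q_{j-1/2}\Delta_{j-1}-(Q_{j+1/2}+P_{j-1/2})\Delta_j$ with $P,Q\ge0$. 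The feed then contributes $\tfrac{\Delta t}{A_j\Delta z}\delta_{j,\jf}\Qf(\Xmax-X_{\mathrm f})\ge0$ since $X_{\mathrm f}\le\Xmax$, and the reaction is bounded by $\tilde R_{\bC,j}\le\tilde M_{\bC}\Delta_j$, obtained by integrating $\boldsymbol{1}^{\mathrm T}\nabla_{\bC}\tilde R_{\bC}$ from $\bC^n_j$ along the ray $\bC^n_j+s\boldsymbol{1}$ to the hyperplane $\{X=\Xmax\}$ on which $\tilde R_{\bC}$ vanishes. Collecting everything, $\Xmax-X^{n+1}_j$ becomes a nonnegative combination of $\Delta_{j-1}$ and $\Delta_{j+1}$ plus $\Delta_j$ times a diagonal coefficient bounded below by $1-\Delta t\beta_1\ge0$ under \eqref{cfl}; this is exactly where the $\|\vhs'\|_\infty\Xmax$, $\|d_{\bC}\|_\infty\Xmax$ and $\tilde M_{\bC}$ contributions to $\beta_1$ are consumed. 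Having established all three inequalities, $\boldsymbol{U}^{n+1}_j\in\Omega$ follows.
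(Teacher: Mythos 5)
Your proposal is correct, and its CFL accounting matches the paper's (the same constituents of $\beta_1$ and $\beta_2$ are consumed at the same places), but the route is genuinely different in its decomposition. The paper proves the two-sided bound $0\le C_j^{(k),n+1}\le\Xmax$ directly (Lemma~\ref{lem:Cjbound}) by showing that $\calH_{\bC}^{(k)}$ is nondecreasing in \emph{every} scalar argument --- including the cross-derivatives with respect to $C^{(\ell)}$, $\ell\neq k$, which act through the nonlinear velocities --- and then sandwiching the update between its values at the extreme states $\bzero$ and at dominating vectors $\bar{\bC}$ with total concentration $\Xmax$, with a separate explicit evaluation at the feed layer; the bound on $X$ (Lemma~\ref{lem:Xjbound}) is then inherited by summation. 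You invert this order: you prove only nonnegativity of each $C^{(k)}$, exploiting that once $X_{j-1}^n,X_j^n,X_{j+1}^n$ are frozen the component update is \emph{linear} with automatically nonnegative off-diagonal (upwind) coefficients; you prove $X_j^{n+1}\le\Xmax$ by comparison with the constant supersolution $X\equiv\Xmax$ in incremental (Harten-type) form, using the telescoping identity $A_{j+1/2}q_{j+1/2}-A_{j-1/2}q_{j-1/2}=\delta_{j,\jf}\Qf$ to absorb the feed, together with $\vhs(\Xmax)=0$, $X_{\mathrm{f}}\le\Xmax$, and the ray-integration bound $\tilde{R}_{\bC}\le\tilde{M}_{\bC}(\Xmax-X)$ from \eqref{eq:tech}; and you then obtain $C^{(k),n+1}_j\le\Xmax$ for free from positivity plus the $X$-bound. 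This buys a cleaner argument: the paper's $\ell\neq k$ monotonicity analysis and the dominating-state evaluation $\calH_{\bC}^{(k)}(\bar{\bC}_{j-1}^n,\bar{\bC}_j^n,\bar{\bC}_{j+1}^n)=\Xmax$ (the fiddliest step of Lemma~\ref{lem:Cjbound}, especially at $j=\jf$) disappear, and the genuinely nonlinear work concentrates in one scalar inequality rather than in $k_{\bC}$ vector ones. On the other hand, the sign information you need for your incremental coefficients $P,Q\ge0$ is exactly the flux monotonicity the paper establishes by differentiation, so no estimate is actually avoided there. Your treatment of $\bS$ coincides with the paper's Lemma~\ref{lem:Sjbound}, since there the linearity in $\bS$ is genuine.

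Two points should be made precise in a full write-up. First, your stated bound $|\vfl^{X}_{j\pm1/2}|\le|q_{j\pm1/2}|+\vhs(0)+D_{\bC}(\Xmax)/\Delta z$ must be applied one-sidedly, as your appeal to the ``upwind structure'' suggests: the diagonal loss is $A_{j+1/2}\QCphnp-A_{j-1/2}\QCmhnm$, whose bulk-flow contribution is $Q_{j+1/2}^{n,+}+(-Q_{j-1/2}^{n})^{+}\le\Qf^n$ and in which $\vhs$ enters only once (through the positive part at $z_{j+1/2}$); applying the absolute-value bound at both interfaces would instead give $|Q_{j+1/2}^n|+|Q_{j-1/2}^n|$, which equals $2(\Qf^n-\Qu^n)$ for $j<\jf$, plus $2\vhs(0)$, and this need not fit under $\beta_1$ (this is precisely the paper's sharper estimate of the term $\mathcalold{T}_2$). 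Second, your mean-value-theorem steps act on the nonsmooth maps $a\mapsto a^{\pm}$; they should be phrased as Lipschitz and monotonicity estimates of the two-point numerical flux in each argument separately, e.g.\ by splitting the increment as $(\Xmax,\Xmax)\to(X_j^n,\Xmax)\to(X_j^n,X_{j+1}^n)$, which is routine and reproduces the paper's derivative computations wherever they exist.
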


We show this by proving that each scalar right-hand side of~\eqref{finalmod} is a monotone function of the concentrations in the cells~$j-1$, $j$ and~$j+1$. 
In the proofs below we use the estimate
\begin{align}\label{eq:JCbound}
 \bigl|J^{\bC}_{j+1/2} \bigr|
=\frac{\gamma_{j+1/2}}{\Delta z}\left|\int_{X_j}^{X_{j+1}}d_{\bC}(s)\,\rmd s\right|
\leq\frac{1}{\Delta z}\int_{X_{\mathrm{c}}}^{\Xmax}d_{\bC}(s)\,\rmd s
=\frac{D_{\bC}(\Xmax)}{\Delta z}.
\end{align}
It is convenient to define 
\begin{align*}
Q_{j+1/2}^n:=A_{j+1/2}q_{j+1/2}^n=
\begin{cases}
\Qu^n-\Qf^n  &\text{if $j<j_\mathrm{f}$,} \\
\Qu^n & \text{if $j\geq j_\mathrm{f}$.} 
\end{cases}
\end{align*}

\begin{lemma}\label{lem:Cjbound}
If $\smash{(\bC_j^n,\bS_j^n,W_j^n)\in\Omega}$ for all $j$ and \eqref{cfl} holds, then $\smash{0\leq\bC_j^{n+1}\leq\Xmax}$ for all~$j$.
\end{lemma}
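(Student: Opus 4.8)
The plan is to verify directly the two constraints on $\bC$ that define $\Omega$ in \eqref{eq:Omega}, namely $0\le C_j^{(k),n+1}$ for each~$k$ and $0\le X_j^{n+1}\le\Xmax$. Since $X_j^{n+1}=\sum_k C_j^{(k),n+1}$, once all components are shown non-negative we automatically obtain $0\le X_j^{n+1}$ as well as $C_j^{(k),n+1}\le X_j^{n+1}$, so it suffices to establish (i) componentwise non-negativity and (ii) the single upper bound $X_j^{n+1}\le\Xmax$. As announced before the lemma statement, both parts are obtained by exhibiting the relevant scalar right-hand side as a monotone (non-decreasing) function of the cell values in layers $j-1$, $j$ and $j+1$ and then comparing with a suitable boundary state of the admissible interval; condition~\eqref{cfl} is precisely what guarantees monotonicity.

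For~(i) I fix a component index~$k$ and read off from \eqref{eq:timeC} and the upwind flux $\smash{\bPhi^{\bC}_{j+1/2}}$ the explicit representation $C_j^{(k),n+1}=a_{j-1}C_{j-1}^{(k),n}+a_jC_j^{(k),n}+a_{j+1}C_{j+1}^{(k),n}+(\text{feed}+\text{reaction})$, where, with the velocities frozen at~$t_n$, $\smash{a_{j+1}=-\tfrac{\Delta t}{A_j\Delta z}A_{j+1/2}\QCphm\ge0}$ and $\smash{a_{j-1}=\tfrac{\Delta t}{A_j\Delta z}A_{j-1/2}\vfl^{X,+}_{j-1/2}\ge0}$ by the definition of the positive/negative parts. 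The feed contribution $\smash{\delta_{j,\jf}\tfrac{\Delta t}{A_j\Delta z}C^{(k),n}_{\rm f}\Qf^n}$ is non-negative, and using \eqref{eq:assumptionRC} I split the reaction as $\smash{R^{(k)}_{\bC}=R^{(k)}_{\bC}\big|_{C^{(k)}=0}+C_j^{(k),n}\,\partial_{C^{(k)}}R^{(k)}_{\bC}}$ (mean value theorem, derivative at an intermediate state), the first summand being $\ge0$ and the second being absorbed into~$a_j$. It then remains to check $a_j\ge0$, i.e.\ $\smash{\tfrac{\Delta t}{A_j\Delta z}\big(A_{j+1/2}\QCphp-A_{j-1/2}\vfl^{X,-}_{j-1/2}\big)-\gamma_j\Delta t\,\partial_{C^{(k)}}R^{(k)}_{\bC}\le1}$; bounding $\smash{|A_{j\pm1/2}q_{j\pm1/2}|=|Q^n_{j\pm1/2}|\le\|\Qf\|_{\infty,T}}$, using that $\vhs$ is decreasing so $\vhs\le\vhs(0)$, the diffusive-flux estimate \eqref{eq:JCbound}, the area ratios $M_1$ and the reaction bound $M_{\bC}$, the left-hand side is dominated by $\Delta t\,\beta_1$, so \eqref{cfl} gives $a_j\ge0$ and hence $C_j^{(k),n+1}\ge0$.

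For~(ii) I sum \eqref{eq:timeC} over~$k$; since summing the components of $\smash{\bPhi^{\bC}_{j+1/2}}$ yields $\smash{A_{j+1/2}F^X_{j+1/2}}$ with $F^X_{j+1/2}$ as in \eqref{eq:fluxFX}, the total solids obey a scalar update $X_j^{n+1}=\mathcal G(X_{j-1}^n,X_j^n,X_{j+1}^n)$ carrying the cell-$j$ reaction $\smash{\gamma_j\Delta t\,\tilde R_{\bC,j}^n}$ and the feed $\smash{\delta_{j,\jf}X^n_{\rm f}\Qf^n}$. The dependence of $\smash{\vfl^X_{j\pm1/2}}$ on the neighbouring $X$-values enters only through $\vhs$ and $\smash{J^{\bC}_{j\pm1/2}}$, and the sign facts $\smash{\partial\vfl^X_{j+1/2}/\partial X_{j+1}=\gamma_{j+1/2}\big(\vhs'(X_{j+1})-d_{\bC}(X_{j+1})/\Delta z\big)\le0}$ and $\smash{\partial\vfl^X_{j+1/2}/\partial X_{j}=\gamma_{j+1/2}d_{\bC}(X_j)/\Delta z\ge0}$ (using $\vhs'\le0$ and $d_{\bC}\ge0$ from \eqref{dcprop}) show that the compression term has exactly the sign that keeps the off-diagonal derivatives $\smash{\partial\mathcal G/\partial X_{j\pm1}\ge0}$; the diagonal derivative $\smash{\partial\mathcal G/\partial X_j\ge0}$ is where \eqref{cfl} is needed, now through the $M_2/\Delta z^2$ diffusion contributions ($\|d_{\bC}\|_\infty\Xmax$ and $D_{\bC}(\Xmax)$) and the reaction bound $\tilde M_{\bC}$ in~$\beta_1$. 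Monotonicity then gives $X_j^{n+1}\le\mathcal G(\Xmax,\Xmax,\Xmax)$, and I evaluate this boundary state using $\vhs(\Xmax)=0$ from \eqref{vhsass} (so $\smash{\vfl^X_{j\pm1/2}=q_{j\pm1/2}}$), the fact that $\smash{J^{\bC}_{j\pm1/2}=0}$ when the adjacent $X$-values both equal $\Xmax$, and the vanishing reaction $\smash{\tilde R_{\bC}|_{X=\Xmax}=0}$ from \eqref{eq:tech}; a short computation yields $\smash{\mathcal G(\Xmax,\Xmax,\Xmax)=\Xmax+\delta_{j,\jf}\tfrac{\Delta t}{A_j\Delta z}\Qf^n\big(X_{\rm f}^n-\Xmax\big)\le\Xmax}$ provided the feed datum satisfies $X_{\rm f}\le\Xmax$.

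The main obstacle is the monotonicity analysis in~(ii): the strongly degenerate compression diffusion is hidden inside the advective velocity $\smash{\vfl^X_{j\pm1/2}}$ rather than written as a separate second-order term, so establishing $\smash{\partial\mathcal G/\partial X_{j\pm1}\ge0}$ and pinning down the diagonal bound requires differentiating $F^X_{j+1/2}$ through both $\vhs$ and $D_{\bC}$ and carefully tracking the resulting $1/\Delta z^2$ terms against~$\beta_1$. The secondary technical point is the reaction coupling, since the total-solids reaction $\tilde R_{\bC}$ depends on the full component vector rather than on $X_j$ alone; this is handled by the derivative bounds $M_{\bC}$ and $\tilde M_{\bC}$ together with the corner condition \eqref{eq:tech}.
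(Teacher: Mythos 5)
Your proof is correct, and it reaches the two-sided bound by a genuinely different decomposition than the paper's. The paper proves the componentwise bound $0\le C_j^{(k),n+1}\le\Xmax$ directly: it shows that the update map $\calH_{\bC}^{(k)}$ is monotone in \emph{every} component $C^{(\ell)}$, $\ell=1,\dots,k_{\bC}$, of all three neighbouring cells --- which is exactly where the terms $\mathcalold{T}_1\Xmax$, i.e.\ $M_1\|\vhs'\|_\infty\Xmax/\Delta z + M_2\|d_{\bC}\|_\infty\Xmax/\Delta z^2$, enter the diagonal estimate --- and then sandwiches $C_j^{(k),n+1}$ between $\calH_{\bC}^{(k)}(\bzero,\bzero,\bzero)$ and barrier vectors $\bar{\bC}$ that dominate componentwise and sum to $\Xmax$; the total-solids bound of Lemma~\ref{lem:Xjbound} is afterwards obtained by summation, reusing that proof. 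You instead prove (i) componentwise non-negativity by freezing the upwind velocities at $t_n$, so the scheme is literally affine in the $k$-th component with $a_{j\pm1}\ge 0$ unconditionally and $a_j\ge 0$ under \eqref{cfl}; this is leaner than the paper's route for positivity, since it needs neither the derivatives of $\vfl^{X}_{j\pm1/2}$ with respect to the middle concentrations nor the $\mathcalold{T}_1\Xmax$ contribution, only the $\mathcalold{T}_2$-type flux bound plus $M_{\bC}$, all of which sit inside $\beta_1$; and (ii) $X_j^{n+1}\le\Xmax$ by the monotone analysis of the summed scalar update (the content of Lemma~\ref{lem:Xjbound}, proved here without circularity), after which $C_j^{(k),n+1}\le X_j^{n+1}\le\Xmax$ follows from non-negativity of the remaining components. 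The price is that all the heavy monotonicity (velocity derivatives, barrier evaluation via \eqref{vhsass}, $J^{\bC}_{j\pm1/2}=0$ at the barrier, and \eqref{eq:tech}) is concentrated in the $X$-update, where you correctly flag --- and the paper handles identically via $\tilde{M}_{\bC}$ --- that $\tilde{R}_{\bC}$ depends on the full vector $\bC_j$, so the comparison state must strictly be a vector barrier dominating $\bC_j^n$ componentwise with sum $\Xmax$ rather than a function of $X_j$ alone; your feed-cell evaluation $\Xmax+\frac{\Delta t}{A_{\jf}\Delta z}\Qf(X_{\mathrm{f}}-\Xmax)\le\Xmax$ matches the paper's, under the physical assumption $X_{\mathrm{f}}\le\Xmax$ that the paper also uses implicitly. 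Net effect: your organization proves Lemmas~\ref{lem:Cjbound} and~\ref{lem:Xjbound} in one pass and weakens what positivity requires, while the paper's version buys full monotonicity of $\calH_{\bC}^{(k)}$ in every argument, which it then reuses verbatim for Lemma~\ref{lem:Xjbound}.
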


\begin{proof}
We denote by~$\smash{\calH_{\bC}^{(k)}(\bC_{j-1}^{n},\bC_j^{n},\bC_{j+1}^{n})}$ the 
right-hand side of component~$k\in\{1,\ldots,k_{\bC}\}$ of~\eqref{eq:timeC}. 
We  show that $\smash{\calH_{\bC}^{(k)}}$ is a monotone function of each of its arguments by proving that
\begin{align}\label{eq:HCder}
\pp{\calH_{\bC}^{(k)}}{C_{j-1}^{(\ell),n}}\geq 0, \quad
\pp{\calH_{\bC}^{(k)}}{C_{j}^{(\ell),n}}\geq 0, \quad
\pp{\calH_{\bC}^{(k)}}{C_{j+1}^{(\ell),n}}\geq 0, \quad\ell=1,\ldots,k_{\bC}.
\end{align}
We  start with the most complicated case $\ell=k$.
The case $\ell\neq k$ will only have fewer terms in the estimations that will follow.
To avoid too heavy notation, we write~$\smash{C_j^{n}}$ instead of~$\smash{C_j^{(k),n}}$, etc.
With this convention, we first write out the following expression (of component $k$) of~\eqref{eq:timeC}:
\begin{align*}
[\Delta\Phi^{\bC}]^{n}_j & = \Delta\Phi_{C,j+1/2}^{n} - \Delta\Phi_{C,j-1/2}^{n}\\
& = A_{j+1/2}\bigl(\QCphnm{C_{j+1}^{n}} + \QCphnp{C_{j}^{n}}\bigr) - A_{j-1/2}\bigl(\QCmhnm{C_{j}^{n}} + \QCmhnp{C_{j-1}^{n}}\bigr)\\
&= A_{j+1/2}\QCphnm{C_{j+1}^{n}} + \bigl(A_{j+1/2}\QCphnp - A_{j-1/2}\QCmhnm\bigr){C_{j}^{n}} - A_{j-1/2}\QCmhnp{C_{j-1}^{n}}.
\end{align*}
We   use the shorter notation \begin{align}\label{eq:mp}
m_{\bC,j+1/2}^n&:=\chi_{\{\QCphn \leq 0\}} (A\gamma)_{j+1/2},\qquad
p_{\bC,j+1/2}^n:= \chi_{\{ \QCphn  \geq 0 \}} (A\gamma)_{j+1/2},
\end{align}
so that $\smash{m_{\bC,j+1/2}^n+p_{\bC,j+1/2}^n=(A\gamma)_{j+1/2}}$.
We calculate
\begin{align*}
A_{j+1/2}\pp{\QCphnm}{C_{j}^{n}} 
& = A_{j+1/2}\pp{}{C_{j}^{n}}\min \bigl\{\QCphn,0 \bigr\} 
= A_{j+1/2}\chi_{ \{   \QCphn \leq 0 \} } \pp{\QCphn}{C_{j}^{n}} \\
& = m_{\bC,j+1/2}^n\biggl( - \pp{J^{\bC}_{j+1/2}}{X_{j}^{n}}\biggr)
 = m_{\bC,j+1/2}^n\frac{D_{\bC}'(X_j^{n})}{\Delta z}
= m_{\bC,j+1/2}^n\frac{d_{\bC}(X_j^{n})}{\Delta z}\geq 0,  \\
A_{j+1/2}\pp{\QCphnm}{C_{j+1}^{n}}
& =  \chi_{\{ \QCphn  \leq 0\}} A_{j+1/2} \gamma_{j+1/2}\biggl(\vhs'(X_{j+1}^{n}) - \pp{J^{\bC}_{j+1/2}}{X_{j+1}^{n}}\biggr)\\
& = m_{\bC,j+1/2}^n
\biggl(\vhs'(X_{j+1}^{n})-\frac{d_{\bC}(X_{j+1}^{n})}{\Delta z}\biggr)\leq 0.
\end{align*}
Similarly, we get
\begin{align*}
A_{j+1/2}\pp{\QCphnp}{C_{j}^{n}} 
& = A_{j+1/2}\pp{}{C_{j}^{n}}\max \bigl\{\QCphn,0 \bigr\} 
 = p_{\bC,j+1/2}^n\frac{d_{\bC}(X_j^{n})}{\Delta z}\geq 0,\\
A_{j+1/2}\pp{\QCphnp}{C_{j+1}^{n}}
& = p_{\bC,j+1/2}^n
\biggl(\vhs'(X_{j+1}^{n}) - \frac{d_{\bC}(X_{j+1}^{n})}{\Delta z}\biggr)\leq 0.
\end{align*}
Now we differentiate $\calH_{\bC}$ to obtain
\begin{align*}
\pp{\mathcalold{H}_{\bC}}{C_{j-1}^{n}} 
& = -\dfrac{\Delta t}{A_j\Delta z}\pp{[\Delta\Phi^{\bC}]^{n}_j}{C_{j-1}^{n}} = -\dfrac{A_{j-1/2} \Delta t }{A_j\Delta z}
\biggl(
- \pp{\QCmhnm}{C_{j-1}^{n}}C_j^n - \pp{\QCmhnp}{C_{j-1}^{n}}C_{j-1}^{n} - \QCmhnp
\biggr)\geq 0,\\
\pp{\mathcalold{H}_{\bC}}{C_{j+1}^{n}} 
& = -\dfrac{\Delta t}{A_j\Delta z}\pp{[\Delta\Phi^{\bC}]^{n}_j}{C_{j+1}^{n}} = -\dfrac{A_{j+1/2} \Delta t}{A_j\Delta z}
\biggl(
\pp{\QCphnm}{C_{j+1}^{n}}C_{j+1}^{n} + \QCphnm + \pp{\QCphnp}{C_{j+1}^{n}}C_j^n
\biggr)\geq 0.
\end{align*}
With the help of the signs of the derivatives above, we estimate
\begin{align*}
\pp{[\Delta\Phi^{\bC}]^{n}_j}{C_{j}^{n}}
 &=  A_{j+1/2}\pp{\QCphnm}{C_{j}^{n}}{C_{j+1}^{n}} + 
\biggl(A_{j+1/2}\pp{\QCphnp}{C_{j}^{n}} - A_{j-1/2}\pp{\QCmhnm}{C_{j}^{n}}\biggr){C_{j}^{n}} \\
& \qquad + A_{j+1/2}\QCphnp - A_{j-1/2}\QCmhnm - A_{j-1/2}\pp{\QCmhnp}{C_{j}^{n}}{C_{j-1}^{n}}\\
& \leq 
\biggl(
 A_{j+1/2}\pp{\QCphnm}{C_{j}^{n}} +  A_{j+1/2}\pp{\QCphnp}{C_{j}^{n}} -
 A_{j-1/2}\pp{\QCmhnm}{C_{j}^{n}} -  A_{j-1/2}\pp{\QCmhnp}{C_{j}^{n}}
\biggr)\Xmax \\
&\qquad +  A_{j+1/2}\QCphnp -  A_{j-1/2}\QCmhnm\\
&=:\mathcalold{T}_1\Xmax + \mathcalold{T}_2,
\end{align*}
where we estimate
\begin{align*}
\mathcalold{T}_1 &= (A\gamma)_{j+1/2}\frac{d_{\bC}(X_j^{n})}{\Delta z} 
-(A\gamma)_{j-1/2}\left(\vhs'(X_{j}^{n}) - \frac{d_{\bC}(X_{j}^{n})}{\Delta z}
\right)\\
&\leq
A_j\left(
M_1\|\vhs'\|_\infty + M_2\frac{d_{\bC}(X_j^{n})}{\Delta z} 
\right)
\leq
A_j\left(
M_1\|\vhs'\|_\infty  + M_2\frac{\|d_{\bC}\|_\infty}{\Delta z}\right).
\end{align*}
For the term $\mathcalold{T}_2$, we use that $-a^-=(-a)^+$ and $(a+b)^+\leq a^++b^+$, so that $-(a+b)^-\leq -a^--b^-$, and~\eqref{eq:JCbound} to obtain
\begin{align*}
\mathcalold{T}_2& =  A_{j+1/2} \QCphnp -  A_{j-1/2}\QCmhnm\\
&\leq 
A_{j+1/2}\bigl(q_{j+1/2}^{n,+} + \gamma_{j+1/2}\bigl(\vhs(X_{j+1}^{n}) + \big(-J^{C,n}_{j+1/2}\big)^+\bigr)\bigr) 
- A_{j-1/2}\bigl(q_{j-1/2}^{n,-} 
+ (\gamma)_{j-1/2}\bigl(-J^{C,n}_{j-1/2}\bigr)^-\bigr)\\
& \leq
Q_{j+1/2}^{n,+} - Q_{j-1/2}^{n,-} + (A\gamma)_{j+1/2}\bigl(\vhs(0) + \big(-J^{C,n}_{j+1/2}\big)^+\bigr) + (A\gamma)_{j-1/2}J^{C,n,+}_{j-1/2} \\
&\leq
\Qu^n+\Qe^n + A_{j+1/2}\vhs(0) 
+ (A_{j+1/2} + A_{j-1/2})\frac{D_{\bC}(\Xmax)}{\Delta z} \\
&\leq 
A_j\biggl(
\frac{\|\Qf\|_{\infty,T}}{A_\mathrm{min}} +
M_1\vhs(0) + M_2\frac{D_{\bC}(\Xmax)}{\Delta z} 
\biggr).
\end{align*}
The condition~\eqref{cfl} now implies 
\begin{align*}
\pp{\mathcalold{H}_{\bC}}{C_j^{n}} 
& = 1-\dfrac{\Delta t}{A_j\Delta z}\pp{[\Delta\Phi^{\bC}]^{n}_j}{C_{j}^{n}} +\Delta t\,\gamma_j\pp{R_{\bC,j}^{n}}{C_{j}^{n}}\\
& \geq 1-\Delta t\biggl(\biggl(\frac{M_1\|\vhs'\|_\infty}{\Delta z}  + M_2\frac{\|d_{\bC}\|_\infty}{\Delta z^2}\biggr)\Xmax \\
& \qquad + \frac{\|\Qf\|_{\infty,T}}{A_\mathrm{min}\Delta z} +
\frac{M_1\vhs(0)}{\Delta z}
+ M_2\frac{D_{\bC}(\Xmax)}{\Delta z^2}  + {M}_{\bC}\biggr)\geq 0.
\end{align*}
The derivatives~\eqref{eq:HCder} in the case $\ell\neq k$ are obtained as above; however, with $\mathcalold{T}_2\equiv 0$.
For a given vector~${\bC_j^n}$ with ${X_j^n=C_j^{(1),n}+\cdots+C_j^{(k_{\bC}),n}\leq\Xmax}$  we let ${\bar{\bC}_j^n}$ denote any vector that satisfies ${C_j^{(k),n}\leq\bar{C}_j^{(k),n}}$, $k=1,\ldots,k_{\bC}$, and ${\bar{C}_j^{(1),n}+\cdots +\bar{C}_j^{(k_{\bC}),n}}=\Xmax$.
The monotonicity in each variable of ${\calH_{\bC}^{(k)}}$ and the assumptions~\eqref{eq:assumptionRC} and \eqref{eq:tech} are now used to obtain, for $j\neq j_{\rm f}$, 
\begin{align*}
 0 =   \mathcalold{H}_{\bC}^{(k)}(\bzero,\bzero,\bzero) \leq C_j^{n+1} 
   =   \mathcalold{H}_{\bC}^{(k)} \bigl(\bC_{j-1}^n,\bC_j^n,\bC_{j+1}^n \bigr) 
   \le \mathcalold{H}_{\bC}^{(k)} \bigl(\bar{\bC}_{j-1}^n,\bar{\bC}_j^n,\bar{\bC}_{j+1}^n \bigr) =\Xmax,
\end{align*}
and for the case when $j=\jf$, we have
\begin{align*}
0&\le\dfrac{\Delta t}{A_{j_\mathrm{f}}\Delta z}\,{C_\mathrm{f}\Qf} = \calH_{\bC}^{(k)}(\bzero,\bzero,\bzero)
\le C_j^{(k),n+1}
=\calH_{\bC}^{(k)} \bigl(\bC_{j_\mathrm{f}-1}^{n},\bC_{j_\mathrm{f}}^{n},
\bC_{j_\mathrm{f}+1}^{n} \bigr)\\
&\le\calH_{\bC}^{(k)}
\bigl(\bar{\bC}_{j-1}^n,\bar{\bC}_j^n,\bar{\bC}_{j+1}^n \bigr)
=\Xmax-\dfrac{\Delta t}{A_{j_\mathrm{f}}\Delta z}\big(\Qu\Xmax-(\Qu-\Qf)\Xmax-{C_\mathrm{f}\Qf}\big)\\
&=\Xmax-\dfrac{\Delta t}{A_{j_\mathrm{f}} \Delta z}\Qf(\Xmax-C_\mathrm{f})\le\Xmax,
\end{align*}
which proves the bound of~$\smash{C_j^{(k),n}}$.
\end{proof}

\begin{lemma}\label{lem:Xjbound}
If $\smash{(\bC_j^n,\bS_j^n,W_j^n)\in\Omega}$ for all~$j$ and \eqref{cfl} holds, then $\smash{0\leq X_j^{n+1}\leq\Xmax}$ for all~$j$.
\end{lemma}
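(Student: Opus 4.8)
The plan is to obtain the two-sided bound on $X_j^{n+1}=C^{(1),n+1}_j+\cdots+C^{(k_{\bC}),n+1}_j$ from the \emph{summed} scheme, reusing the machinery of Lemma~\ref{lem:Cjbound}. The lower bound is free: that lemma already gives $C^{(k),n+1}_j\geq 0$ for every $k$, so $X_j^{n+1}\geq 0$ by summation. The upper bound is the real content, and it does \emph{not} follow from the componentwise estimates $C^{(k),n+1}_j\leq\Xmax$, which only yield the useless $X_j^{n+1}\leq k_{\bC}\Xmax$; a dedicated argument exploiting the additive structure is needed.

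First I would add the $k_{\bC}$ equations in \eqref{eq:timeC} and write $\calH_X$ for the resulting right-hand side, so that $X_j^{n+1}=\calH_X$. The point is that summation collapses the vector flux: the components of $\bPhi^{\bC}_{j+1/2}$ add up to $A_{j+1/2}F^X_{j+1/2}$ with $F^X_{j+1/2}=\QCphm X_{j+1}+\QCphp X_j$ from \eqref{eq:fluxFX}, the feed terms add up to $\delta_{j,\jf}X_{\mathrm{f}}^n\Qf^n$, and the reaction terms add up to $\gamma_jA_j\Delta z\,\tilde{R}_{\bC,j}^n$. Thus, viewed as a function of the nodal concentrations in cells $j-1$, $j$, $j+1$ (through $X_{j-1}^n,X_j^n,X_{j+1}^n$ in the fluxes and through the full vectors in $\tilde{R}_{\bC}$), $\calH_X$ has exactly the same structure as the scalar update analysed in Lemma~\ref{lem:Cjbound}, with the scalar total-solids flux $A_{j+1/2}F^X_{j+1/2}$ in place of the per-species flux and the lumped reaction $\tilde{R}_{\bC}$ in place of $R^{(k)}_{\bC}$. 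I would therefore repeat the sign computations of Lemma~\ref{lem:Cjbound} to conclude that $\calH_X$ is non-decreasing in each nodal concentration under \eqref{cfl}; the only change is that the diagonal term now picks up $\partial\tilde{R}_{\bC}/\partial C^{(\ell)}_j$, which is precisely why $\beta_1$ carries the bound $\tilde{M}_{\bC}$.

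With monotonicity established, I would run the comparison argument from Lemma~\ref{lem:Cjbound}: replacing each vector $\bC_{j'}^n$ by a componentwise-larger $\bar{\bC}_{j'}^n$ whose entries sum to $\Xmax$ gives $X_j^{n+1}=\calH_X\leq\calH_X\big|_{X\equiv\Xmax}$, and it remains to evaluate $\calH_X$ at the constant maximal state. This is where the structural assumptions pay off: by \eqref{vhsass} and \eqref{dcprop} one has $\vhs(\Xmax)=0$ and $d_{\bC}(\Xmax)=0$, hence $J^{\bC}_{j\pm1/2}=\bigl(D_{\bC}(\Xmax)-D_{\bC}(\Xmax)\bigr)/\Delta z=0$ and $\vfl^X_{j\pm1/2}=q_{j\pm1/2}$, so that $A_{j\pm1/2}F^X_{j\pm1/2}=Q^n_{j\pm1/2}\Xmax$; and by \eqref{eq:tech} the lumped reaction $\tilde{R}_{\bC}$ vanishes at $X=\Xmax$. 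Splitting on $j$, for $j\neq\jf$ the flow values $Q^n_{j+1/2}$ and $Q^n_{j-1/2}$ coincide, the telescoped flux difference vanishes, and $\calH_X=\Xmax$; for the feed layer $j=\jf$ one has $Q^n_{\jf+1/2}-Q^n_{\jf-1/2}=\Qf^n$ together with the feed source, giving
\[
\calH_X=\Xmax-\frac{\Delta t}{A_{\jf}\Delta z}\Qf^n(\Xmax-X_{\mathrm{f}}^n)\leq\Xmax,
\]
since $X_{\mathrm{f}}\leq\Xmax$. The main obstacle is conceptual rather than computational: recognizing that the object to analyse is the summed flux $A_{j+1/2}F^X_{j+1/2}$ — a genuine scalar upwind flux — and verifying that the maximal constant state $X\equiv\Xmax$ is, up to the feed, a fixed point of the scheme, which relies entirely on the degeneracy \eqref{dcprop}, the hindered-settling condition \eqref{vhsass}, and the technical assumptions \eqref{eq:tech}.
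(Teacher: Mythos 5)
Your proposal is correct and follows essentially the same route as the paper: the paper's proof likewise sums the components of \eqref{eq:timeC} to obtain a scalar update for $X_j^n$ with flux $A_{j+1/2}F^X_{j+1/2}$, and then simply invokes the proof of Lemma~\ref{lem:Cjbound} with $C_j^{(k),n}$ replaced by $X_j^n$, $R_{\bC}^{(k)}$ by $\tilde{R}_{\bC}$, and $M_{\bC}$ by $\tilde{M}_{\bC}$ (which is exactly why $\beta_1$ contains $\max\{M_{\bC},\tilde{M}_{\bC}\}$). You merely spell out the details the paper leaves implicit — the monotonicity check and the evaluation at the maximal state $X\equiv\Xmax$, including the feed-layer bound via $X_{\mathrm{f}}\leq\Xmax$ — which matches the paper's argument step for step.
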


\begin{proof}
Summing all components of~\eqref{eq:timeC}  yields the   update formula 
\begin{align}\label{eq:temp}
 X_j^{n+1} & = X_j^n + \dfrac{\Delta t}{A_j\Delta z}\bigl(-[\Delta\Psi]^{n}_j + \delta_{j,j_{\mathrm{f}}}X_{\rm f}^{n} Q_{\rm f}^n+ \gamma_jA_j\Delta z \tilde{R}_{\bC,j}^{n}\bigr)
\end{align}
or the total solids concentration~$X$, where
\begin{align*}
\Psi^{n}_j:=A_{j+1/2}\QCphm X_{j+1} + A_{j+1/2}\QCphp X_{j}.
\end{align*}
Since~\eqref{eq:temp} is similar to one component of~\eqref{eq:timeC}, this lemma can be proved 
 by following  the proof of Lemma~\ref{lem:Cjbound} with $\smash{C_j^{(k),n}}$ replaced by $\smash{X_j^n}$, 
  $\smash{{R}_{\bC,j}^{(k),n}}$ replaced by $\smash{\tilde{R}_{\bC,j}^{(k),n}}$, and hence, $\smash{M_{\bC}}$ replaced by~$\smash{\tilde{M}_{\bC}}$.
\end{proof}

\begin{lemma}\label{lem:Sjbound}
If $\smash{(\bC_j^n,\bS_j^n,W_j^n)\in\Omega}$ for all $j$ and \eqref{cfl} holds, then 
\begin{align*} 
 S_j^{(1),n+1} \geq 0 , \dots, S_j^{(k_{\boldsymbol{S}}),n+1} \geq 0 \quad \text{for all $j$}.
 \end{align*} 
\end{lemma}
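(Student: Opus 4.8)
The plan is to follow verbatim the monotonicity argument of Lemmas~\ref{lem:Cjbound} and~\ref{lem:Xjbound}, exploiting a structural simplification that is special to the soluble equations. First I would write the right-hand side of the $k$-th component of~\eqref{eq:timeS} as a function $\calH_{\bS}^{(k)}$ of the neighbouring values $S_{j-1}^{(k),n}$, $S_{j}^{(k),n}$, $S_{j+1}^{(k),n}$, treating the solids concentrations $X_{j-1}^n,X_j^n,X_{j+1}^n$ (already confined to $[0,\Xmax]$ by Lemma~\ref{lem:Xjbound}) and the bulk velocities $q_{j\pm1/2}^n$ as fixed, known quantities. The crucial observation is that, in contrast to $\bPhi_{\bC}$, the convective--diffusive flux $\bPhi_{\bS}$ in~\eqref{eq:PhiS} acts component-wise on $\bS$: the scalar prefactors $(\rho_Xq_{j+1/2}-F^X_{j+1/2})^{\pm}$, the denominators $\rho_X-X$, and the diagonal matrix $\mathbcal{D}$ couple $S^{(k)}$ only to the \emph{same} component in the adjacent cells. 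Hence the soluble components decouple except through the reaction term, and it suffices to prove that $\calH_{\bS}^{(k)}$ is nondecreasing in each of the three arguments $S_{j-1}^{(k),n},S_{j}^{(k),n},S_{j+1}^{(k),n}$.

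For the off-diagonal derivatives I would, in analogy with~\eqref{eq:mp}, split $G_{j+1/2}^n:=\rho_Xq_{j+1/2}^n-F^X_{j+1/2}$ into $G^{+}\ge0$ (upwind from cell~$j$) and $G^{-}\le0$ (upwind from cell~$j+1$). Then $\partial\calH_{\bS}^{(k)}/\partial S_{j+1}^{(k),n}$ combines the convective piece $-\tfrac{\Delta t}{A_j\Delta z}A_{j+1/2}G_{j+1/2}^{n,-}/(\rho_X-X_{j+1}^n)\ge0$ with the diffusive piece $\tfrac{\Delta t}{A_j\Delta z}\gamma_{j+1/2}A_{j+1/2}d^{(k)}/\Delta z\ge0$, and similarly for $\partial\calH_{\bS}^{(k)}/\partial S_{j-1}^{(k),n}$; both inequalities are immediate once $\rho_X-X>0$. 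The real work is the diagonal derivative
\begin{align*}
\pp{\calH_{\bS}^{(k)}}{S_j^{(k),n}}
&= 1 - \frac{\Delta t}{A_j\Delta z}\left(\frac{A_{j+1/2}G_{j+1/2}^{n,+}-A_{j-1/2}G_{j-1/2}^{n,-}}{\rho_X-X_j^n} + \bigl((A\gamma)_{j+1/2}+(A\gamma)_{j-1/2}\bigr)\frac{d^{(k)}}{\Delta z}\right) \\
&\quad + \Delta t\,\gamma_j\pp{R_{\bS}^{(k)}}{S_j^{(k)}},
\end{align*}
which I must bound below by $1-\Delta t\beta_2\ge0$.

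The main obstacle, and the only genuinely new estimate compared with Lemma~\ref{lem:Cjbound}, is controlling $G_{j+1/2}^{n}/(\rho_X-X_j^n)$. I would bound the denominator below by $\rho_X-\Xmax>0$ (legitimate by Lemma~\ref{lem:Xjbound}) and estimate the numerator through the three constituents of $F^X_{j+1/2}=(\vfl_XX)_{j+1/2}$ built from $\vfl^X_{j+1/2}=q_{j+1/2}+\gamma_{j+1/2}(\vhs(X_{j+1})-J^{\bC}_{j+1/2})$: the part of $G$ involving $q$ is bounded by $(\rho_X+\Xmax)|q_{j+1/2}|$, which together with $A_{j+1/2}|q_{j+1/2}|\le\|\Qf\|_{\infty,T}$ yields the term $\tfrac{\rho_X+\Xmax}{\rho_X-\Xmax}\tfrac{\|\Qf\|_{\infty,T}}{A_{\mathrm{min}}\Delta z}$; the $\vhs$-part contributes $\tfrac{\Xmax M_1}{\rho_X-\Xmax}\tfrac{\|\vhs\|_\infty}{\Delta z}$; and the compression part, bounded via the estimate~\eqref{eq:JCbound} on $J^{\bC}_{j+1/2}$, contributes $\tfrac{\Xmax M_2}{\rho_X-\Xmax}\tfrac{D_{\bC}(\Xmax)}{\Delta z^2}$. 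The diffusive half-contributions add $\tilde d\,M_2/\Delta z^2$, and the reaction derivative is absorbed by $M_{\bS}$; summing these is precisely $\beta_2$, so~\eqref{cfl} forces the diagonal derivative to be nonnegative. Finally, with monotonicity in all three arguments established, I would evaluate $\calH_{\bS}^{(k)}$ at $S_{j-1}^{(k),n}=S_j^{(k),n}=S_{j+1}^{(k),n}=0$: every convective and diffusive term then vanishes, the feed contribution $\delta_{j,\jf}S_{\mathrm f}^{(k),n}Q_{\mathrm f}^n$ is nonnegative, and the surviving reaction term $\gamma_jR_{\bS}^{(k)}(\bC_j^n,\bS_j^n)\big|_{S_j^{(k),n}=0}$ is nonnegative by the sign assumption on the soluble reaction rates (namely $R_{\bS}^{(k)}\ge0$ whenever $S^{(k)}=0$, the companion of~\eqref{eq:assumptionRC}). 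Hence $S_j^{(k),n+1}=\calH_{\bS}^{(k)}(\dots)\ge\calH_{\bS}^{(k)}(\dots,0,0,0,\dots)\ge0$, as claimed.
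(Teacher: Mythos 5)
Your proposal is correct and follows the paper's proof almost step for step: the same monotone-scheme framework, the same flux splitting (your $G_{j+1/2}$ is exactly the paper's $F^{L,n}_{j+1/2}:=\rho_X q_{j+1/2}-F^{X,n}_{j+1/2}$, with $\tilde{X}^n_j:=\rho_X-X^n_j$ in the denominators), the same signs for the off-diagonal derivatives, and the same assembly of the diagonal estimate into precisely $\beta_2$ (bulk-flow term via $\|\Qf\|_{\infty,T}/A_{\mathrm{min}}$, hindered-settling term with $M_1$, compression term controlled by \eqref{eq:JCbound}, diffusion term with $\tilde d\,M_2$, and $M_{\bS}$ for the reactions). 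The one genuine deviation is the final positivity step. You zero only the $k$-th component in the three cells, keeping the other soluble components at their actual values, and invoke the componentwise condition $R^{(k)}_{\bS}\bigl|_{S^{(k)}=0}\bigr.\ge 0$; the paper instead evaluates at the full zero vectors, $0=\calH^{(k)}_{\bS}(\bzero,\bzero,\bzero)\le\calH^{(k)}_{\bS}(\bS^n_{j-1},\bS^n_j,\bS^n_{j+1})=S^{(k),n+1}_j$, which uses only the stated assumption $\bR_{\bS}(\bC,\bzero)\ge\bzero$ but implicitly requires monotonicity of $\calH^{(k)}_{\bS}$ in the off-components $S^{(\ell)}$, $\ell\neq k$, as well (the paper asserts this for all $\ell$ but verifies only $\ell=k$; for $\ell\neq k$ the flux terms vanish and what survives is $\Delta t\,\gamma_j\,\partial R^{(k)}_{\bS}/\partial S^{(\ell)}_j$, whose sign the stated hypotheses do not control --- indeed in the paper's own denitrification example $\partial R^{(1)}_{\bS}/\partial S^{(2)}\le 0$). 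Your route sidesteps that cross-component monotonicity entirely, which is arguably cleaner, but at the price of a componentwise sign assumption that the paper states only for $\bR_{\bC}$ in \eqref{eq:assumptionRC}, not for $\bR_{\bS}$; you should state explicitly that you are imposing this (it does hold for the reaction rates used in the numerical examples, where $R^{(k)}_{\bS}$ is proportional to $\mu(\bS)$ factors vanishing at $S^{(k)}=0$ or is a nonnegative decay term). One small slip: the bounds $0\le X^n_j\le\Xmax$ used to get $\rho_X-X^n_j\ge\rho_X-\Xmax>0$ come directly from the induction hypothesis $(\bC^n_j,\bS^n_j,W^n_j)\in\Omega$, not from Lemma~\ref{lem:Xjbound}, which concerns time level $n+1$.
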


\begin{proof} Let us denote by~$\smash{\calH_{\bS}^{(k)}(\bS_{j-1}^{n},\bS_j^{n},\bS_{j+1}^{n})}$ component $k\in\{1,\ldots,k_{\bS}\}$ of the right-hand side of~\eqref{eq:timeS}.
To show that $\smash{\calH_{\bS}^{(k)}}$ is a monotone function of  each scalar argument we prove  
\begin{align*}
&\pp{\calH_{\bS}^{(k)}}{S_{i}^{(\ell),n}}\geq 0, \quad i=j-1,j,j+1,\quad \ell=1,\ldots,k_{\bS}.
\end{align*}
We start with $\ell=k$, 
  do not write out the superscript $(k)$ and define $\smash{m_{\bS,j+1/2}^n}$ and $\smash{p_{\bS,j+1/2}^n}$ in analogy with~\eqref{eq:mp}.
 We introduce $\smash{\tilde{X}^n_j :=  \rho_X - X^n_j}>0$ and the flux
\begin{align*}
F^{L,n}_{j+1/2}(X_j,X_{j+1}):= \rho_Xq_{j+1/2}-F^{X,n}_{j+1/2}.
\end{align*}
Component $k$ of~\eqref{eq:timeS} contains the  expression 
\begin{align*}
[\Delta\Phi^{\bS}]^{n}_j &= A_{j+1/2}F^{L,n,-}_{j+1/2}\dfrac{S_{j+1}^{n}}{\tilde{X}^n_{j+1}} + \bigl(A_{j+1/2}F^{L,n,+}_{j+1/2} - A_{j-1/2}F^{L,n,-}_{j-1/2} \bigr)\dfrac{S_{j}^{n}}{\tilde{X}^n_j} - A_{j-1/2}F^{L,n,+}_{j-1/2}\dfrac{S_{j-1}^{n}}{\tilde{X}^n_{j-1}}\\
&\qquad - d\biggl( (A\gamma)_{j+1/2}\dfrac{S^n_{j+1}-S^n_j}{\Delta z} - (A\gamma)_{j-1/2}\dfrac{S^n_{j}-S^n_{j-1}}{\Delta z}\biggr). 
\end{align*}
Since $\smash{F^{L,n}_{j+1/2}}$ does not depend on $\bS$, we obtain
\begin{align*}
\pp{\mathcalold{H}_{\bS}}{S_{j-1}^{n}} 
& = -\dfrac{\Delta t}{A_j\Delta z}\pp{[\Delta\Phi^{\bS}]^{n}_j}{S_{j-1}^{n}} = \dfrac{A_{j-1/2}\Delta t}{A_j\Delta z} \dfrac{F^{L,n,+}_{j-1/2}}{\tilde{X}^n_{j-1}} 
+d\dfrac{\Delta t (A\gamma)_{j-1/2}}{A_j\Delta z^2} 
\geq 0,\\
\pp{\mathcalold{H}_{\bS}}{S_{j+1}^{n}} 
& = -\dfrac{\Delta t}{A_j\Delta z}\pp{[\Delta\Phi^{\bS}]^{n}_j}{S_{j+1}^{n}} = -\dfrac{A_{j+1/2}\Delta t}{A_j\Delta z} \dfrac{F^{L,n,-}_{j+1/2}}{\tilde{X}^n_{j+1}} 
+d\dfrac{\Delta t (A\gamma)_{j+1/2}}{A_j\Delta z^2} 
\geq 0.
\end{align*}
Now we estimate the following, using $(a+b)^+\leq a^++b^+$, $-(a+b)^-\leq (-a)^++(-b)^+$:
\begin{align*}
\dfrac{\partial[\Delta\Phi^{\bS}]^{n}_j}{\partial S_j^n} &= A_{j+1/2}\dfrac{F^{L,n,+}_{j+1/2}}{\tilde{X}^n_j} - A_{j-1/2}\dfrac{F^{L,n,-}_{j-1/2}}{\tilde{X}^n_j} +
d\dfrac{(A\gamma)_{j+1/2}+(A\gamma)_{j-1/2}}{\Delta z}\\
&\leq \dfrac{A_{j+1/2}\rho_X}{\tilde{X}_j^n}q^{n,+}_{j+1/2} +  \dfrac{A_{j-1/2}\rho_X}{\tilde{X}_j^n}(-q^n_{j-1/2})^+ 
+ \dfrac{A_{j+1/2}X_{j+1}^n}{\tilde{X}_j^n}\bigl( (-q^n_{j+1/2})^+ + \gamma_{j+1/2}J_{j+1/2}^{C,n,+}\bigr)\\
&\qquad + \dfrac{A_{j-1/2}X_{j}^n}{\tilde{X}_j^n}\bigl( q^{n,+}_{j-1/2}+ \gamma_{j-1/2}(\vhs(X^n_j))^++\gamma_{j-1/2}(-J_{j+1/2}^{C,n})^+\bigr)
+d A_j \dfrac{M_2}{\Delta z}\\
&\leq \dfrac{\rho_X}{\rho_X-\Xmax}\bigl(Q^{n,+}_{j+1/2}+(-Q^{n}_{j-1/2})^+\bigr) 
+\dfrac{\Xmax}{\rho_X-\Xmax}\bigl((-Q^n_{j+1/2})^+ +  Q^{n,+}_{j-1/2}\bigr) \\
&\qquad + A_j\left(\dfrac{\Xmax M_1}{\rho_X-\Xmax}\lVert \vhs \rVert_\infty + \dfrac{\Xmax M_2}{\rho_X-\Xmax}\dfrac{D_{\bC}(\Xmax)}{\Delta z} + d\dfrac{M_2}{\Delta z}\right)\\
&\leq A_j\left(\dfrac{\rho_X+\Xmax}{\rho_X-\Xmax} \frac{\|\Qf\|_{\infty,T}}{A_\mathrm{min}} +\dfrac{\Xmax M_1}{\rho_X-\Xmax}\lVert \vhs \rVert_\infty + \dfrac{\Xmax M_2}{\rho_X-\Xmax}\dfrac{D_{\bC}(\Xmax)}{\Delta z} + d\dfrac{M_2}{\Delta z}\right)
\end{align*}
The condition~\eqref{cfl} now implies 
\begin{align*}
\pp{\mathcalold{H}_{\bS}}{S_j^{n}} 
& = 1-\dfrac{\Delta t}{A_j\Delta z}\pp{[\Delta\Phi^{\bS}]^{n}_j}{S_{j}^{n}} +\Delta t\,\gamma_j\pp{R_{S,j}^{n}}{S_{j}^{n}}\\
& \geq  1- \Delta t \bigg( \frac{(\rho_X+\Xmax)\|\Qf\|_{\infty,T}}{(\rho_X-\Xmax)A_\mathrm{min}\Delta z} +\dfrac{\Xmax M_1\lVert \vhs \rVert_\infty}{(\rho_X-\Xmax)\Delta z} + \dfrac{\Xmax M_2D_{\bC}(\Xmax)}{(\rho_X-\Xmax)\Delta z^2} + \tilde{d}\dfrac{M_2}{\Delta z^2}+ M_{\bS}\bigg)\geq 0,
\end{align*}
where $\smash{\tilde{d} := \max\{d^{(1)},\dots, d^{(k_{\bS})} \}}$. 
Sine $\smash{\calH_{\bS}^{(k)}}$ is monotone 
 in each variable, it follows for $j\neq j_{\rm f}$ that  
\begin{align*}
0 = \mathcalold{H}^{(k)}_{\bS}(\bzero,\bzero,\bzero) \leq 
   \mathcalold{H}^{(k)}_{\bS} \bigl(\bS_{j-1}^n,\bS_j^n,\bS_{j+1}^n \bigr) = S_j^{n+1},
\end{align*}
and for the case $j=\jf$, we have
\begin{align*}
0&\le\dfrac{\Delta t}{A_{j_\mathrm{f}}\Delta z}\,{S_\mathrm{f}\Qf} = \calH^{(k)}_{\bS}(\bzero,\bzero,\bzero)
\le \calH^{(k)}_{\bS} \bigl(\bS_{j_\mathrm{f}-1}^{n},\bS_{j_\mathrm{f}}^{n}, \bS_{j_\mathrm{f}+1}^{n} \bigr) = S_{j_\mathrm{f}}^{n+1}.
\end{align*}
\end{proof}

\section{Numerical examples} \label{sec:num} 

We use the same model for denitrification as \cite{SDcace_reactive} with two solid components: ordinary heterotrophic organisms $X_{\rm OHO}$ and undegradable organics $X_{\rm U}$; and three soluble components: nitrate $S_{\rm NO_3}$, readily biodegradable substrate $S_{\rm S}$ and nitrogen $S_{\rm N_2}$, then the simulated variables are 
\begin{align*}
\bC = (
             X_{\rm OHO}, 
             X_{\rm U})^{\mathrm{T}} 
             \quad (k_{\boldsymbol{C}} =2), \quad 
\bS  =  (
             S_{\rm NO_3}, 
             S_{\rm S} , 
             S_{\rm N_2} 
              )^{\mathrm{T}}  \quad (k_{\boldsymbol{S}} =3). 
\end{align*}
The reaction terms for the solid and liquid phases used for all numerical examples are given by
\begin{align*}
 \bR_{\bC} = X_{\rm OHO}Z(X)\begin{pmatrix} 
             \mu(\bS)-b\\
            f_{\rm P}b
                   \end{pmatrix} 
                  ,\quad
\bR_{\bS} = X_{\rm OHO} 
          \begin{pmatrix}  
            - \bar{Y}\mu(\bS)\\
            (1-f_{\rm p})b-\mu(\bS)/Y\\
             \bar{Y} \mu(\bS)
                   \end{pmatrix} 
           ,\quad  \bar{Y} = \dfrac{1-Y}{2.86Y}
\end{align*}
where $Y = 0.67$ is a yield factor, $b = 6.94\times 10^{-6}\,\mathrm{s}^{-1}$ is the decay rate of heterotrophic organisms and $f_\mathrm{P} = 0.2$ is the portion of these that decays to undegradable organics.
The continuous function~$Z(X)$ should be equal to one for most concentration and satisfies $Z(\Xmax)=0$, so that the technical assumption~\eqref{eq:assumptionRC} is satisfied. 
The function~$Z(X)$ should not influence the condition~\eqref{cfl} and we have used $\Xmax=30\,\mathrm{kg}/\mathrm{m}^3$, a value our simulated solutions never reaches, despite we have simulated with $Z(X)\equiv 1$.
Moreover, 
\begin{align*}
 \mu(\bS) = \mumax \dfrac{S_{\rm NO_3}}{K_{\rm NO_3}+S_{\rm NO_3}} \dfrac{S_{\rm S}}{K_{\rm S}+S_{\rm S}}
\end{align*}
is the specific growth rate function 
with $\mumax = 5.56\times 10^{-5}\,\mathrm{s}^{-1}$, and saturation parameters $K_{\rm NO_3} = 5\times 10^{-4}\, \rm kg/m^3$ and $K_{\rm S} = 0.02\, \rm kg/m^3$. Adding the components of the reaction terms we get
\begin{align*}
 \tilde{R}_{\bC} = R_{\bC}^{(1)}+  R_{\bC}^{(2)}   = \bigl(\mu(\bS)-(1-f_{\rm P})b\bigr)X_{\rm OHO}Z(X), \quad 
   \tilde{R}_{\bS} =  R_{\bS}^{(1)} +  R_{\bS}^{(2)}  + R_{\bS}^{(3)} = R_{\bS}^{(2)}.
\end{align*}
The constitutive functions used in all simulations are 
\begin{align*}
\vhs(X) := \dfrac{v_0}{1 + (X/ \bar{X})^\eta}, \qquad
\sigma_{\mathrm{e}}(X)=\alpha \chi_{\{ X \geq X_{\mathrm{c}} \}} (X-X_{\mathrm{c}}),
\end{align*}
with the constants $v_0 = 1.76\times 10^{-3}\, \rm m/s$, $\bar{X} = 3.87\, \rm kg/m^3$, $\eta = 3.58$, $X_{\mathrm{c}} = 5\, \rm kg/m^3$ and $\alpha = 0.2\,\rm m^2/s^2$. 
Other parameters are $\rho_X = 1050\, \rm kg/m^3$, $\rho_L = 998\, \rm kg/m^3$ and $g = 9.81 \,\rm m/s^2$. The feed concentrations of soluble components in all examples are $\bSf  = (6.00\times 10^{-3}, 9.00\times 10^{-4},0)^{\rm T}\, \rm kg/m^3$, which 
 are chosen  constant with respect to time.

\begin{figure}[t]
 \begin{tabular}{ccc}
 \includegraphics[scale=0.4]{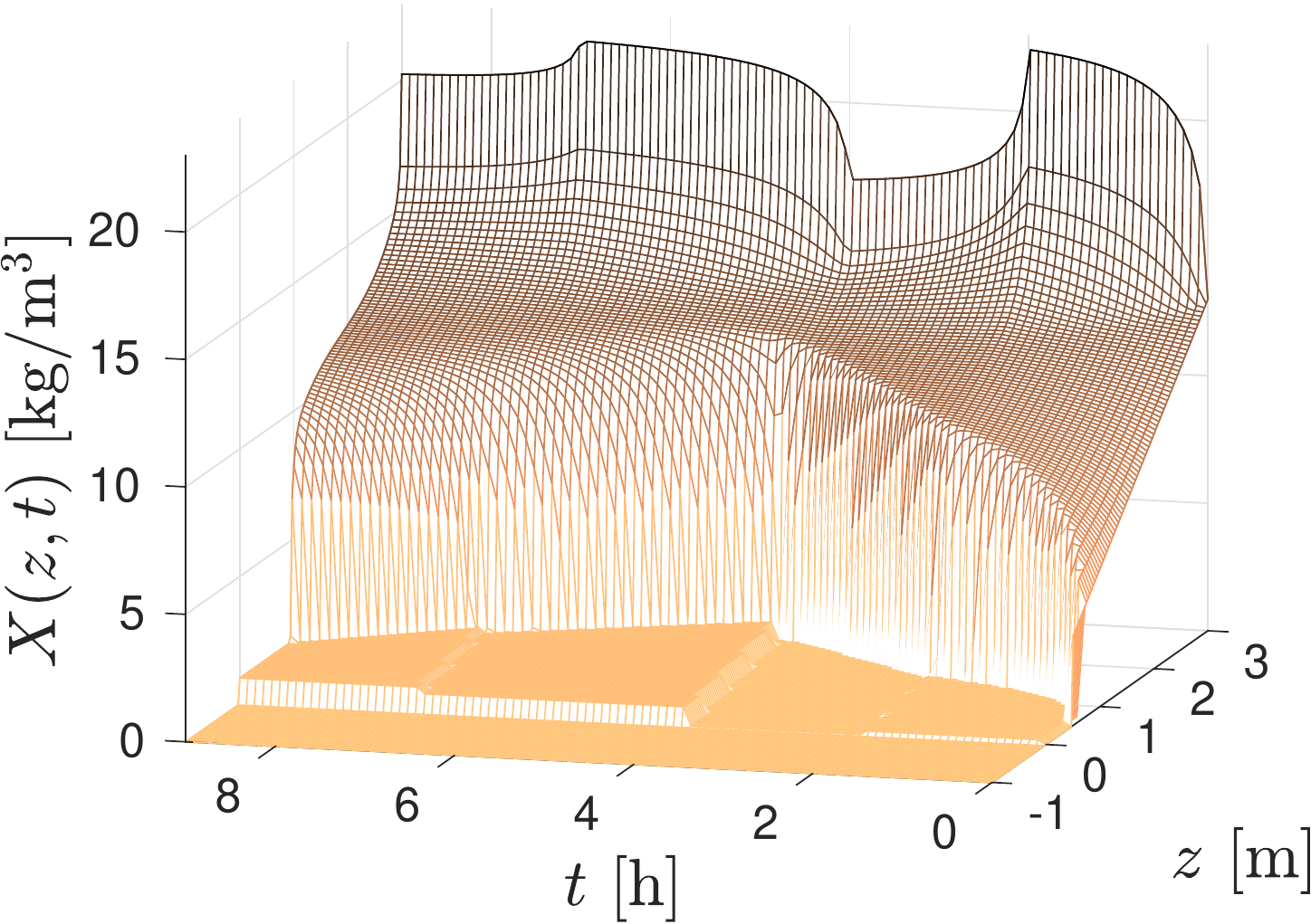} &
 \hspace{-1cm}\includegraphics[scale=0.4]{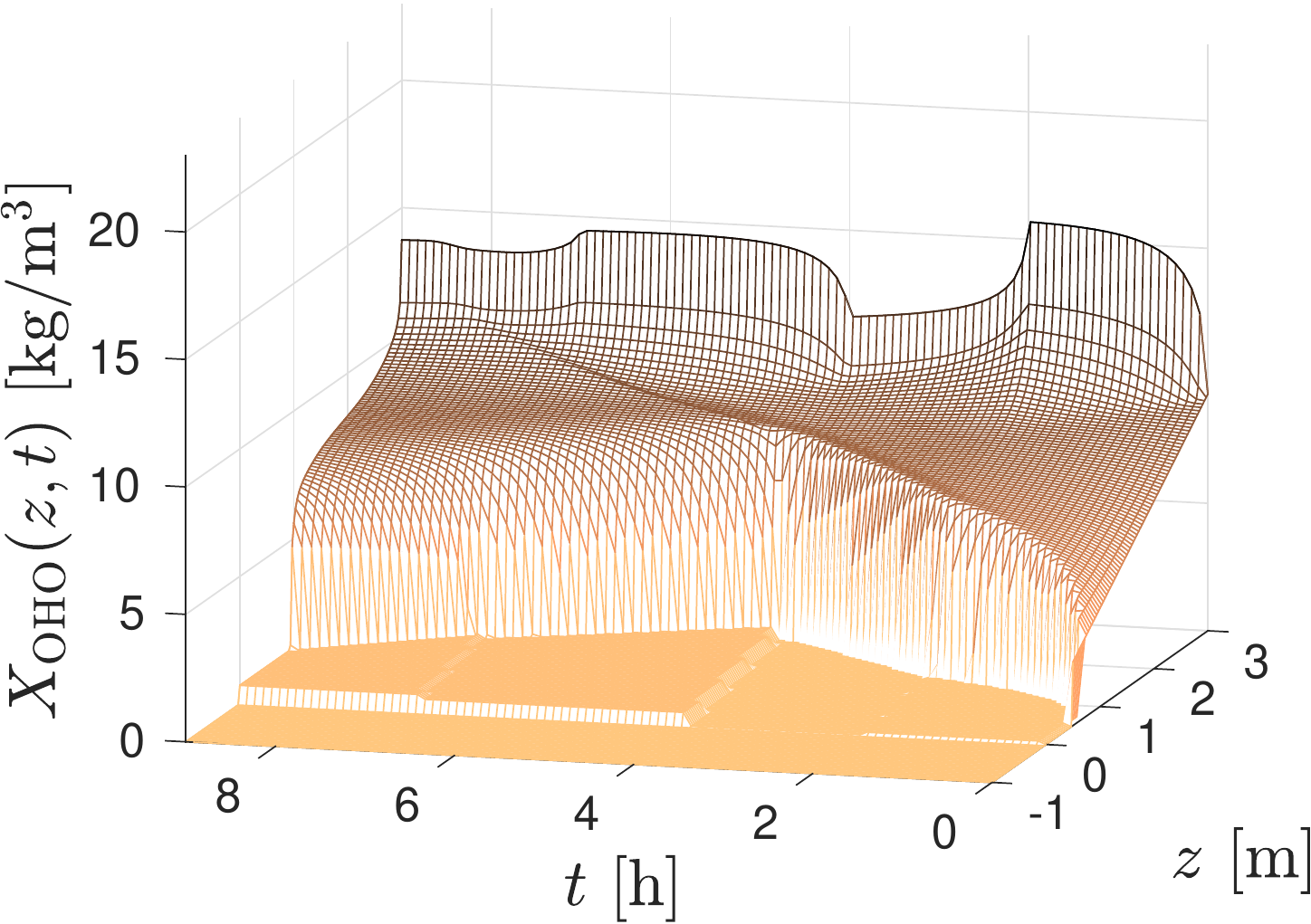} \\
 \includegraphics[scale=0.4]{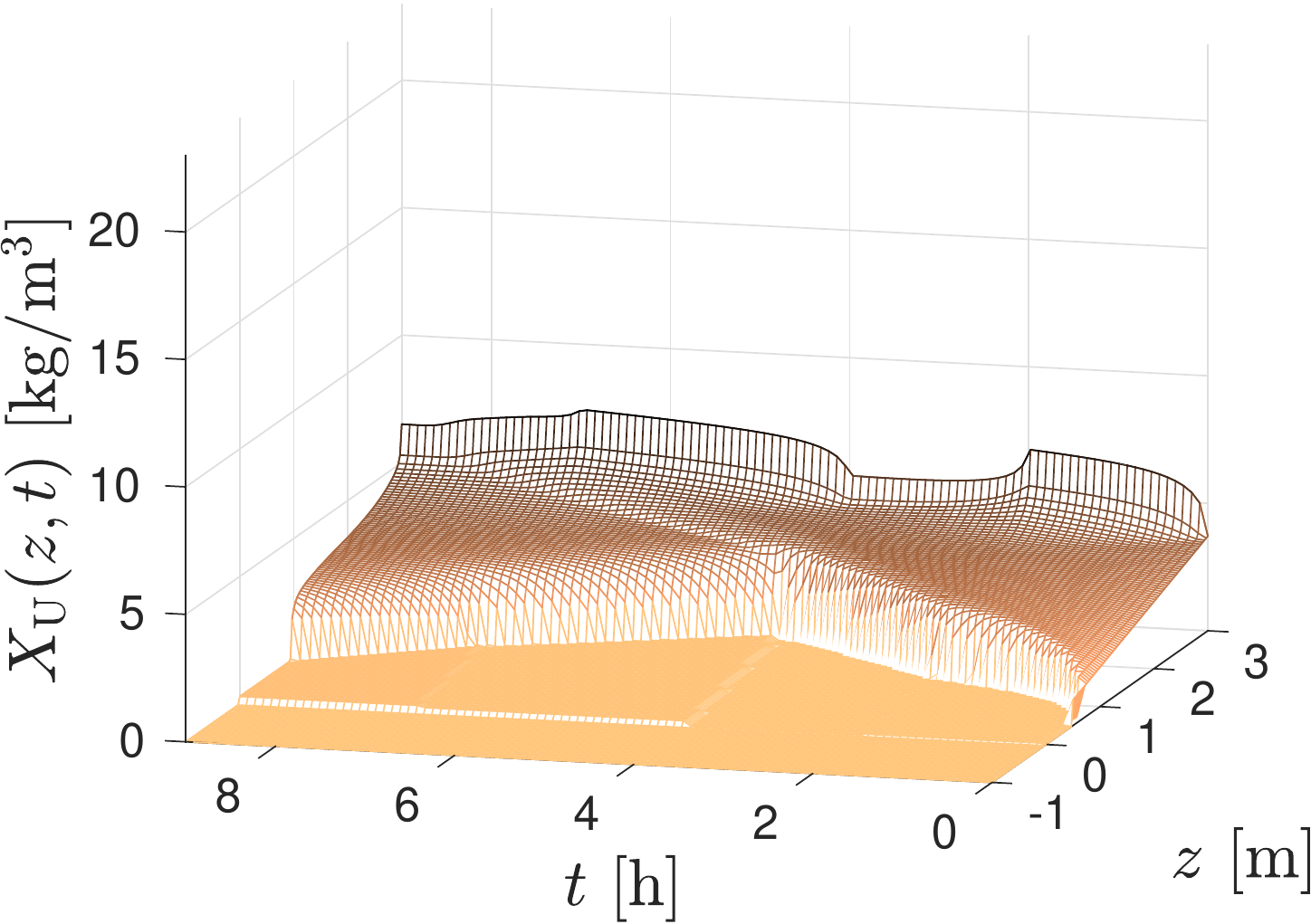} &
 \hspace{-1cm} \includegraphics[scale=0.4]{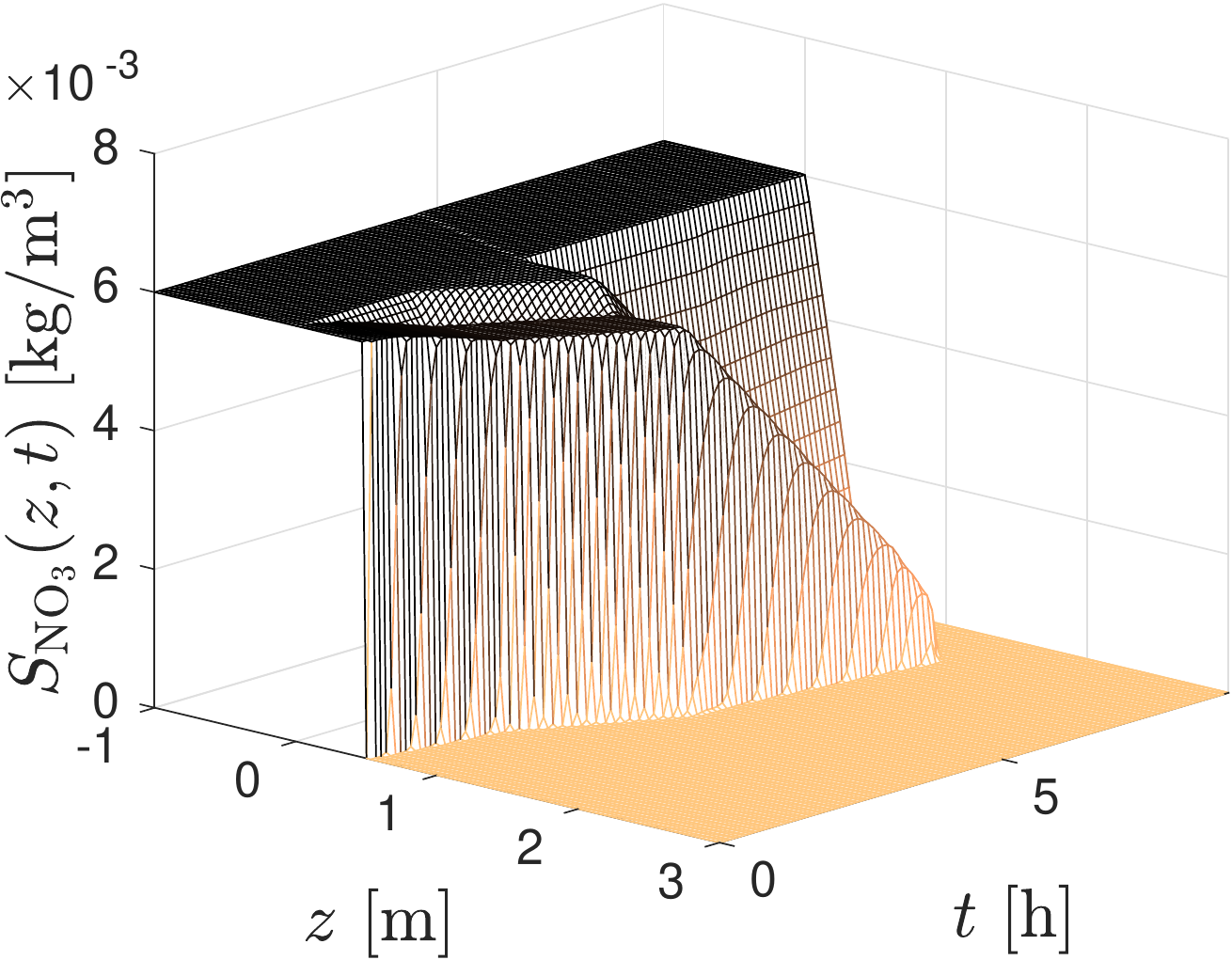} \\
 \includegraphics[scale=0.4]{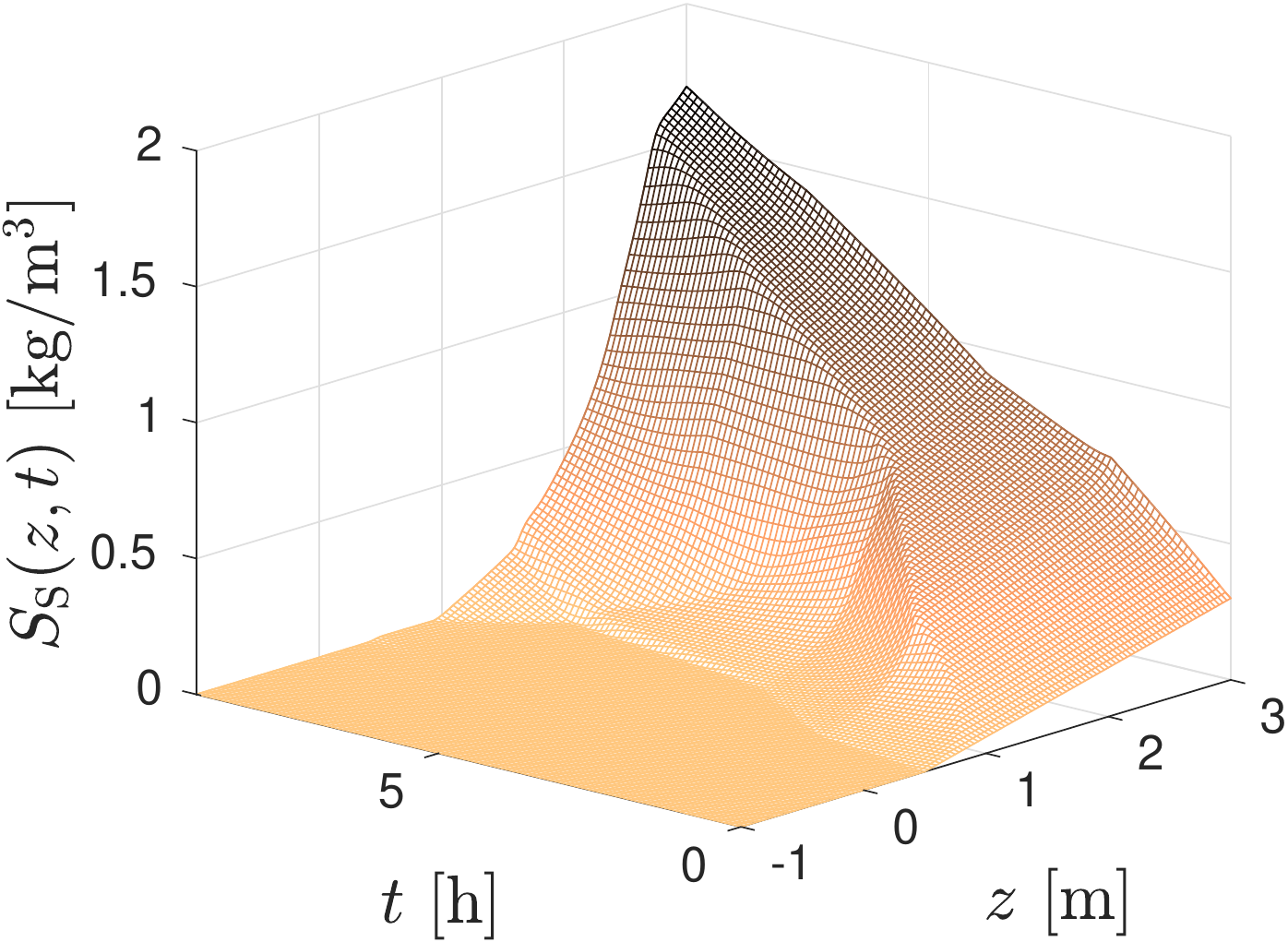} &
 \hspace{-1cm}\includegraphics[scale=0.4]{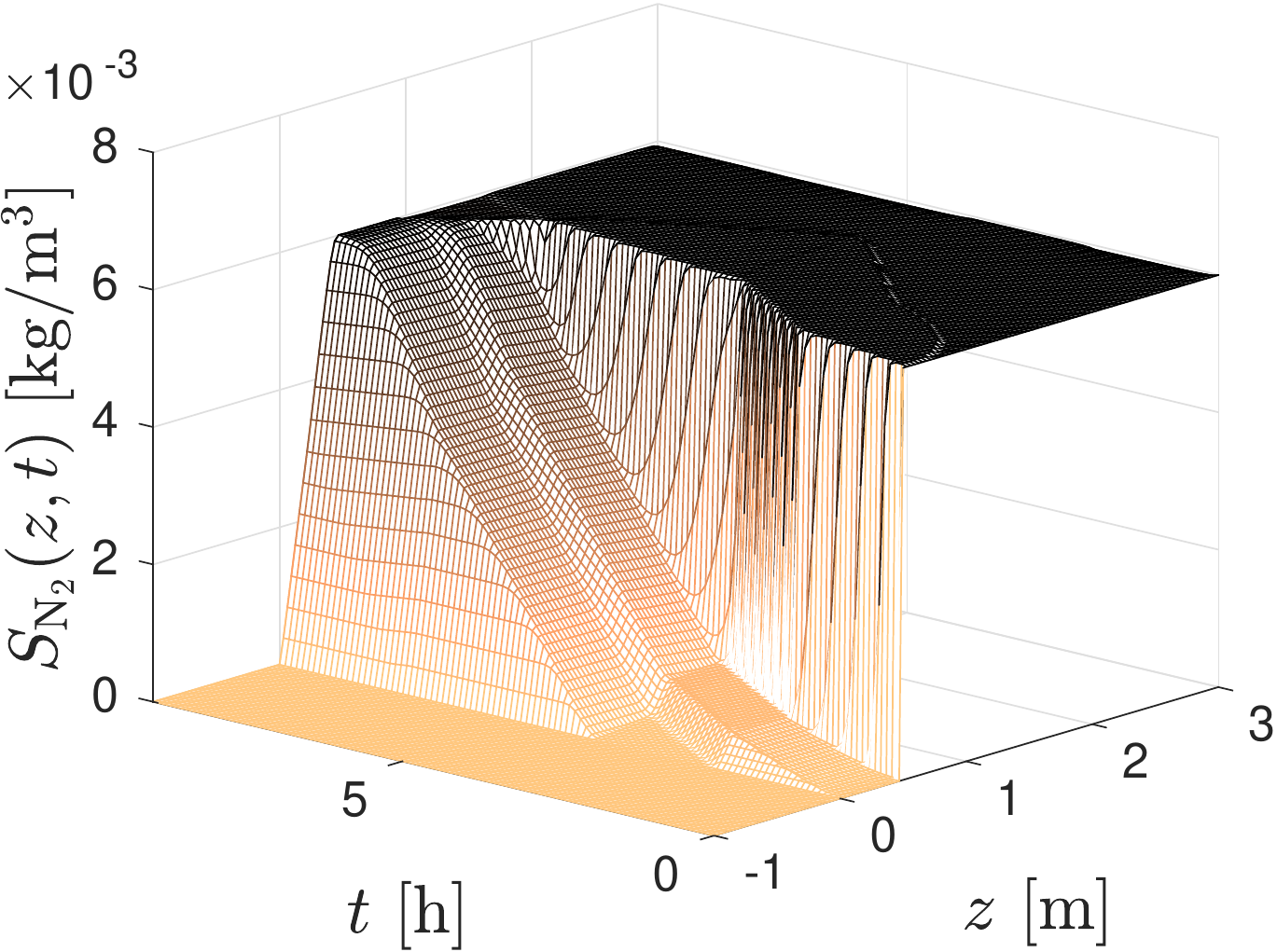}
 \end{tabular}
 \caption{Example 1: Reference solution  with $N = 4096$ and $T = 20\,\rm h$. The solution has been projected onto a coarse grid.} \label{fig:refsol}
\end{figure}%

 \begin{table}[t]
\caption{Example 1: Errors~$e_N^{\textrm{rel}}$ \eqref{eq:error}, approximate order of convergence~$\theta$ \eqref{eq:theta} and CPU times at simulated times $3,6$ and $9$ hours. The errors have been computed with the reference solution obtained by Method~CS with $N = 4096$.} \label{table:error}
\begin{center}
\begin{tabular}{|c|ccc|ccc|} \hline 
 & \multicolumn{3}{c|}{Method CS} & \multicolumn{3}{c|}{Method XP} \\
 \hline
 & \multicolumn{6}{c|}{$t = 3{\rm \, h}$}\\
 \hline
 $N$ & $e_N^{\textrm{rel}}(t)$ & $\theta(t)$ &  CPU $[\mathrm{s}]$ & $e_N^{\textrm{rel}}(t)$ & $\theta(t)$ &  CPU $[\mathrm{s}]$ \\ \hline 
16 &  0.7239 & --- & 0.2047 &  0.5868 & --- & 0.2577 \\ 
32 & 0.4042 & 0.8407 & 0.3675 & 0.3413 & 0.7819 & 0.4687 \\ 
64 & 0.2471 & 0.7100 & 0.6834 & 0.2086 & 0.7101 & 0.8867 \\ 
128 & 0.1487 & 0.7326 & 1.3370 & 0.1271 & 0.7154 & 1.7144 \\ 
256 & 0.0868 & 0.7763 & 2.6357 & 0.0747 & 0.7664 & 3.3940 \\ 
512 & 0.0481 & 0.8514 & 6.6872 & 0.0415 & 0.8462 & 6.9696 \\ 
\hline
 & \multicolumn{6}{c|}{$t = 6{\rm \,  h}$}\\
\hline
16 &  1.1278 & --- & 0.3939 &  0.8704 & --- & 0.5137 \\ 
32 & 0.6411 & 0.8149 & 0.7164 & 0.5116 & 0.7668 & 0.9316 \\ 
64 & 0.3840 & 0.7396 & 1.3411 & 0.3074 & 0.7347 & 1.7577 \\ 
128 & 0.2304 & 0.7369 & 2.6078 & 0.1843 & 0.7382 & 3.3995 \\ 
256 & 0.1319 & 0.8049 & 5.1365 & 0.1052 & 0.8087 & 6.6752 \\ 
512 & 0.0710 & 0.8934 & 12.9595 & 0.0563 & 0.9015 & 13.5693 \\ 
\hline
 & \multicolumn{6}{c|}{$t = 9{\rm \, h}$}\\
 \hline
16 &  0.8363 & --- & 0.5929 &  0.6182 & --- & 0.7721 \\ 
32 & 0.4675 & 0.8390 & 1.0663 & 0.3599 & 0.7803 & 1.3779 \\ 
64 & 0.2735 & 0.7738 & 2.0404 & 0.2056 & 0.8078 & 2.6182 \\ 
128 & 0.1535 & 0.8331 & 3.9169 & 0.1131 & 0.8626 & 5.0563 \\ 
256 & 0.0829 & 0.8895 & 7.7370 & 0.0593 & 0.9308 & 10.0362 \\ 
512 & 0.0425 & 0.9624 & 19.5205 & 0.0289 & 1.0350 & 20.4181 \\  
\hline
\end{tabular}
\end{center}
\end{table}%

\begin{figure}[t]
\begin{center}
 \begin{tabular}{cc}
                  \includegraphics[scale=0.44]{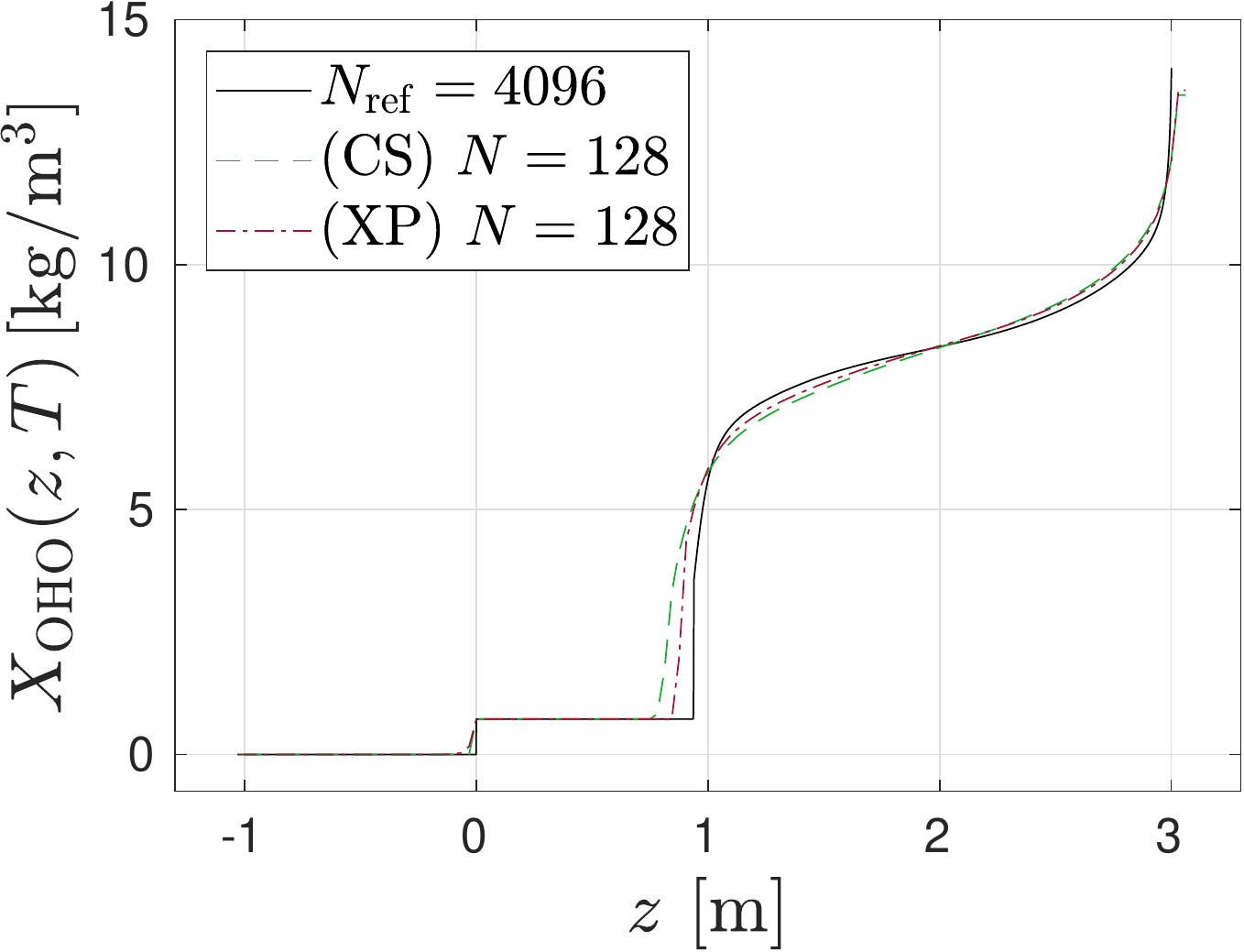} &
 \hspace{-0.9cm}  \includegraphics[scale=0.44]{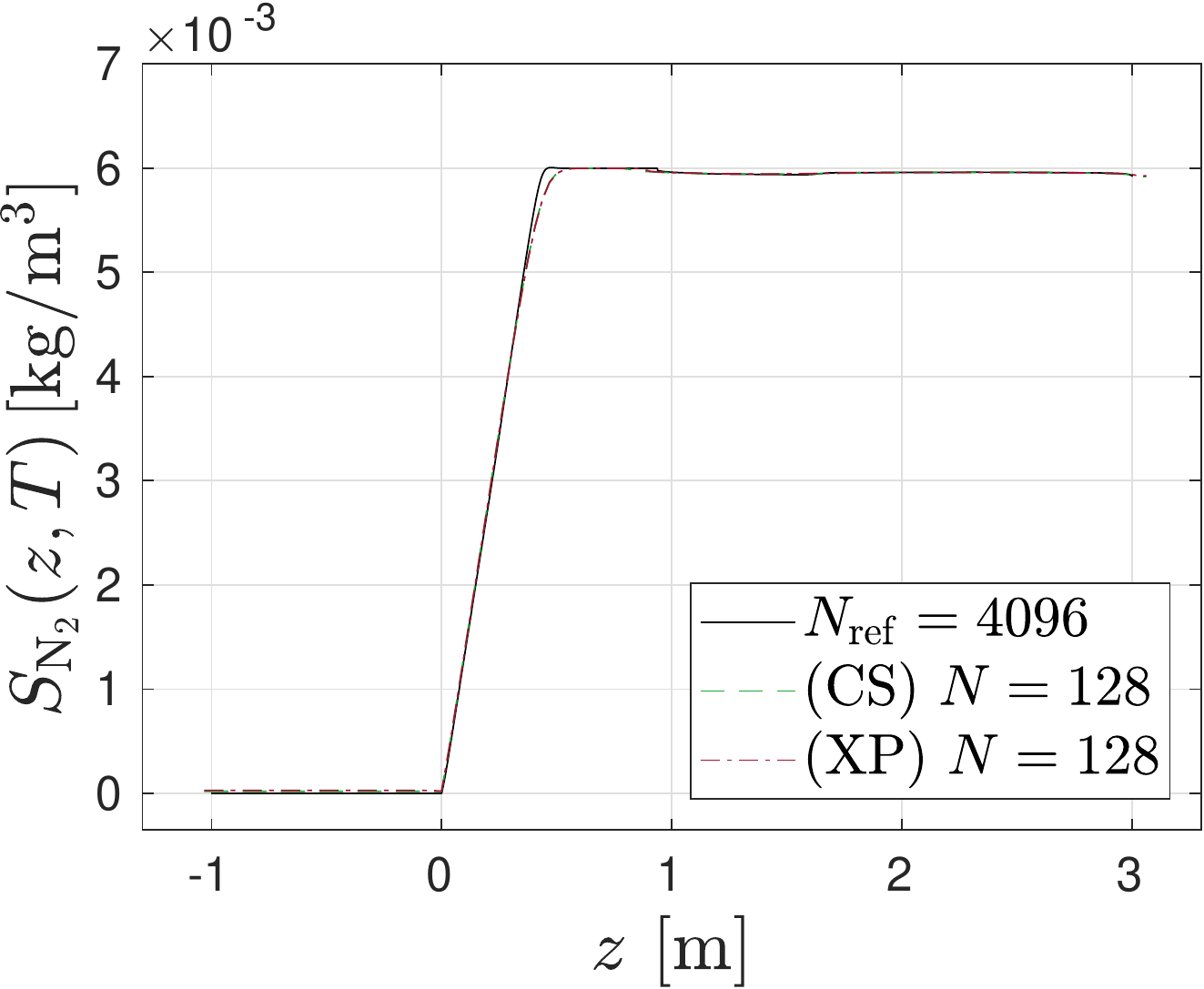} \\
                  \includegraphics[scale=0.44]{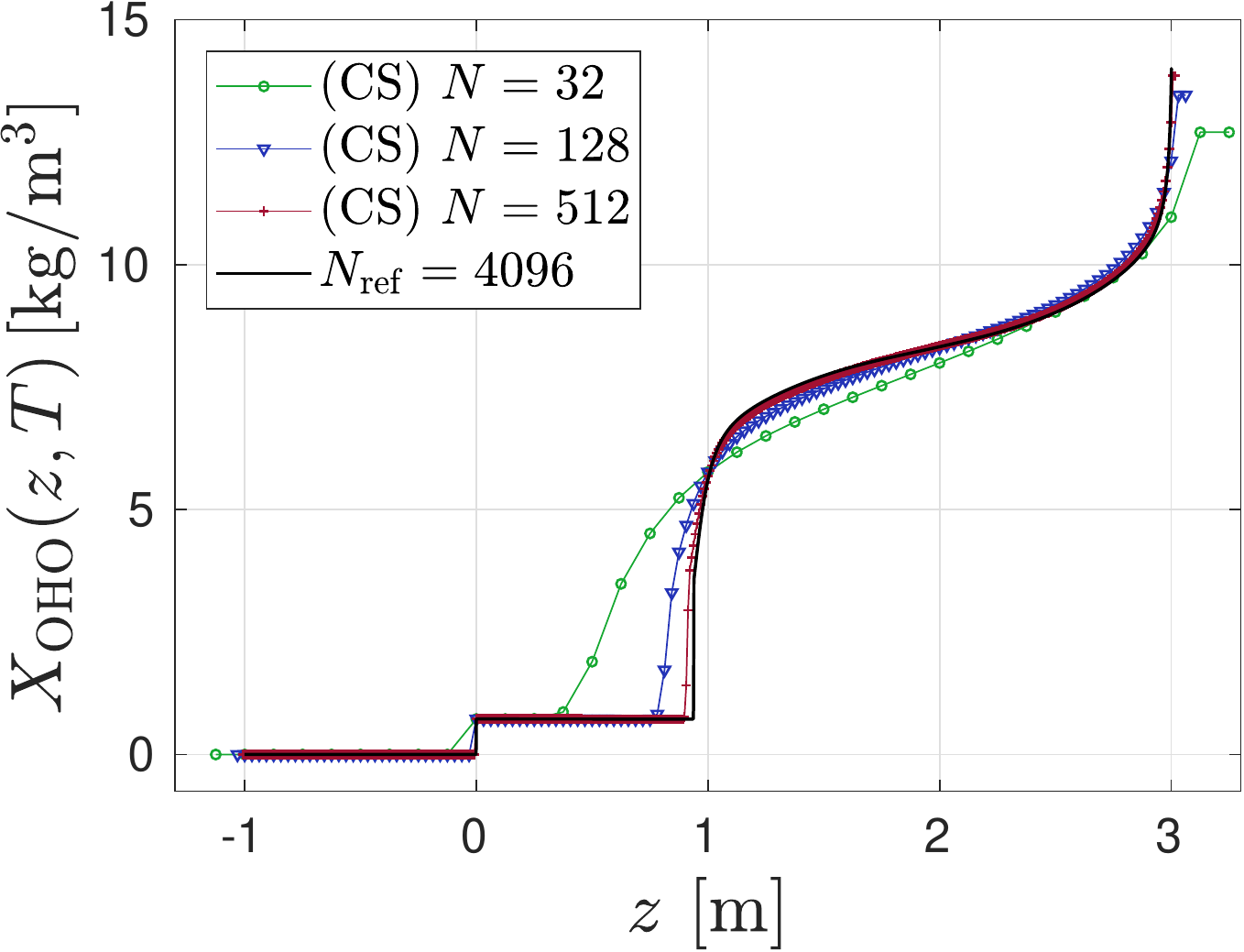}  &
  \hspace{-0.9cm} \includegraphics[scale=0.44]{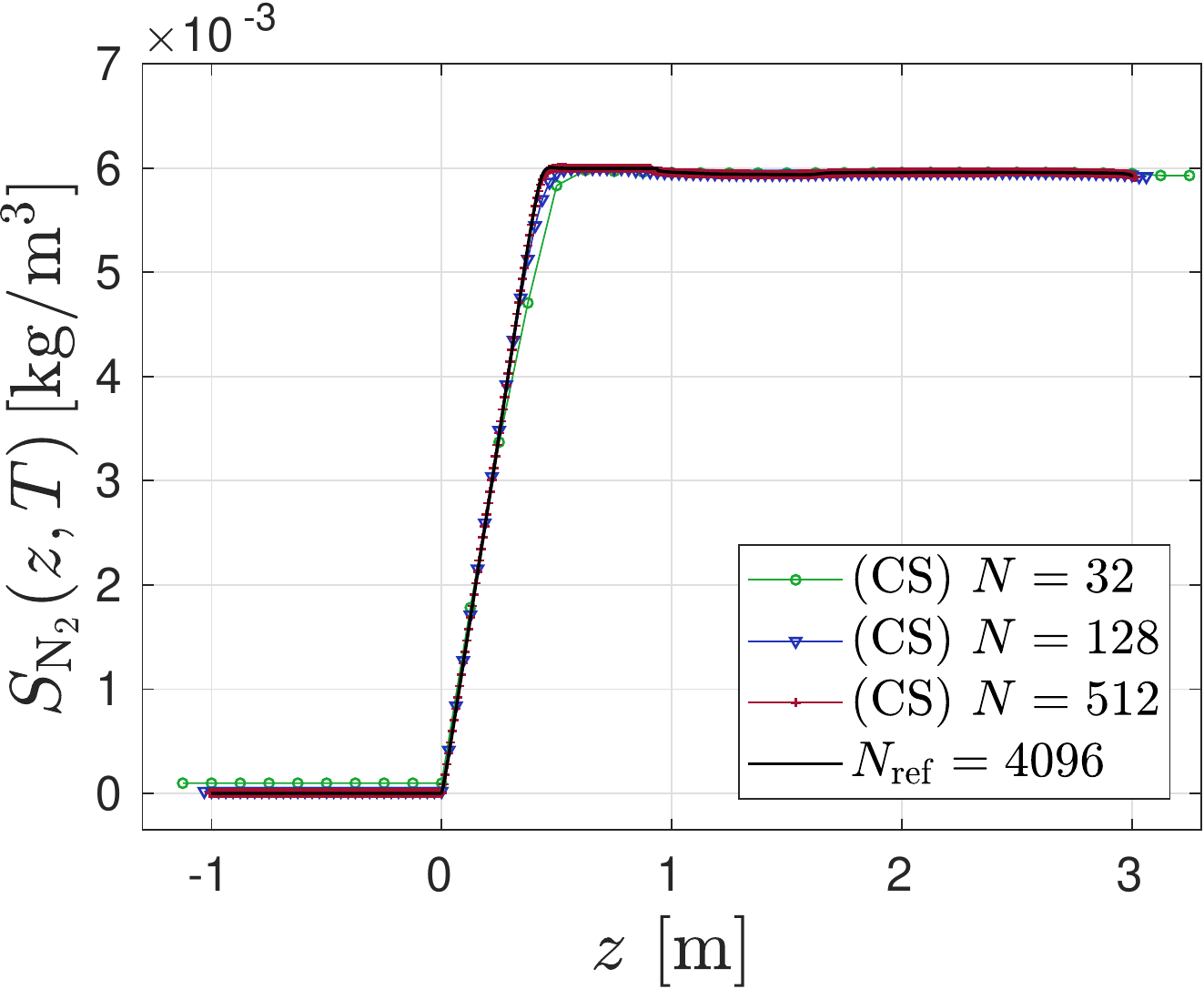}
 \end{tabular}
\end{center}
\caption{Example~1: First row: comparison of $X_{\rm OHO}$ (left) and $S_{\rm N_2}$ (right) obtained with 
Methods~XP and~CS   at simulated time $T = 9\,\rm h$ with $N = 128$. Second row: convergence of the 
Method~CS  at $T=9\,\rm h$. The reference solutions are shown in solid black.} \label{fig:profiles}
\end{figure}%

\subsection{Example 1} \label{subsec:ex1} 
In this example, we compare the new Method~CS with Method~XP of~\cite{SDm2an_reactive}.
Since the latter method only handles a constant cross-sectional area, we choose $A = 400\,\rm m^2$.
The depth parameters are $H = 1\,\rm m$ and  $B = 3\,\rm m$. 
The bulk flows are given by
\begin{align*}
 \Qf(t) = \begin{cases}
            450\,{\rm m^3/h} & \text{if $0\,{\rm h}\leq t < 2 \,{\rm h}$,}  \\
            130\,{\rm m^3/h} & \text{if $2\,{\rm h}\leq t < 4 \,{\rm h}$,} \\
            65\,{\rm m^3/h}  & \text{if $t\geq 4\,{\rm h}$,} 
           \end{cases}
\qquad
 \Qu(t) = \begin{cases}
            30\,{\rm m^3/h}  & \text{if $0\,{\rm h}\leq t < 2\,{\rm h}$,} \\
            100\,{\rm m^3/h} & \text{if $2\,{\rm h}\leq t < 4\,{\rm h}$,} \\
            35\,{\rm m^3/h}  & \text{if $4\,{\rm h}\leq t < 7\,{\rm h}$,}\\
            50\,{\rm m^3/h}  & \text{if $t \geq 7\,{\rm h}$,} 
           \end{cases}
\end{align*}
and $\Qe$ according to $\Qe(t) = \Qf(t) -\Qu(t)$. The solids feed concentrations  are  taken as
\begin{align*}
\bCf(t) = X_{\mathrm{f}}(t) \begin{pmatrix} 
                        5/7\\ 
                        2/7
            \end{pmatrix},\quad\mbox{where}\quad
 X_{\mathrm{f}}(t) = \begin{cases}
            1.0\,{\rm kg/m^3} & \text{if $0\,{\rm h} \leq t < 2\,{\rm h}$,} \\
            0.5\,{\rm kg/m^3} & \text{if $2\,{\rm h} \leq t < 4\,{\rm h}$,}\\
            3.0\,{\rm kg/m^3} & \text{if $4\,{\rm h} \leq t < 7\,{\rm h}$,}\\
            4.0\,{\rm kg/m^3} & \text{if $t \geq 7\,{\rm h}$,}
           \end{cases}  
           \end{align*} 
            and the initial conditions have been chosen as 
            \begin{gather*} 
\bC_0(z) = X_0(z) \begin{pmatrix} 
                       5/7\\ 
                       2/7
                     \end{pmatrix}, \quad \mbox{where}\quad
      X_0(z) = \begin{cases}
                0    \,{\rm m}    &  \text{if  $z<0.5 \,{\rm m}$,} \\
                3.8z+1.6\,{\rm m} &  \text{if  $z\geq 0.5 \,{\rm m},$} 
               \end{cases}  \\ 
\bS_0(z) =\begin{cases}  (0.006, 0, 0)^{\rm T} & \text{if $z < 0.5\,\rm m $,} \\
   (0,  0.12(z-0.5), 0.006)^{\rm T} & \text{if $z\geq 0.5\,\rm m $.}  \end{cases} 
   \end{gather*} 
Here and in the next examples, the initial condition for all variables is taken constant outside the vessel. The value at the respective boundary is extended,  i.e.,  we set 
 $\bC_{0}(z) = \bC_{0}(-H) $ for $z\leq -H$ and $\bC_{0}(z) = \bC_{0}(B) $ for $z\geq B$ and analogously for~$\bS_0$.

  We have computed a reference solution with $N =N_{\rm ref} := 4096$ for a simulated time of $T=9\,$h with  Method~CS, see Figure~\ref{fig:refsol}. 
The approximate numerical error~$\smash{ e_N^{\rm rel}(t)}$ of an approximate solution (with respect to the reference solution) at a simulated time point~$t$ and the estimated rate of convergence~$\theta(t)$  for two $N$-values are defined 
 as follows: 
\begin{align}\label{eq:error}
 e_N^{\rm rel}(t)& := \sum_{k = 1}^{k_{\bC}} \dfrac{\lVert C^{(k)}_N-C^{(k)}_{N_{\rm ref}} (\cdot,t)\rVert_{L^1(-H,B)}}{\lVert C^{(k)}_{N_{\rm ref}} (\cdot,t)\rVert_{L^1(-H,B)}}
 +
  \sum_{k = 1}^{k_{\bS}} \dfrac{\lVert S^{(k)}_N-S^{(k)}_{N_{\rm ref}} (\cdot,t)\rVert_{L^1(-H,B)}}{\lVert S^{(k)}_{N_{\rm ref}} (\cdot,t)\rVert_{L^1(-H,B)}},  \\ 
  \label{eq:theta}
  \theta(t) & := - \frac{\log (e^{\rm rel}_{N_1}(t)/e^{\rm rel}_{N_2}(t) )}{\log\left(N_1/N_2\right)}, 
\end{align}
Table \ref{table:error} shows these estimations in this example. As expected, both methods have order of convergence close to one.
The errors produced by Method~XP are only slightly smaller than those of Method~CS and the CPU times are about the same for both methods. 
In Figure~\ref{fig:profiles} (first row), we compare some numerical solutions for $X_{\rm OHO}$ and $S_{\rm N_2}$ at a fixed time point and for different $N$. In the second row of Figure~\ref{fig:profiles}, we visualize the convergence of numerical solutions to the reference solution, all with Method~CS.

\begin{figure}[t]
\centering
 \includegraphics[scale=0.5]{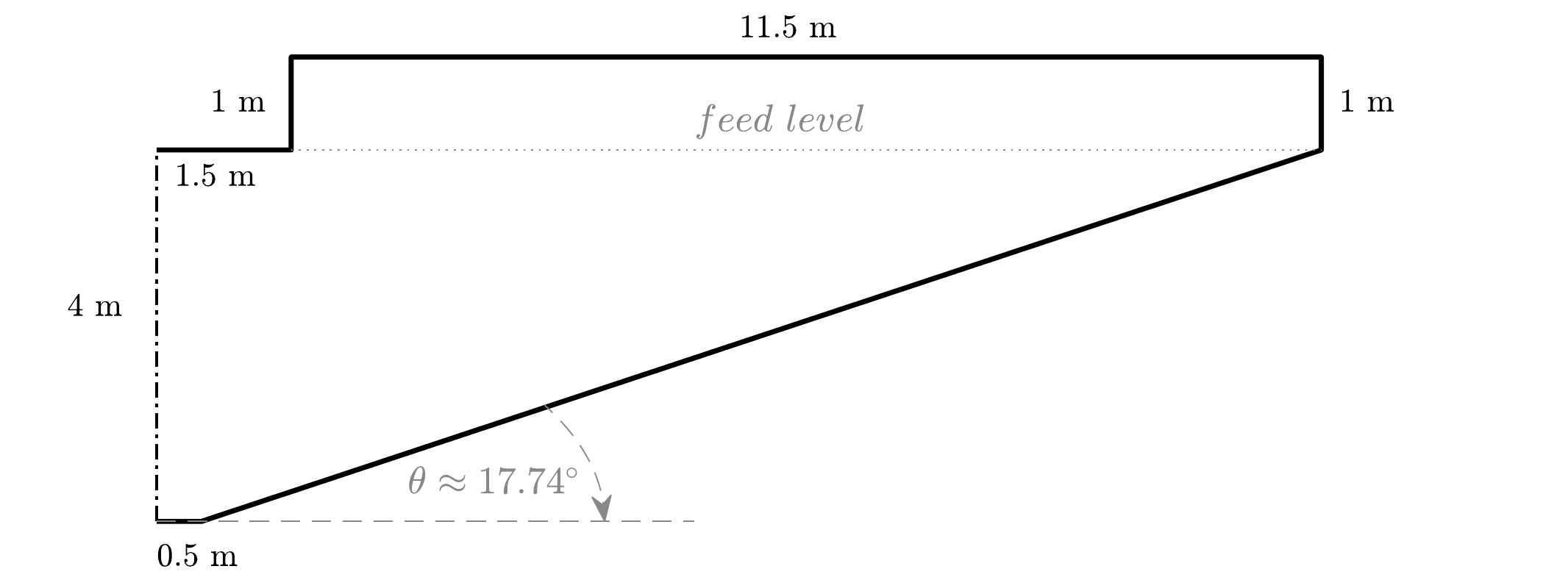} 
 \caption{Schematic of half of the vertical cross-sectional area of the axisymmetric vessel used in Examples 2 to 5. The dash-dotted line represents the axis of rotation.} \label{fig:vessel}
\end{figure}%

\begin{figure}[th] 
\centering 
 \begin{tabular}{cc}
 \includegraphics[scale=0.4]{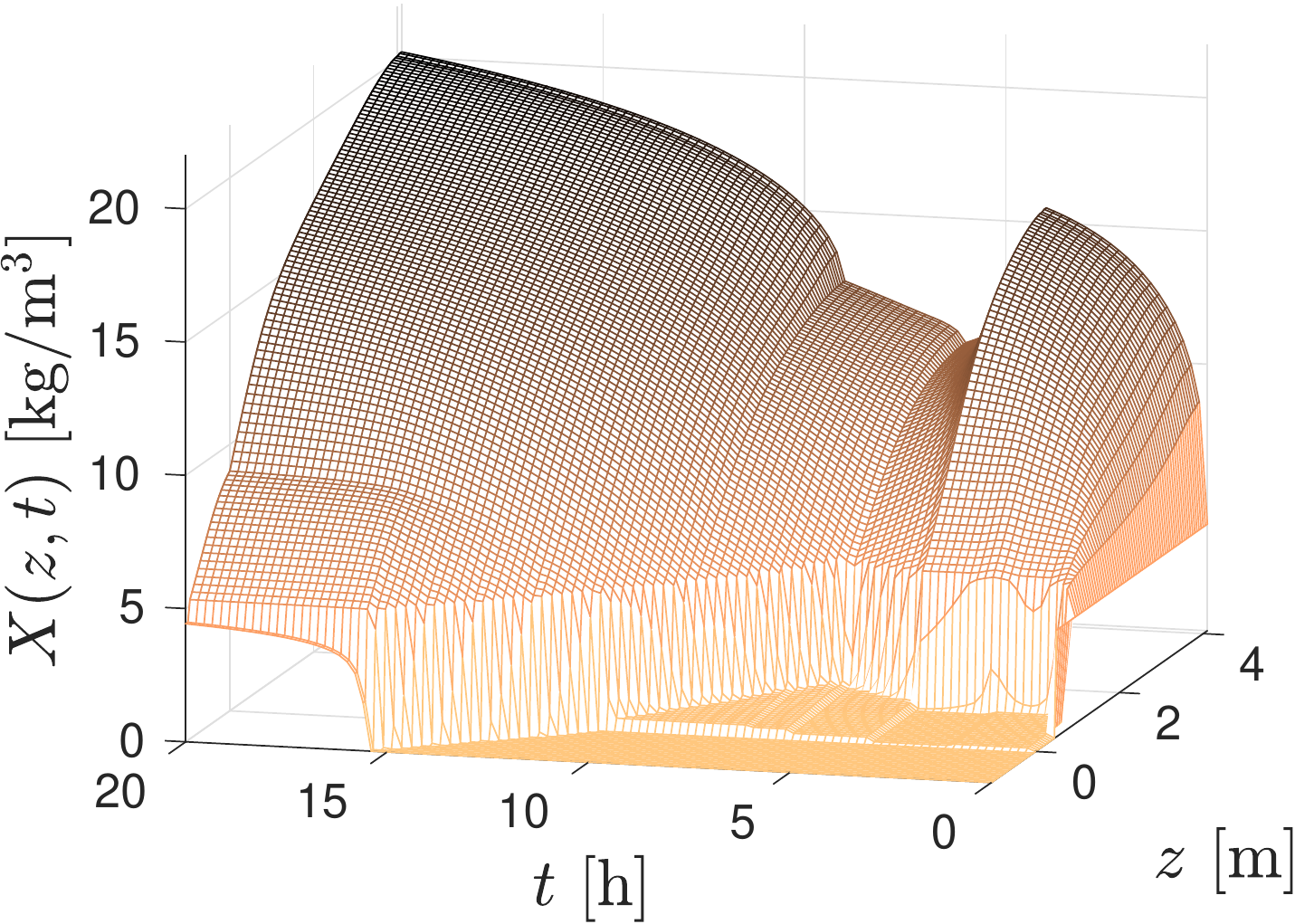} &
 \hspace{-1cm} \includegraphics[scale=0.4]{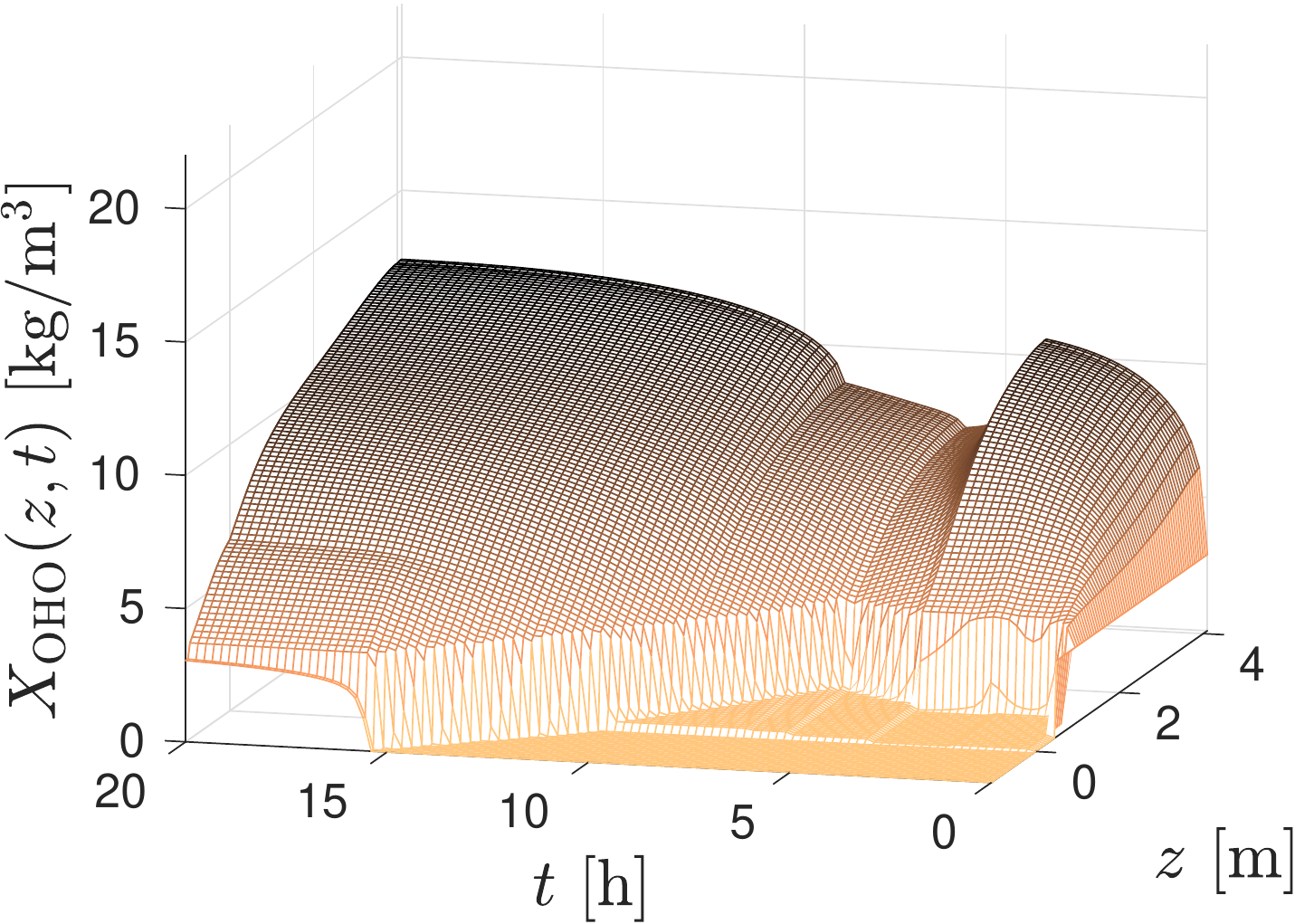} \\
 \includegraphics[scale=0.4]{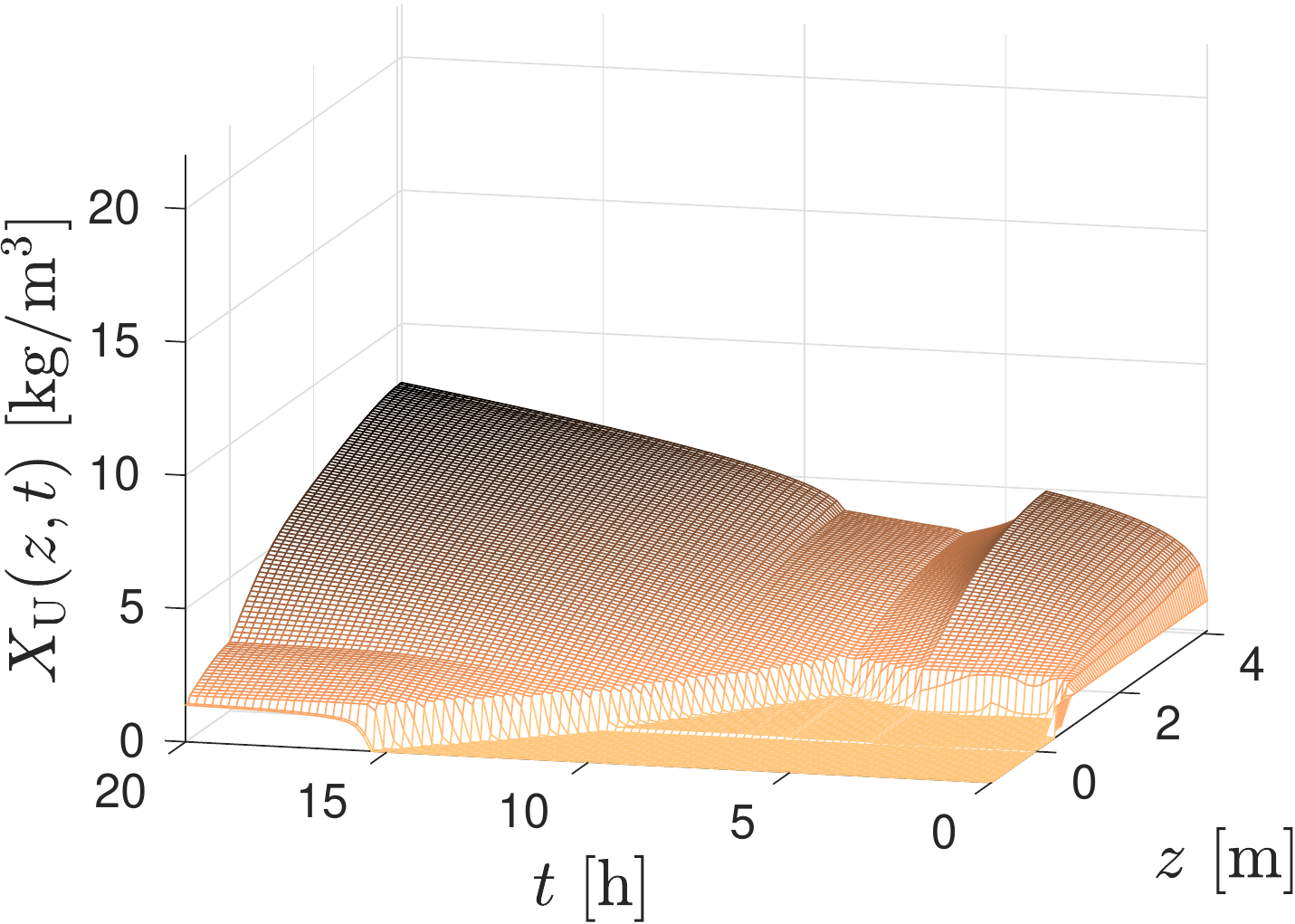} &
 \hspace{-1cm} \includegraphics[scale=0.4]{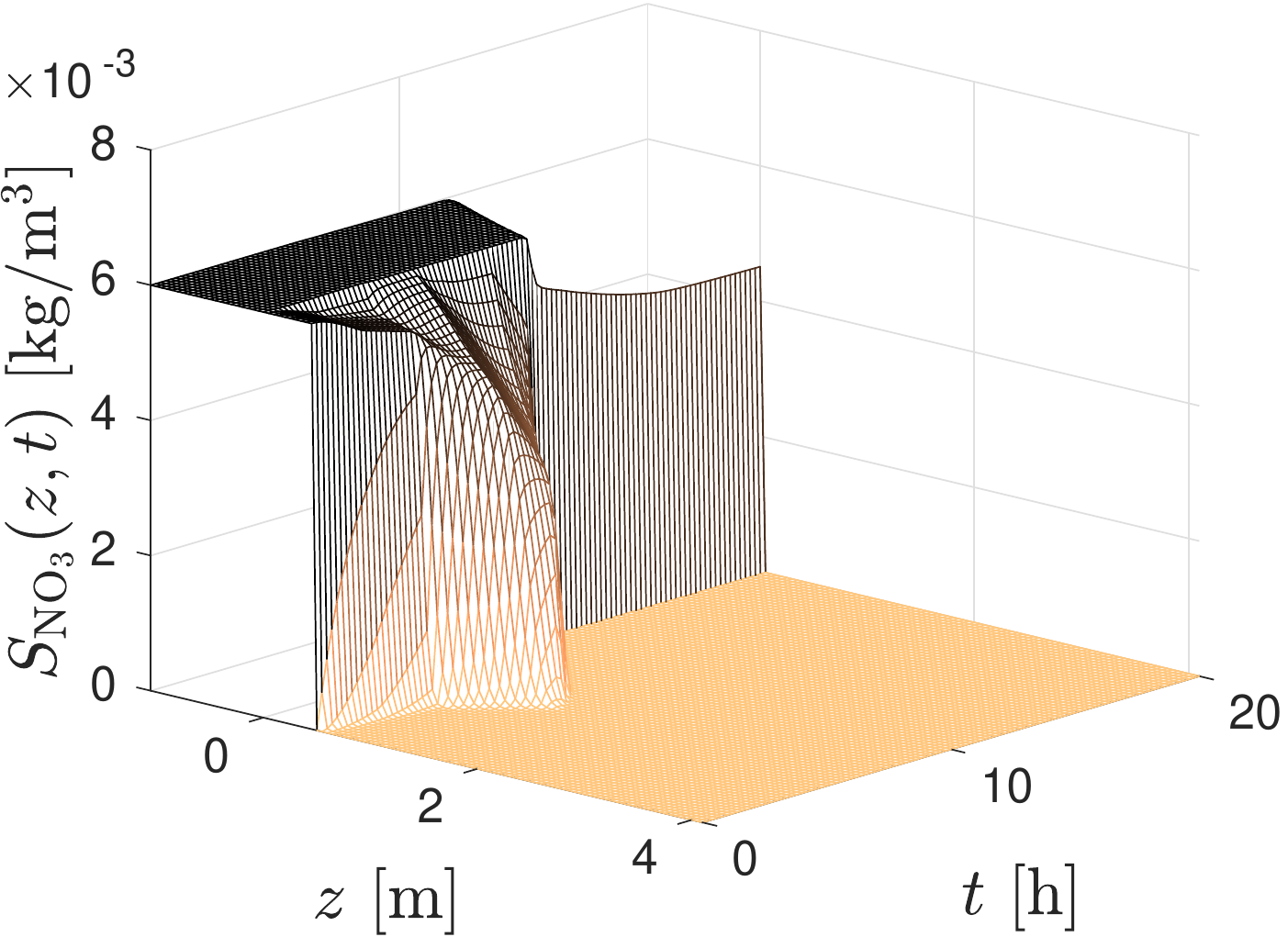} \\
 \includegraphics[scale=0.4]{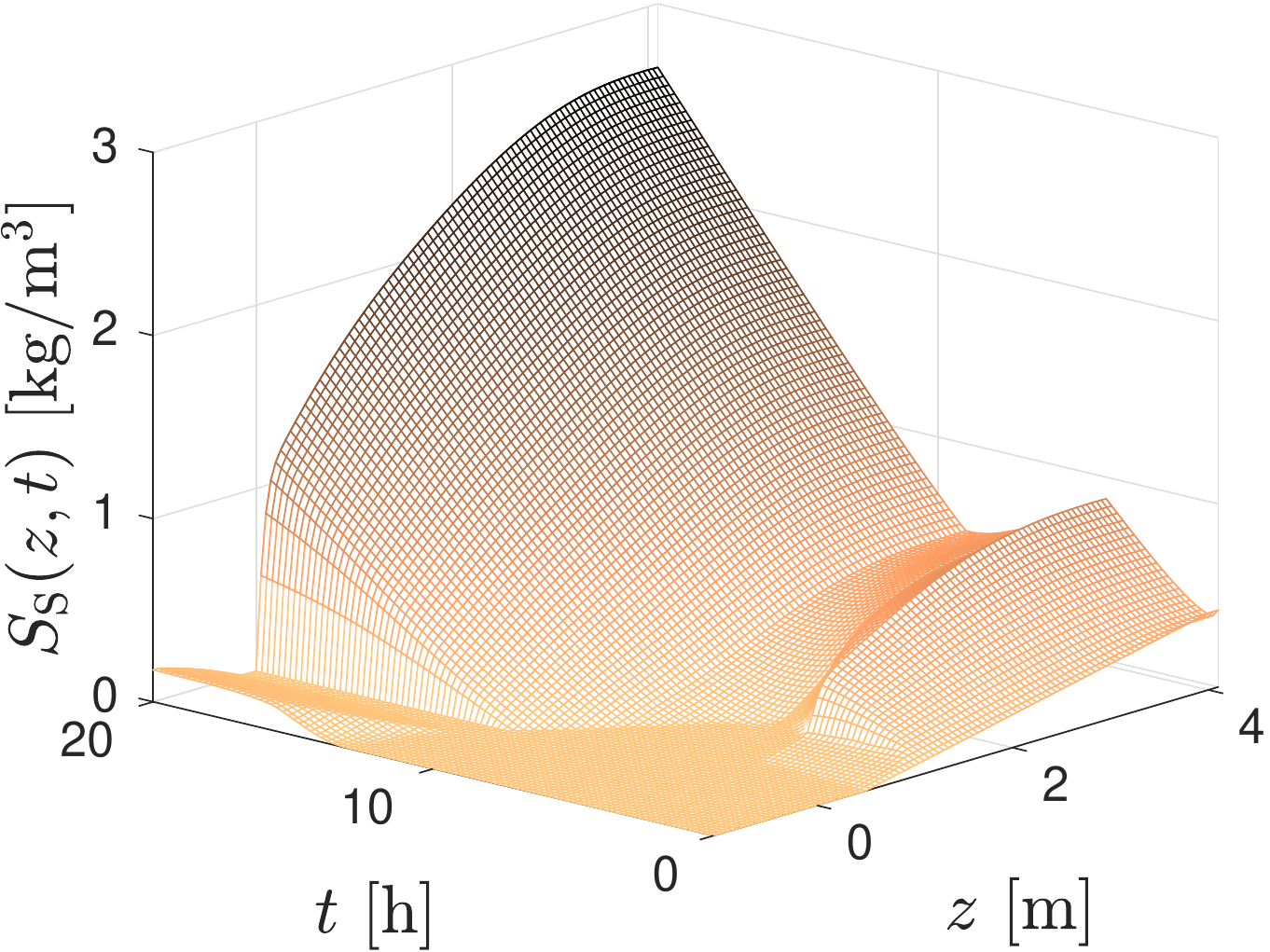} &
 \hspace{-1cm}   \includegraphics[scale=0.4]{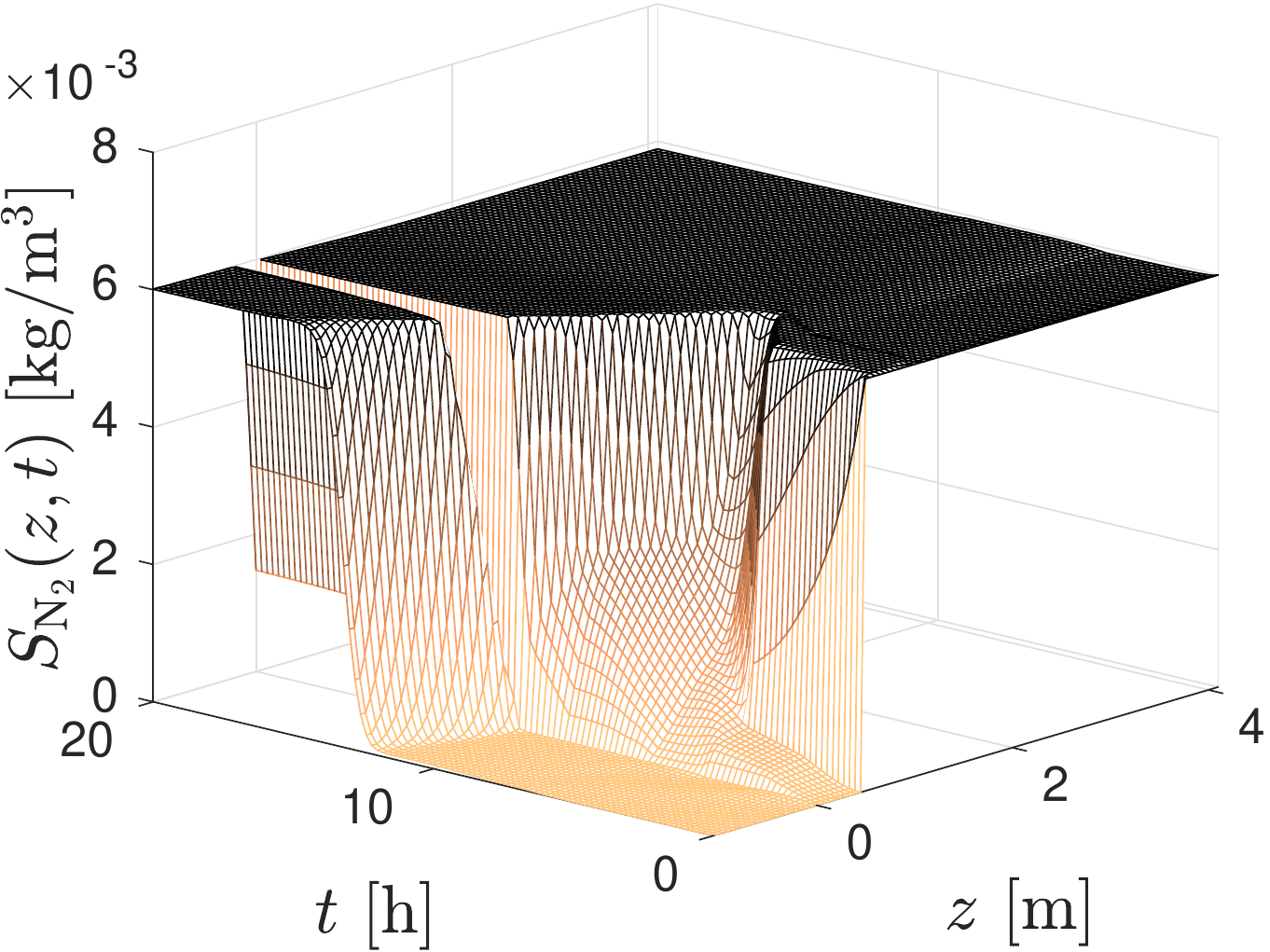}
 \end{tabular}
 \caption{Example 2: Numerical simulation with $N = 100$ until $T = 20\,\rm h$.} \label{fig:simtwo}
\end{figure}%

\subsection{Example 2} \label{subsec:ex2} 
Here and in Examples~3  to~5,   we use a non-constant function $A = A(z)$ that  describes the 
 axisymmetric, non-cylindrical  tank in Figure~\ref{fig:vessel} (cf.\ vessel V7 in \cite{SDcec_varyingA}), where $H = 1\, \mathrm{m}$ and $B = 4\, \mathrm{m}$.
In this example we use different feed and underflow bulk flows than in the previous example:
\begin{align*}
 \Qf(t) = \begin{cases}
            100\,{\rm m^3/h} & \text{if $ 0\,{\rm h}\leq t < 4 \,{\rm h}$,}  \\
            150\,{\rm m^3/h} & \text{if $ 4\,{\rm h}\leq t < 6 \,{\rm h}$,}\\
            250\,{\rm m^3/h}  & \text{if $ t\geq 6\,{\rm h}$,}
           \end{cases}
\qquad
 \Qu(t) = \begin{cases}
            10\,{\rm m^3/h}  & \text{if $ 0\,{\rm h}\leq t < 4\,{\rm h}$,}\\
            100\,{\rm m^3/h} & \text{if $ 4\,{\rm h}\leq t < 6\,{\rm h}$,} \\
            50\,{\rm m^3/h}  & \text{if $ 6\,{\rm h}\leq t < 9\,{\rm h}$,}\\
            5\,{\rm m^3/h}  & \text{if $ t \geq 9\,{\rm h}$.}
           \end{cases}
\end{align*}
The solids feed concentrations are  given by
\begin{align*}
\bCf(t) = X_{\mathrm{f}}(t) \begin{pmatrix} 
                        5/7\\ 
                        2/7
            \end{pmatrix},\quad\mbox{where}\quad
 X_{\mathrm{f}}(t) = \begin{cases}
            4.0\,{\rm kg/m^3} & \text{if $ 0\,{\rm h} \leq t < 2\,{\rm h}$,} \\
            2.0\,{\rm kg/m^3} & \text{if $ 2\,{\rm h} \leq t < 4\,{\rm h}$,}\\
            5.0\,{\rm kg/m^3} & \text{if $ 4\,{\rm h} \leq t < 7\,{\rm h}$,}\\
            6.0\,{\rm kg/m^3} & \text{if $ t \geq 7\,{\rm h}$.}
           \end{cases} 
\end{align*}
The initial condition for the solids is chosen as the step function
\begin{align*}
\bC_0(z) = \chi_{\{ z \geq 0.5 \}} 
\begin{pmatrix} 
                       20/7\\ 
                       8/7
                       \end{pmatrix}, 
\end{align*}
and for the soluble components we use the same initial condition as in Example~1.
As the simulation in Figure~\ref{fig:simtwo} shows,  the numerical scheme handles the discontinuous cross-sectional area function without any problem.

\begin{figure}[t]
\centering 
\begin{tabular}{cc}
\includegraphics[scale=0.37]{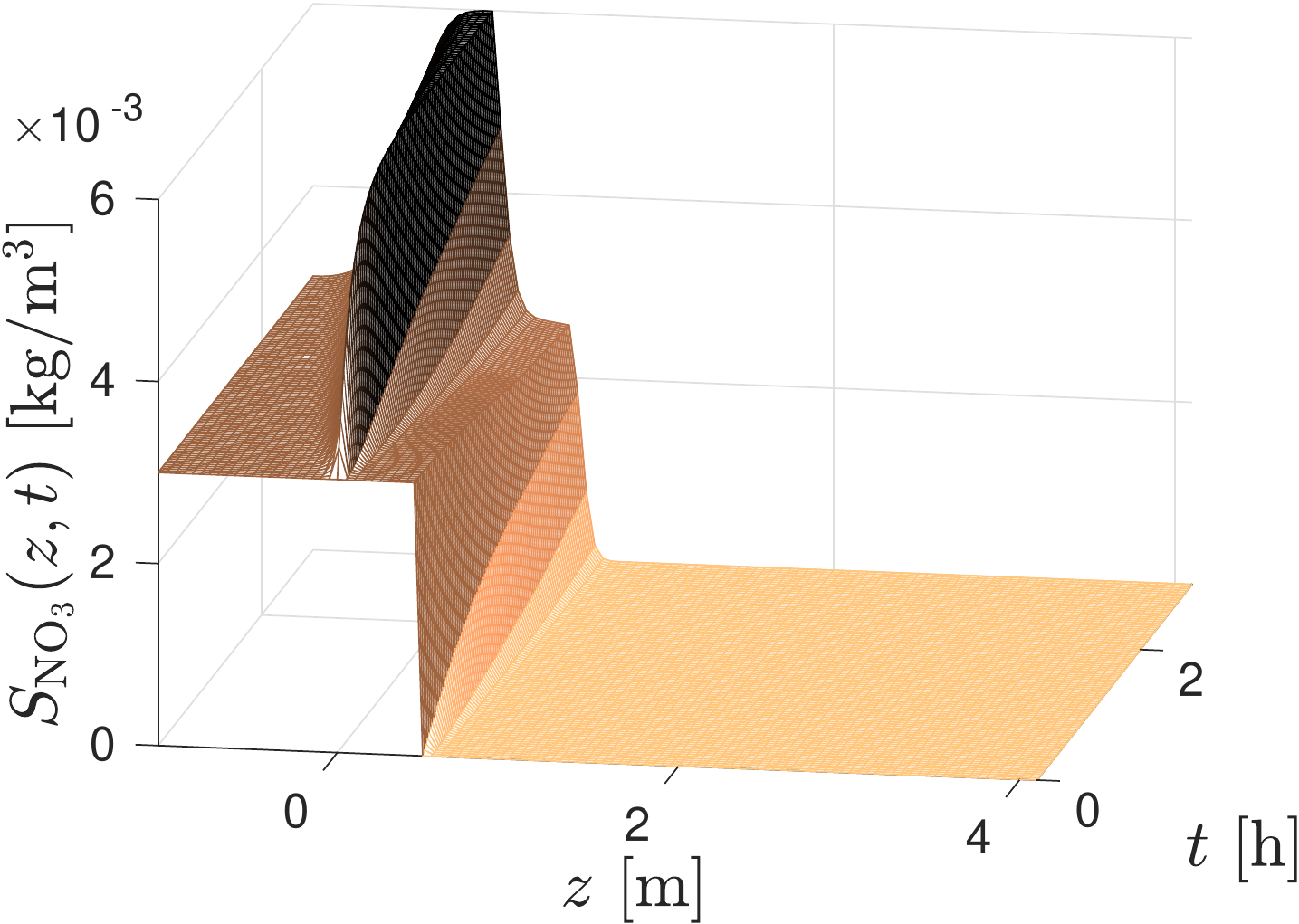} &
\hspace{-1cm}\includegraphics[scale=0.37]{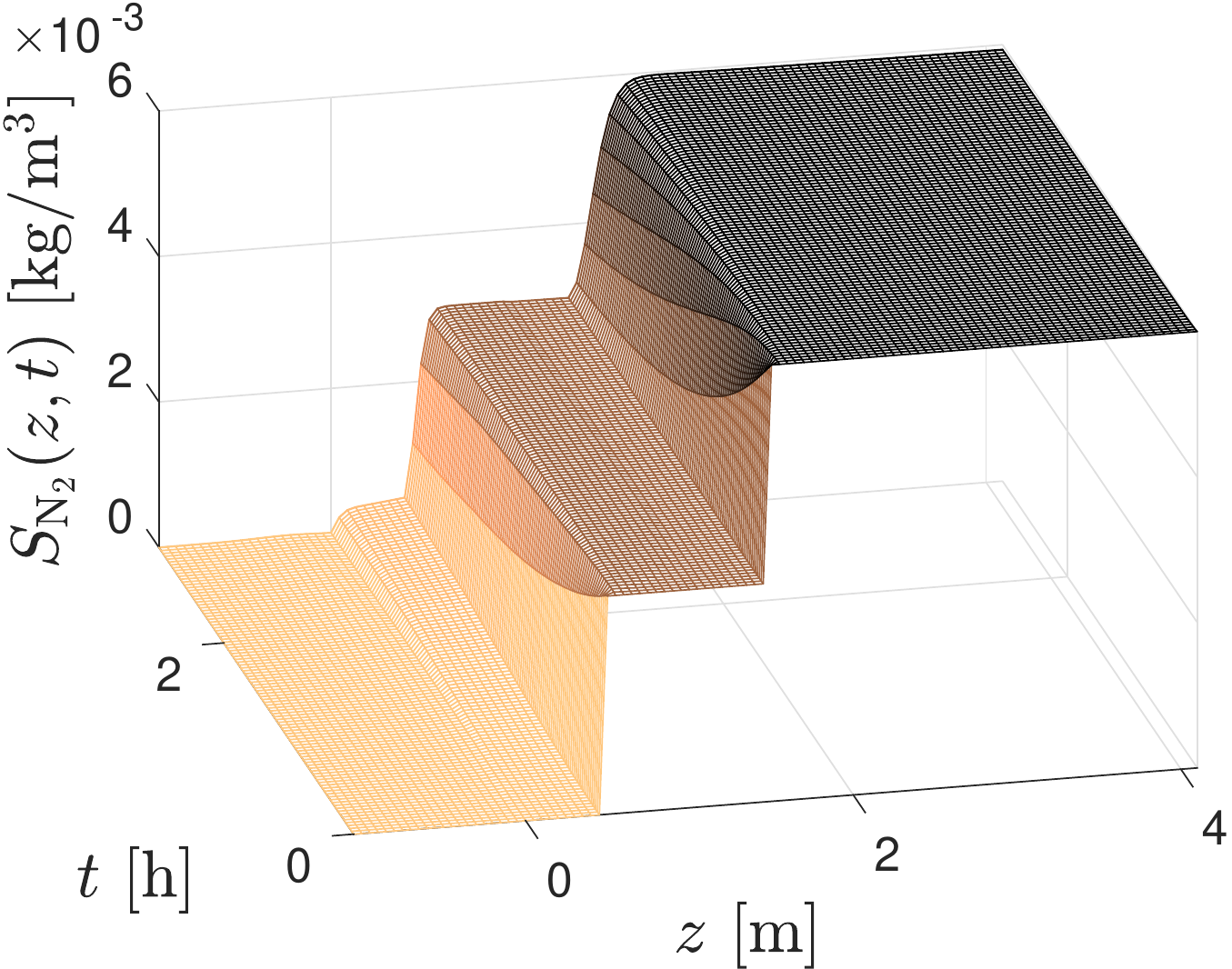} \\
\includegraphics[scale=0.37]{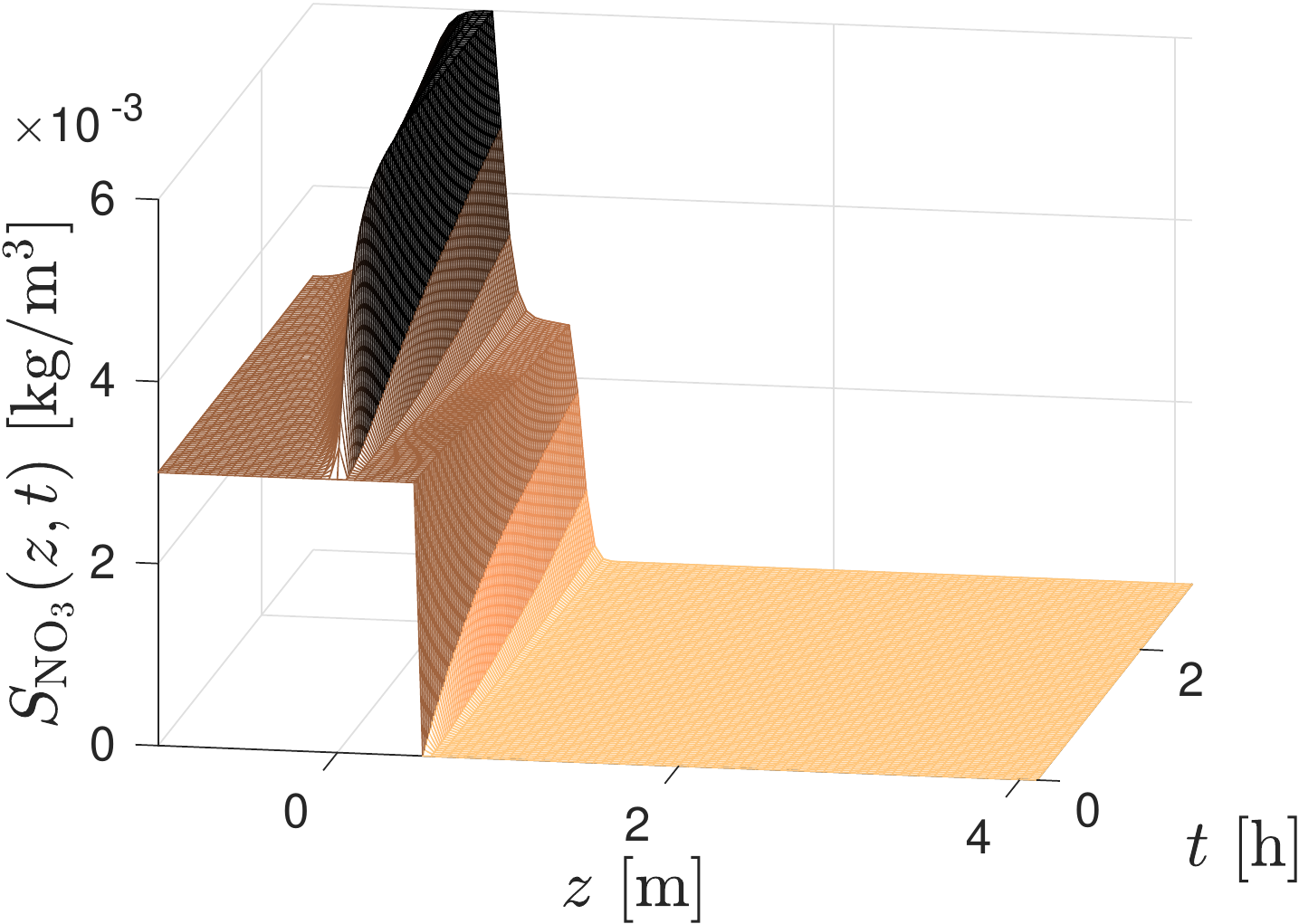} &
\hspace{-1cm}\includegraphics[scale=0.37]{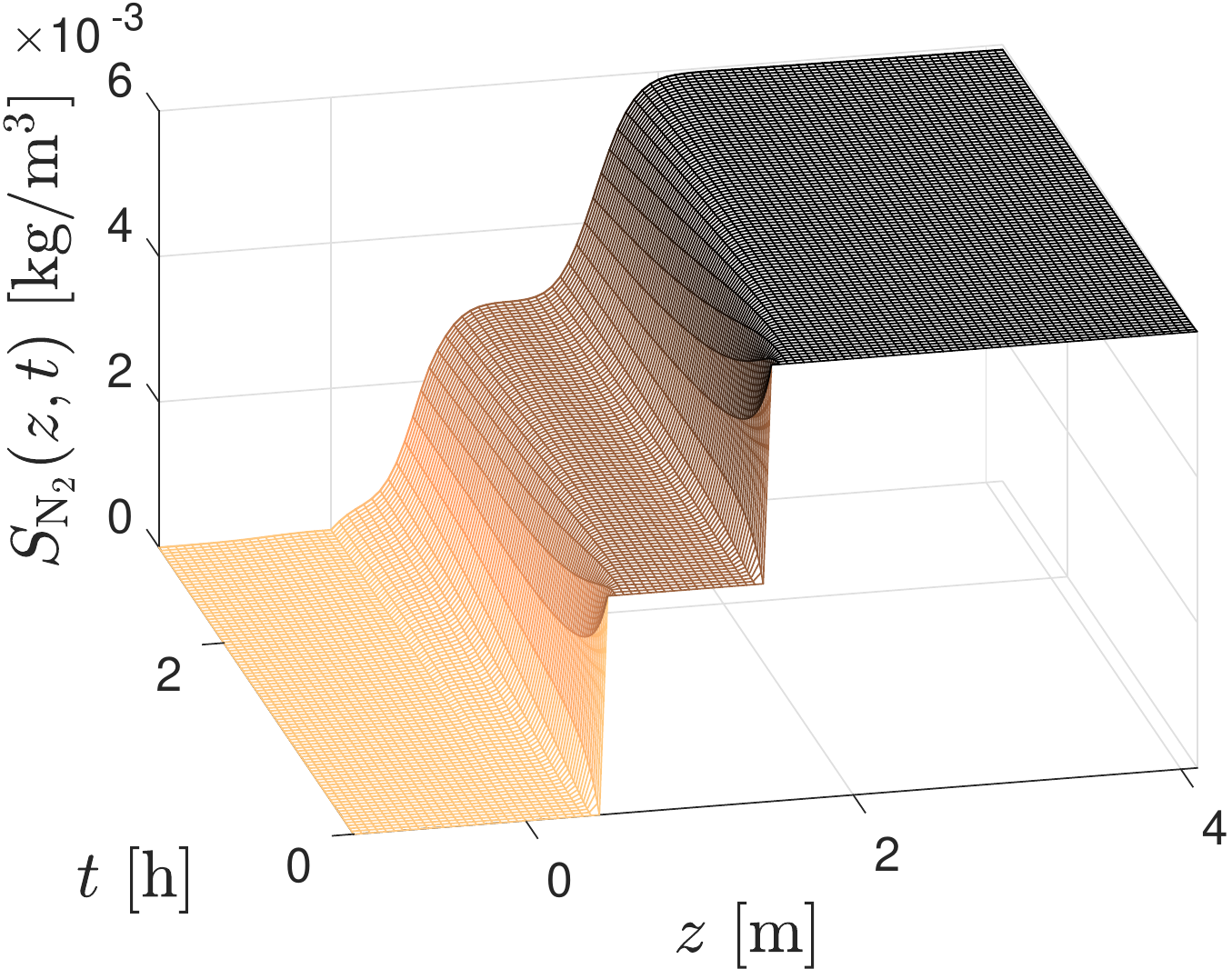} \\
\includegraphics[scale=0.37]{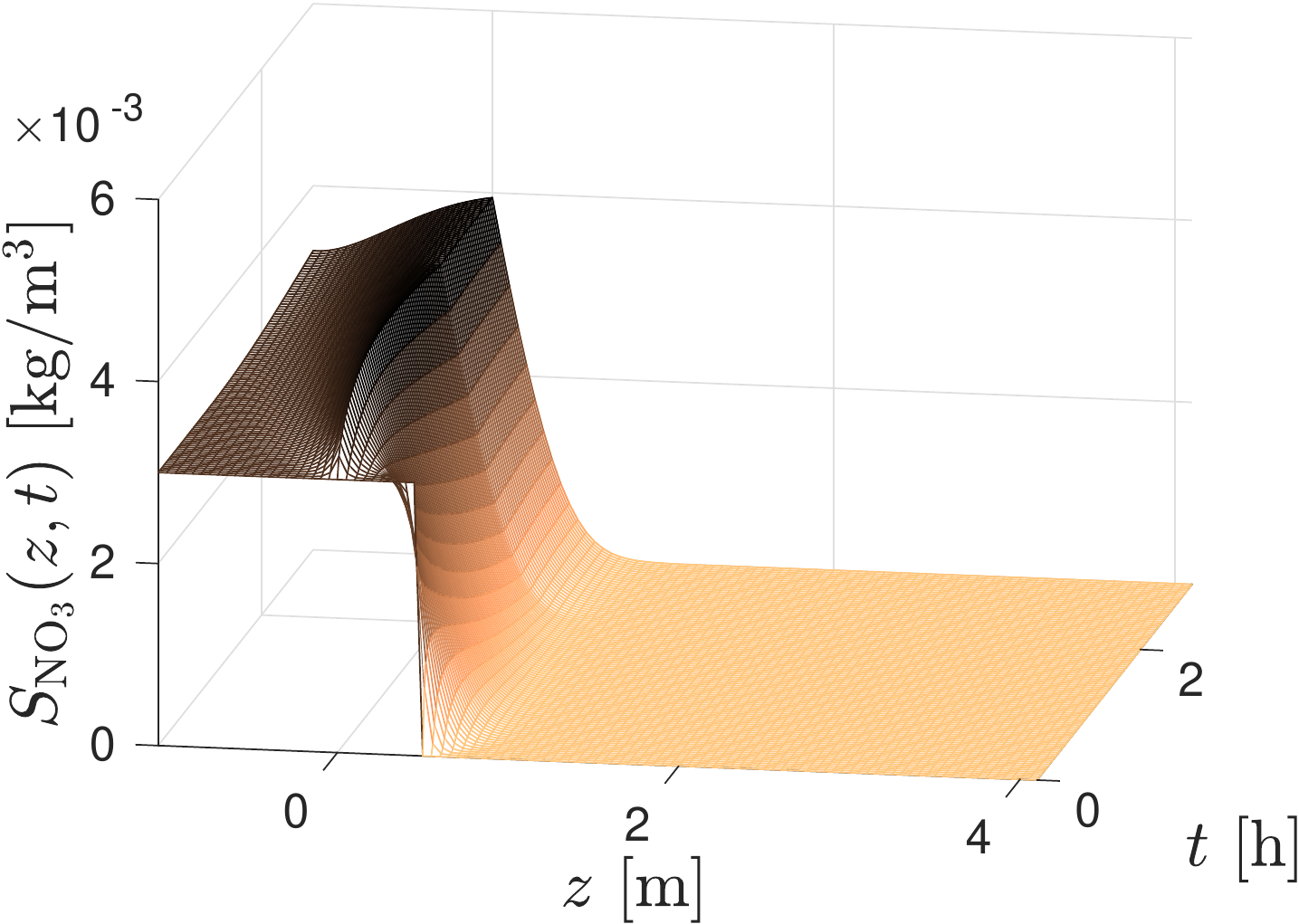} &
\hspace{-1cm}\includegraphics[scale=0.37]{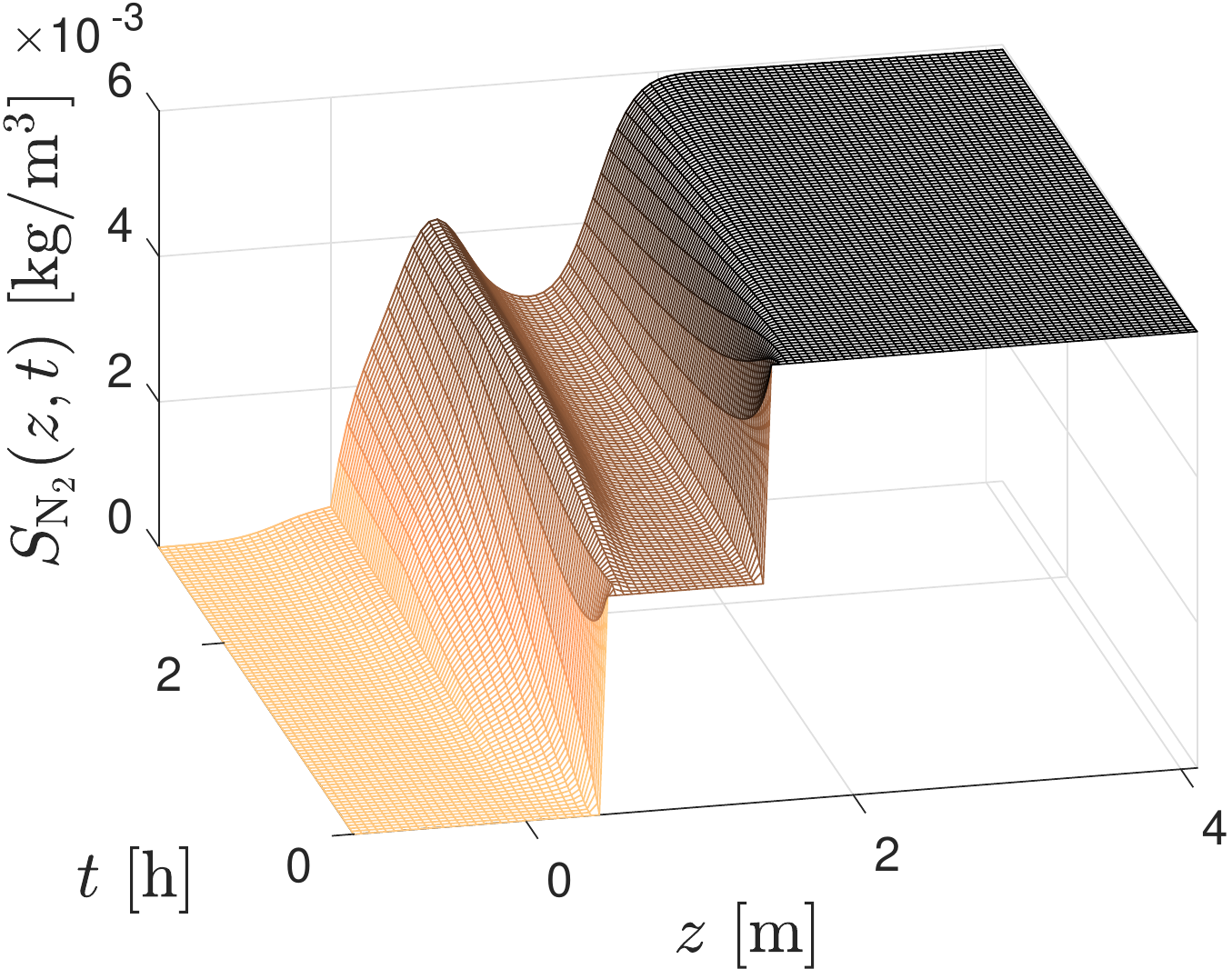}  
\end{tabular} 
\caption{Examples~3--5: The soluble components $S_{\rm NO_3}$ (first column) and $S_{\rm N_2}$ (second column) obtained with $N = 100$ until $T = 3\,\rm h$. First row: no diffusion; second row: diffusion only in $S_{\rm N_2}$; third row: diffusion in all three soluble components.}\label{fig:diff}
\end{figure}%

\subsection{Examples 3--5} \label{subsec:ex3to6} 
In this group of examples we explore the inclusion of the last ingredient of our model and numerical scheme, namely the diffusion terms in the equation  for~$\bS$. We use the same cross-sectional area, bulk flows and feed concentrations for the solid and liquid phases  as in Example~2, also the same initial condition for $\bC$. For the soluble components we consider 
\begin{align*}
S_{\rm NO_3}^0(z) = 0.006\chi_{\{ z \leq 0.5\}}, \quad S_{\rm S}^0(z) =0.12(z-0.5)\chi_{\{ z \geq 0.5\}}, \quad 
 S_{\rm N_2}^0 = \begin{cases}
                     0 & \text{if $ z<0.5\,\rm m$,}  \\
                  0.003& \text{if $ 0.5\,{\rm m}\leq z <1.5\,\rm m$,}\\
                  0.006& \text{if $ z\geq 1.5\,\rm m$.}
                 \end{cases}
\end{align*}
For Example~3 we set all diffusion coefficients to zero, in Example~4 we let $d^{(1)} = d^{(2)} = 0\,\rm m^2/s$ and $d^{(3)} = 3\times 10^{-6}\,\rm m^2/s$, and for Example~5 we have $d^{(1)} = 10^{-5}\,\rm m^2/s$, $d^{(2)} = 5\times 10^{-5}\,\rm m^2/s$ and $d^{(3)} = 3\times 10^{-6}\,\rm m^2/s$. 

Figure~\ref{fig:diff} shows the $S_{\rm NO_3}$ and $S_{\rm N_2}$ components for Examples~3 (first row) to~5 (third row),  where we can observe the  effect of different diffusion coefficients. As expected, the inclusion of diffusion in the third component $S_{\rm N_2}$ (second row, Ex.~4) smoothes out the solution without diffusion (first row, Ex.~3).
Nevertheless, the influence of this diffusion on the other components is not very accentuated. The inclusion of diffusion in all soluble components (third row, Ex.~5) shows the effect of cross diffusion with a wave created near the discontinuity at $z = 0.5\, \mathrm{m}$ in the solution of $S_{\rm N_2}$.

\begin{figure}[t]
\centering 
  \includegraphics[scale=0.5]{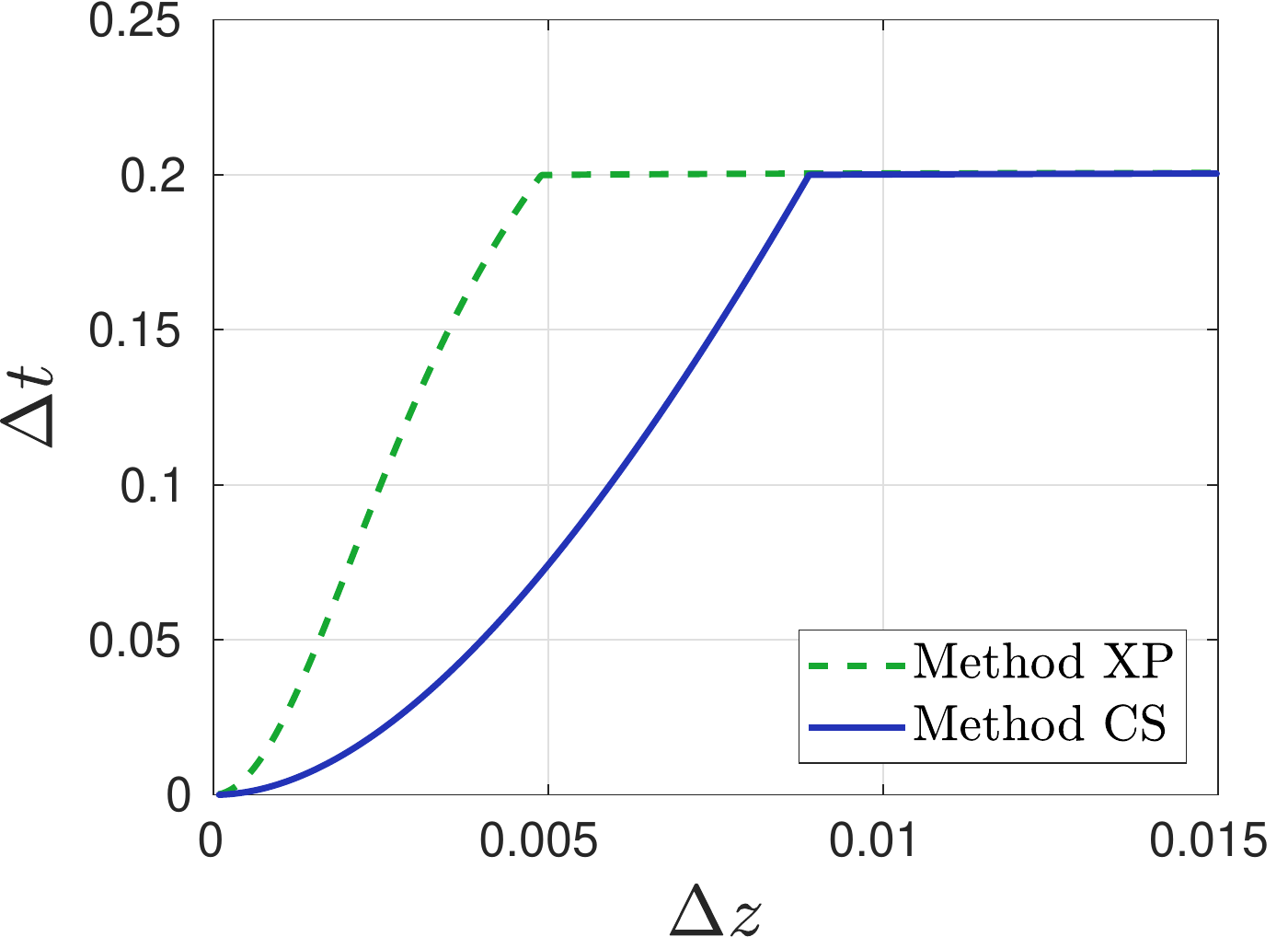}
  \caption{Graphs of the time step $\Delta t$ as function of $\Delta z$ given by the respective CFL conditions of Method~XP (dashed green) and Method~CS (solid blue).
For larger $\Delta z$, the graphs are approximately constant.\label{fig:cfl}}
\end{figure}%
 
\section{Conclusions} \label{sec:conc} 

The main novelty and advantage of the new numerical method (Method~CS) 
  is  its  formulation in method-of-lines (MOL) form. This property  
    makes it possible to implement Method~CS within commercial simulators together with other submodels  
     of WRRFs, which are mostly posed in ODE form. In fact, 
Method~CS only contains   easily implemented explicit formulas, in contrast to previously published methods \citep{SDcace_reactive,SDm2an_reactive} that involve the Godunov numerical flux, which on the other hand is expected to yield slightly more accurate solutions. 
Other advantages of the present model 
 in comparison with 
  previous efforts  \citep{SDcace_reactive,SDm2an_reactive} 
  include  the  incorporation of 
  diffusion or dispersion of each soluble component within the liquid 
   and the variation of the cross-sectional area~$A(z)$ with depth. The cross-sectional area  may even vary   discontinuously,  
    which may be useful for the appropriate  description of  the feed inlet. Thus, the 
     model may handle   realistic rotationally symmetrical shapes of SSTs.

A reformulation of the equivalent (for constant $A$ and without diffusion) model of \cite{SDm2an_reactive} made it possible to derive the MOL equations \eqref{eq:MOL}.
For the fully discrete scheme~\eqref{eq:numscheme}, we have proved an invariant-region property under the condition~\eqref{cfl}; see Theorem~\ref{thm}.
This means positivity of all concentrations and boundedness above of the solids concentrations; however, we have not been able to bound the substrate concentrations from above.

With respect to the numerical  results, we mention that 
Example~1 demonstrates that there is no substantial difference in performance between Method~CS and the previous Method~XP \citep{SDm2an_reactive}, which are both of first order; see Table~\ref{table:error}.
This holds for discretizations that are normally used (roughly,  $\Delta z \geq  0.01\, \mathrm{m}$, which for a tank of height 4\,m corresponds to $N \leq 400$ cells). 
The similar CPU times in Table~\ref{table:error} can be explained by the plot of the respective CFL conditions for the two methods; see Figure~\ref{fig:cfl}.
For small $\Delta z$ that figure reveals the expected parabolic behaviour of $\Delta t$ as a function of $\Delta z$.
For $\Delta z\approx 0.005\, \mathrm{m}$ ($N\approx 800$), Method~XP is the faster one.
The reason for the almost constant values ($\Delta t\approx 0.2\,$s) for large $\Delta z$ is the reaction terms contribution in the CFL conditions. Example~2 shows that the numerical scheme can handle non-constant cross-sectional area functions even having discontinuities. 
Example~3 exhibits the versatility of soluble diffusion effects, which includes cross diffusion between the soluble components.

Future research related to the present model should be conducted in at least three directions.  One of them is related to the well-posedness 
 (existence, uniqueness, and continuous dependence on data of solution) 
   of the underlying mathematical model. Specifically, while the well-posedness of general hyperbolic systems 
    and in particular strongly degenerate parabolic systems is essentially unavailable, an effort should be made to analyze whether 
     the well-posedness of the present model can possibly be reduced to that of a single degenerate parabolic equation for~$X$ 
      plus first-order transport equation for the solid concentrations, akin to the 
       formulation that led to  Method~XP (see the Appendix). In particular, it remains to elaborate  an analytical counterpart, based 
        on PDE theory,  of   the invariant region principle 
       (Lemmas~\ref{lem:Cjbound} to~\ref{lem:Sjbound}) established herein for discrete solutions. 
       
  With respect to numerical schemes, we  mention that Method~CS has been developed  under the aspect of ease of implementation, preference of an MOL formulation,  and satisfaction 
   of a (partial) invariant-region principle. The options of improving the method to make it computationally more efficient have not yet been explored. As a monotone scheme including a first-order time discretization, the method is only first-order accurate and could be upgraded 
    to formal second or higher order accuracy by standard techniques such as monotone upstream centered (MUSCL-type) variable extrapolation  
    or high-order weighted essentially non-oscillatory (WENO) reconstructions in combination, for instance, 
      with strong stability-preserving (SSP) Runge-Kutta time schemes for time integration. All these techniques 
       are treated, for instance, by~\cite{hest18}. Another potential improvement could be to treat certain contributions, 
        for example the discretizations of diffusive terms,  
        in the MOL formulation \eqref{eq:MOL} implicit in time, in the spirit of implicit-explicit (IMEX) schemes for
         time-dependent PDE (see, e.g.,  \cite{bosc15} and references cited in that work). However, such partitioned schemes 
          are not compatible with the  preferred MOL form. In addition,  while these schemes are  devised to achieve 
           a less restrictive  CFL condition (allowing larger time steps), a real gain  in CPU is achieved only for such problems 
             where the strongest time step restriction comes from the discretization of  diffusive terms. 
              However, Figure~\ref{fig:cfl} indicates  that for the present model discretized by Methods~CS or~XP, 
               such gains are likely to accrue for very fine  discretizations only.

       Finally, we comment that it would be very desirable to compare the present model with experimental evidence  and 
        to calibrate the material specific functions, such as $v_{\mathrm{hs}}$ and $ \sigma_{\mathrm{e}}$, properly to make the 
         model usable for prediction, control and simulation of real-world scenarios. However, while  
          data for the  non-reactive model of sedimentation with compression are available (see, e.g., 
           \cite{DeClercq2003,DeClercq2008}) 
            and the reaction kinetics come from standardized models in wastewater treatment \citep{metcalf}, information that combines 
             both ingredients is scarce but includes recent work by \cite{Kirim2019}.

\section*{Acknowledgements} 

RB~is  supported by  CONICYT/PIA/AFB170001; CRHIAM, Proyecto ANID/FONDAP/15130015;   Fon\-de\-cyt project 1170473; and by the INRIA Associated Team ``Efficient numerical schemes for non-local transport phenomena'' (NOLOCO; 2018--2020).
SD~acknowledges support from the Swedish Research Council (Vetenskapsr\aa det, 2019-04601).

\bibliographystyle{apalike}

\section*{Appendix. Method XP}

For easy of reference, we here summarize Method~XP developed by \cite{SDm2an_reactive}. 
We use the same notation as in Section~\ref{sec:scheme} when there is no ambiguity, but also functions and constants defined in Section~\ref{sec:two}. 
Let $j = -1,\dots,N+1$, $\Delta z$ and the nodes $z_j$, $z_{j+1/2}$ taken as in Section~\ref{sec:scheme}.  

The total concentrations are denoted by $X_j$ and $L_j$ in both methods, while in Method~XP we define the percentage vectors $\boldsymbol{P}_{X,j}\in \mathbb{R}^{k_{\bC}}$ and $\boldsymbol{P}_{L,j}\in\mathbb{R}^{k_{\bS}}$ of the subcomponents of the solid and liquid phases in the cell~$j$, respectively.
Note that here the dimension of $\boldsymbol{P}_{L,j}$ is $k_{\bS}$, which means that we do not include the percentage of water.
The concentrations of the subcomponents are then computed by $\bC_j = \boldsymbol{P}_{X,j}X_j$ and $\bS_j = \boldsymbol{P}_{L,j}L_j$.

Method~XP uses Godunov's numerical flux of the unimodal flux function $f(X) := X\vhs(X)$:
\begin{align*}
 \mathcalold{G}_j(X_j,X_{j+1}) & := \min\big\{f(\min\{X_j,\hat{X}\}), f(\max\{X_{j+1},\hat{X}\})\big\},
\end{align*}
and the function
\begin{align*}
 D(X) := \dfrac{\rho_X}{g\Delta \rho} \int_{X_{\rm c}}^X \vhs(s) \sigma'_{\rm e}(s) {\rm d}s.
\end{align*}
For the appoximation of the cell boundary fluxes, we define
\begin{align*}
 \tilde{F}_{X,j+1/2} & := q_{j+1/2}^+X_j + q_{j+1/2}^-X_{j+1} + \gamma_{j+1/2}\mathcalold{G}_j(X_j,X_{j+1/2}) - \gamma_{j+1/2}\left(D(X_{j+1})-D(X_{j})\right)/\Delta z,\\
 \tilde{F}_{L,j+1/2} & := \rho_Lq_{j+1/2}-r\tilde{F}_{X,j+1/2},\\
(\boldsymbol{P}_{E} \tilde{F}_E)_{j+1/2} &:= \tilde{F}_{E,j+1/2}^+\boldsymbol{P}_{E,j} +   \tilde{F}_{E,j+1/2}^- \boldsymbol{P}_{E,j+1},\quad E\in\{X,L\}.
\end{align*}
With
\begin{align*}
\Psi_{X,j}^n &:= \boldsymbol{P}_{X,j}^nX_j^n + \dfrac{\Delta t}{\Delta z}\left(-[\Delta (\boldsymbol{P}_{X}^n\tilde{F}_X^n)]_j + \delta_{j,j_{\rm f}}\boldsymbol{C}_{\rm f}^n q_{\rm f}^n\right) + \Delta t\, \gamma_j \boldsymbol{R}_{\bC,j}^n,\\
\Psi_{L,j}^n &:= \boldsymbol{P}_{L,j}^nL_j^n + \dfrac{\Delta t}{\Delta z}\left(-[\Delta (\boldsymbol{P}_{L}^n\tilde{F}_L^n)]_j + \delta_{j,j_{\rm f}}\boldsymbol{S}_{\rm f}^n q_{\rm f}^n\right) + \Delta t\, \gamma_j \boldsymbol{R}_{\bS,j}^n,
\end{align*}
the marching formulas are given by
\begin{align*}
 X_j^{n+1} &= X_j^n + \dfrac{\Delta t}{\Delta z}\left(-[\Delta \tilde{F}_X^n]_j + \delta_{j,j_{\rm f}}X_{\rm f}^n q_{\rm f}^n\right) + \Delta t \gamma_j\tilde{R}_{C,j}^n,\\
 \boldsymbol{P}_{X,j}^{n+1} &= \begin{cases}
                               \boldsymbol{P}_{X,j}^n &\mbox{if } X_{j}^{n+1}=0,\\
                               \Psi_{X,j}^n/X_j^{n+1} & \mbox{if } X_{j}^{n+1}>0,
                              \end{cases}\\
 L_j^{n+1} &  = \rho_L - rX_j^{n+1},\\
 \boldsymbol{P}_{L,j}^{n+1}& = \Psi_{L,j}^n/L_j^{n+1}.
\end{align*}
The CFL condition is given by
\begin{align*}
 \Delta t \left( \dfrac{\lVert q \rVert_{\infty}}{\Delta z} + \max\{\beta_X,\beta_{\boldsymbol{P}_X},\beta_{\boldsymbol{P}_L}\} \right)\leq 1,
\end{align*}
where
\begin{align*}
 \beta_X:= \dfrac{\lVert f' \rVert_{\infty}}{\Delta z} + \dfrac{\lVert D' \rVert_{\infty}}{\Delta z^2} + \tilde{M}_{\bC} + r\tilde{M}_{\bS}, \quad 
 \beta_{\boldsymbol{P}_X} := \beta_X-(\tilde{M}_{\bC} + r\tilde{M}_{\bS})+M_{\bC},\\
 \beta_{\boldsymbol{P}_L} := \left(\dfrac{\lVert f \rVert_{\infty}}{\Delta z} + \dfrac{D(\Xmax)}{\Delta z^2}\right)/(\rho_X-\Xmax) + M_{\bC}, \quad \tilde{M}_{\bS} := \sup_{\boldsymbol{\mathcal{U}}\in\Omega\atop 1\le k\le k_{\bS}}
\left|\pp{\tilde{R}_{\bC}^{(k)}}{S^{(k)}}\right|.
\end{align*}
The norm and constants presented here are defined in Subsection~\ref{subsec:scheme}.
\end{document}